\newcommand{\R}{\mathbb R}
\newcommand{\N}{\mathbb N}
\newcommand{\Z}{\mathbb Z}
\newcommand{\C}{\mathbb C}
\newcommand{\Q}{\mathbb Q}
\newcommand{\floor}[1]{{\left\lfloor #1 \right\rfloor}}
\newcommand{\nc}{\newcommand}
\nc{\BCc}{{\mathbb{C}(\wp(z),\wp^\prime(z))}}
\nc{\BC}{{\mathbb C}}
\nc{\BQ}{{\mathbb Q}}
\nc{\BR}{{\mathbb R}}
\nc{\BZ}{{\mathbb Z}}
\nc{\BP}{{\mathbb P}}
\nc{\BN}{{\mathbb N}}
\nc{\BM}{{\mathbb M}}
\nc{\fH}{{\mathfrak{H}}}
\nc{\vp}{{\varepsilon}}\nc{\dpar}{{\partial}}\nc{\al}{{\alpha}}
\nc{\PSL}{PSL(2,\BR)}
\nc{\PS}{PSL(2,\BZ)}
 \nc{\CL}{PSL(2,\BZ/m\BZ)}
 \newtheorem{theorem}{Theorem}[section]
\newtheorem{corollary}[theorem]{Corollary}
\newtheorem{lemma}[theorem]{Lemma}
\newtheorem{proposition}[theorem]{Proposition}
\theoremstyle{definition}
\newtheorem{definition}[theorem]{Definition}
\newtheorem{remark}[theorem]{Remark}
\newtheorem{example}[theorem]{Example}
\DeclareMathOperator{\lcm}{lcm}
\newcommand\xleftrightarrow[2][]{%
  \ext@arrow 9999{\longleftrightarrowfill@}{#1}{#2}}
\newcommand\longleftrightarrowfill@{%
  \arrowfill@\leftarrow\relbar\rightarrow}
\numberwithin{equation}{section}  
\begin{document}
\title{ Harmonic Analysis of some arithmetical functions}
\author{Ahmed Sebbar \and Roger Gay}
\address{Ahmed Sebbar \\ Chapman University, One University Drive, Orange, CA 92866, USA \\
Univ. Bordeaux, IMB 
   \\UMR 5251, F-33405 Talence, France.}
   \email{sebbar@chapman.edu, ahmed.sebbar@math.u-bordeaux.fr}
   \address{ Roger Gay\\
Univ. Bordeaux, IMB 
   \\UMR 5251, F-33405 Talence, France.}
   
      \email{roger.gay@math.cnrs.fr}
   \keywords{Arithmetical functions, Franel integral, Riesz basis, Smith determinant }
\subjclass[2010]{11M35, 11M41, 43A50}   
\begin{abstract}
We study three functions which are power series in the variable $z$, Dirichlet series in the variable $s$ and with coefficients given by arithmetical functions. A strong point is to relate these functions to some Hilbert spaces. Three main
ingredients are used: an estimate of Davenport on sums of M\"obius functions, a result of Lucht on convolutions of arithmetical Dirichlet series and the introduction of an operation $\otimes$ on power series, naturally associated with the mentioned Hilbert spaces.

\end{abstract}
\maketitle 
\section { Inroduction}

Several formal trigonometrical expansions of the Analytic Number Theory are of Harmonic Analysis nature. For instance they are periodic or almost periodic Fourier series of their sums. The main goal of the present paper is to prove a corresponding result for three arithmetical functions called 
$   {\mathscr L}_s,\,  {\mathscr M}_s,\,  {\mathscr C}_s   $. The first is the classical polylogarithm function, the second is built from the M\"{o}bius function $\mu(n)$ and the third from the Ramanujan sums. The most saliant results of the paper can be  summarized as follows. We will study some possible links between $   {\mathscr L}_s,\,  {\mathscr M}_s,\,  {\mathscr C}_s              $ by using a theorem of Lucht \cite{Lucht}. For some arithmetical functions, as
 for example $\displaystyle \frac{\sigma_s(n)}{n^s},\, \sigma_s(n)=\displaystyle \sum_{d\vert n}d^s,\, \Re s>0  $ we study the existence of 
 Ramanujan expansion and give  the Ramanujan coefficients. The third objective is to look, from what is called Kubert's identities point of view to the problem, first  solved by Besicovitch \cite{Besico}, of giving an example of a non-trivial real continuous function  $f $ on  $[0, 1]$ which is not odd with respect to the point  $\frac{1}{2}$ and which has the property that for every positive integer $k$
  \[ \sum_{h=0}^k\ f(\frac{h}{k})= 0.\]
 Bateman and Chowla \cite{BC}, \cite{Chowla} gave the two explicit examples of such functions
\begin{align*}
f_1(t)&= \sum_{n=1}^{\infty} \frac{\mu(n)}{n} \cos (2\pi n t)\\
f_2(t)&= \sum_{n=1}^{\infty} \frac{\lambda(n)}{n} \cos (2\pi n t)
\end{align*}
where $\mu$ is the M\"{o}bius function and $\lambda$ is the Liouville's function $\lambda$, defined by $\lambda(1)=1$ and $\lambda(n)= (-1)^j$ if $n$ is the product of $j$ (not necessarily distinct) prime numbers. It is a multiplicative function, closely related to the M\"{o}bius $\mu$
function for they coincide on square-free integers. These two functions share many properties, as we will see in the last section.

We introduce some Hilbert spaces and build Riesz basis from the function $   {\mathscr L}_s$  and determine an biorthogonal basis. The characterizations of the Riesz bases highlight some Dirichlet series as well as some extension of the famous Smith determinant.
We illustrate the Fourier Analysis aspect through the Ramanujan series and their use in the development of arithmetical functions.
The last section briefly presents an opening towards dynamic systems, to emphasize that the path inaugurated by Aurel Wintner, Norbert Wiener and Marc Kac may experience a revival in dynamical systems, as in the conjectures of Chowla and Sarnak.
\section{Arithmetical functions}
Lambert series are, by definition, series of the form
\[ \sum_{n=1}^{\infty}\ a_n\frac{x^n}{1-x^n},\quad a_n \in \BC.\]
They were considered in connection with the convergence of power series. If a series $\displaystyle \sum_{n=1}^{\infty} a_n  $ converges, then the Lambert series converges for all $x\neq \pm 1$. Otherwise it converges for those values of $x$ for which the series $\displaystyle \sum_{n=1}^{\infty} a_n x^n$ converges \cite{Spira}, and the references therein.
In all that follows, it would be of some interest to highlight three equivalences which will be used more or less explicitly in this paper.  We have formally the following diagram  where $f$ and $g$ are two arithmetical functions 
 \begin{equation*}
 f(n)= \sum_{d\vert n}g(d) \iff \sum_{n=1}^{\infty} \frac{f(n)}{n^s}= \zeta(s) \sum_{n=1}^{\infty} \frac{g(n)}{n^s} \iff
 \sum_{n=1}^{\infty} f(n) x^n= \sum_{n=1}^{\infty}g(n) \frac{x^n}{1-x^n}.
 \end{equation*}
 This is exactly the essence of our work: We are constantly moving between three aspects: Arithmetical convolution, Dirichlet series and power series. This is done through  the Riemann zeta function or its inverse.
To illustrate this, we give some examples, some of which will be used and all the definitions will be given,
\begin{enumerate}
\item If $g(n)= \mu(n) $, the M\"{o}bius function, then 
\[  \sum_{n=1}^{\infty}\mu(n) \frac{x^n}{1-x^n}= x.\]
\item If $g(n)= \lambda(n) $, the Liouville function, the associated Lambert series is the Jacobi theta function
\[  \sum_{n=1}^{\infty}\lambda(n) \frac{x^n}{1-x^n}= x+x^4+x^9+x^{16}+\cdots.\]
\item If $\Phi(n)$ is Euler's totient function, then  for $\vert x\vert  $<1
\[\sum_{n=1}^{\infty} \Phi(n)\frac{x^n}{1-x^n}=  \frac{x}{(1-x)^2}. \]
\item If $\displaystyle G_1(x)=  \sum_{n=1}^{\infty}g(n) \frac{x^n}{1-x^n}$ and $ \displaystyle G_2(x)=  \sum_{n=1}^{\infty}g(n) x^n$, then
\[G_1(x)=  \sum_{n=1}^{\infty} G_2(x^n). \]
\end{enumerate}
When $g(n)$ is a known arithmetical function, like $\mu(n)$ or $\lambda(n)$ or $\Phi(n)$,  the previous relations reflect deep arithmetical identities. On the other hand some
elementary functions $g(n)$ can produce non trivial sums. For example if $\displaystyle g(n)= \frac{1}{n} $ and $\displaystyle G_1(x)=  \sum_{n=1}^{\infty}\frac{1}{n}  \frac{x^n}{1-x^n}$,  then 
\[ e^{-G_1(x)}= \prod_{n=1}^{\infty} (1-x^n),\]
a well known function in the theory of partitions.

The notion of Kubert's identity is important for us, before defining it we introduce a fundamental function 
\begin{equation}\label{kubert}
\{t \}= \left\{ \begin{matrix}
t-\floor{t}-\frac{1}{2}& {\rm if}\quad  t\neq \floor{t}\\
\\
0& {\rm if}\quad  t= \floor{t}
\end{matrix}
\right.
\end{equation}
  admitting the Fourier expansion
\[\{t\}= -\frac{1}{\pi} \sum_{n=1}^{\infty} \frac{\sin(2\pi mt}{m},       \]
which extends into a formal summation expansion 
\begin{equation*}
\sum_{n=1}^{\infty} \frac{a_n}{n}\{n t\}= -\frac{1}{\pi} \sum_{n=1}^{\infty} \frac{A_n}{n}\sin (2\pi n t),
\end{equation*}
where
$\displaystyle A_n= \sum_{d\mid n} a_d.$
  This reveals a property of the sequence $(\{nt\})_{n\geq 1} $, closely related to the main concern of this paper. We have the well known result 
\begin{equation}\label{Chowla3}
\int_0^1\left(\{rt \} \{ st\}   \right)\,dt= \frac{\gcd(r,s)}{12 \lcm(r,s)}= \frac{\gcd(r,s)^2}{12 rs}.
\end{equation}
Another example that we will meet  is the expansion, $t\notin 2\pi \Z        $
\begin{equation}\label{kubert2}
\log\left(2\left \vert \sin \frac{t}{2} \right \vert\right)= -\sum_{n=1}^{\infty} \frac{\cos nt}{n}
\end{equation}
which leads to the formal identity, for irrational $t$
\[ \sum_{n=1}^{\infty} \frac{c_n \log (2\vert \sin n\pi t\vert))}{n}= -\sum_{n=1}^{\infty}  \frac{G_n \cos (2n\pi t)}{n},                      \]
where again  $ \displaystyle G_n= \sum_{d\vert n} c_d      $. The validity  of this equality has been discussed by Davenport in \cite{D} and also  by Chowla in \cite{Chowla2}, who observed that
\begin{equation}\label{Chowla33}
\int_0^1 \log\left(2\vert \sin r\pi t  \vert\right)       \log\left(2 \vert \sin s\pi t  \vert\right)\,dt= \frac{\pi^2}{12}\, \frac{\gcd(r,s)^2}{rs}.
\end{equation}
The formulas \eqref{Chowla3}  and \eqref{Chowla33} are named Franel integrals in \cite{SebGay}.
Beside Number Theory, the functions \eqref{kubert} and  \eqref{kubert2} appear in Fourier and Harmonic Analysis where \eqref{Chowla3} and \eqref{Chowla33} find an interpretation. To give the mean idea we cite the following fact:  the sequence of functions
\[1, \{t\}, \{2t\}, \cdots \{nt\},\cdots  \]
  is a basis for the Hilbert space  $ (L^2([0, \frac{1}{2}), dt)$,  $dt$  being the Lebesgue measure. This kind of results, with very interesting connections with questions in Number Theory, appeared in \cite{Wintner}  and \cite{Hartman}.\\  
  Another point of view, which we only briefly evoke here and also  in the section \eqref{lastsection}, is the following: We fix a positive integer $p$ and define on the unit interval the $p$-Bernoulli map, an extension of \eqref{kubert}, the function
\[\psi_p(x)= px-\floor{px}, \quad \{x \}_p = \psi_p(x)- \frac{1}{2}\]
which admits the Fourier series expansion
\begin{equation}\label{Fourier}
 \{x \}_p= -\sum \frac{2\cos(2n\pi x-\frac{1}{2}p\pi)}{(2n\pi)^p}.
 \end{equation}
We look at $\psi_p(x) $ as a one-dynamical system on the space $(0, 1) $, as in \cite{Hata}. The associated  Perron-Frobenius operator $P_{\psi_p}$ is defined by
\[\left(P_{\psi_p}u \right)(x)= \sum_{y\in \psi^{-1}(x)} \frac{u(y)}{\vert \psi'(y)\vert}= \frac{1}{p} \left\{u(\frac{x}{p})+u(\frac{x+1}{p})+\cdots+ u(\frac{x+p-1}{p}) \right\}.           \]
If $u$ is an eigenvector of $P_{\psi_p} $, associated to the eigenvalue $\lambda$, then
 \begin{equation} \label{kubert1}
\lambda pu(px)= u(x)+u(x+\frac{1}{p})+\cdots+ u(x+\frac{p-1}{p}).      
\end{equation}
We see that if, for example, $\lambda = 1$, the eigenfunctions satisfy certain functional equations, similar to those satisfied by the function $\log \Gamma$.
We give few  fundamental examples: 
\begin{enumerate}
\item Bernoulli polynomials  are given by
\[\frac{te^{tx}}{e^t-1}= \sum_{n=0}^{\infty} B_n(x) t^n,\quad B_0(x)= 1,\;\;B_1(x)= x-\frac{1}{2},\;\;  B_2(x)= x^2-x+\frac{1}{6}, \cdots \]
and they satisfy
\[\sum_{n=0} ^{\infty}\left(B_n(x)+\cdots+B_n(x+\frac{k-1}{k} \right)t^n= k \sum_{n=0} ^{\infty} (\frac{t}{k})^nB_n(kx).              \]
So  the eigenvalues are $\lambda= k^{-n}$.\\
\item Hurwitz zeta function, defined for $\Re s>1  $ by
\[\zeta(s, x)= \sum_{n=0} ^{\infty} \frac{1}{(x+n)^s} \]
for which we have
\[\zeta(x, s)+\cdots+\zeta(x+\frac{k-1}{k}, s)= k^s\zeta(kx, s) \]
and the eigenvalues are  $\lambda= k^{s-1}$. It satisfies for $ \Re s>\frac{1}{2} $ the Franel type integral  \cite{Mikolas}
\[\int_0^1 \zeta(\{ax\}, 1-s)  \zeta(\{bx\}, 1-s) \,   dx = \frac{2\Gamma^2(s) \zeta(2s)}{(2\pi)^{2s}}  \left(\frac{\gcd(a,b)}{\lcm(a,b)} \right)^s .    \]
The integral diverges for $ \Re s\leq \frac{1}{2} $.
\item The polylogarithm function defined (for $ \vert z\vert<1, \Re s\geq 1$ or  $ \vert z\vert \leq 1, \Re s> 1$) by 
\[  {\mathscr L}_s(z)= \sum_{ n \geq 1} \frac{z^k}{k^s}.  \]
\end{enumerate}
For $s=k$ an integer the polylogarithm function is related to the Bernoulli polynomial $B_k(X)$ by
\[B_k(\floor \theta) = -\sum_{n\neq 0} \frac{e^{2i \pi n \theta}}{n^k},\]
which is just \eqref{Fourier} when $k= p=1$. In order to study their relation to the Perron-Frobenius operator we introduce a new notion.
\begin{definition}
 According to Kubert \cite{Kubert}, \cite{Milnor}  we say that a function $f(x)$, where $x$ varies over $\Q/\R $ or $\R/\Z$, satisfies a Kubert identity if it verifies 
 the functional equations
\begin{equation*}
f(x)= m^{s-1} \sum_{k=0}^{m-1} \left(\frac{x+k}{m}\right)\quad \quad \quad \quad \quad \quad \quad \quad\quad \quad(\star_s)
\end{equation*}
for every positive integer $m$. Here $s$ is some fixed parameter.
\end{definition}
It is apparent that this definition is more restrictive than the one given for eigenfunctions of the Perron-Frobenius operator. The derivative of a differentiable function satisfying $ (\star_s)$ satisfies  $ (\star_{s-1})$. A very instructive example is given by the following example: From the polynomial relation
\[X^n-1= \prod_{\eta^n=1}(\eta X-1) \]
we deduce that \[e^{2i\pi nx} -1=  \prod_{k= 0}^{n-1}(e^{2i\pi(x+\frac{k}{n})} -1),\quad x\in \Q/\Z,\; x\neq 0\]
so that if
$f(x):= \log \vert e^{2i\pi  x} -1\vert $,  then
\begin{equation}\label{smallexample}
f(nx) = \sum_{k=0}^{n-1}  \log \vert 2\sin \pi (x+\frac{k}{n})  \vert=  \sum_{k=0}^{n-1}  f(x+\frac{k}{n}). 
\end{equation}
This property of the function $\displaystyle f(x):= \log \vert e^{2i\pi  x} -1\vert $ is connected with Franel type equality \eqref{Chowla33}.\\
Let ${\mathscr K}_s,\,s\in \C$, be the complex vector space of all continuous maps $f: (0, 1) \to \C  $ which satisfy the Kubert identity $ (\star_s)$ for every positive integer $m$ and every $x$ in $(0,1)$.  It is easy  to see directly  that  the function ${\mathscr L}_s(z)$ satisfies the relation ($\star_s$). More precisely
 (\cite{Milnor}, p.287)
\begin{theorem} The complex vector space ${\mathscr K}_s$ has dimension 2, spanned by one even element ($f(x)= f(1-x)$) and one odd element ($f(x)= -f(1-x)$). Each $f(x)\in {\mathscr K}_s $ is real analytic.
\end{theorem}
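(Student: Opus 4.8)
The plan is to determine $\mathscr{K}_s$ exactly by a Fourier-coefficient argument, after exhibiting two explicit members and splitting into parity classes. For the explicit members, evaluate the polylogarithm — analytically continued, hence holomorphic on $\C\setminus[1,\infty)$ — on the unit circle: set $p_s(x)=\mathscr{L}_s(e^{2\pi i x})$ and $q_s(x)=\mathscr{L}_s(e^{-2\pi i x})$ for $x\in(0,1)$. Orthogonality of the additive characters of $\Z/m\Z$ gives
\[
\sum_{k=0}^{m-1}\mathscr{L}_s\bigl(e^{2\pi i (x+k)/m}\bigr)=\sum_{n\ge1}\frac{e^{2\pi i nx/m}}{n^{s}}\sum_{k=0}^{m-1}e^{2\pi i nk/m}=m\sum_{j\ge1}\frac{e^{2\pi i jx}}{(mj)^{s}}=m^{1-s}\,\mathscr{L}_s\bigl(e^{2\pi i x}\bigr),
\]
which is exactly $(\star_s)$ for $p_s$, and likewise for $q_s$; hence $L_s^{+}=p_s+q_s$ (even about $\tfrac12$) and $L_s^{-}=p_s-q_s$ (odd) lie in $\mathscr{K}_s$, and both are real analytic on $(0,1)$, being $\mathscr{L}_s$ composed with the real-analytic map $x\mapsto e^{\pm 2\pi i x}$ whose values avoid $[1,\infty)$. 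Away from the negative integers $L_s^{\pm}$ are both nonzero, hence linearly independent (of opposite parity, so a relation $aL_s^{+}+bL_s^{-}=0$ forces $a=b=0$); at a negative integer $s$ the functions $p_s$ and $q_s$ are proportional (Jonquière's reflection formula, since $1/\Gamma(s)=0$) so one of $L_s^{\pm}$ vanishes and is replaced by a Hurwitz-zeta solution $x\mapsto\zeta(1-s,x)\pm\zeta(1-s,1-x)$ of the missing parity — real analytic on $(0,1)$ and satisfying $(\star_s)$ because $\sum_{k=0}^{m-1}\zeta\bigl(1-s,\tfrac{x+k}{m}\bigr)=m^{1-s}\zeta(1-s,x)$. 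In every case $\mathscr{K}_s$ contains a nonzero even element and a nonzero odd element, so $\dim\mathscr{K}_s\ge2$.

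For the reverse inequality, observe that the involution $\iota f(x)=f(1-x)$ commutes with every averaging operator $T_m f(x)=m^{s-1}\sum_{k=0}^{m-1}f\bigl(\tfrac{x+k}{m}\bigr)$, since $1-\tfrac{x+k}{m}=\tfrac{(1-x)+(m-1-k)}{m}$; thus $\iota$ preserves $\mathscr{K}_s$, and it is enough to bound each of its two eigenspaces by dimension one. Suppose provisionally that $f\in\mathscr{K}_s$ has a Fourier expansion $f=\sum_{n\in\Z}\widehat f(n)\,e^{2\pi i nx}$. Substituting this into $(\star_s)$ and invoking the same character orthogonality forces $\widehat f(m\ell)=m^{-s}\,\widehat f(\ell)$ for all $m\ge1$ and $\ell\in\Z$; hence $\widehat f(0)=0$ when $s\ne0$, $\widehat f(n)=\widehat f(1)\,n^{-s}$ and $\widehat f(-n)=\widehat f(-1)\,n^{-s}$ for $n\ge1$, that is $f=\widehat f(1)\,p_s+\widehat f(-1)\,q_s$. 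This gives $\dim\mathscr{K}_s\le2$, hence equality with the stated even/odd basis; and since every $f\in\mathscr{K}_s$ then lies in the span of the real-analytic functions $L_s^{\pm}$, every such $f$ is real analytic, which is the last assertion. (At the negative integers the coefficient recursion is unchanged; one only verifies that the supplementary Hurwitz-zeta solution also lies in this two-dimensional span, which is precisely the functional equation of $\zeta(1-s,\cdot)$.)

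The one genuinely non-formal point — and the step I expect to be the main obstacle — is licensing the Fourier expansion of an \emph{arbitrary} continuous $f\in\mathscr{K}_s$: such an $f$ is only given on the open interval and can be unbounded, even non-integrable, near $0$ and $1$ (for instance the $\cot\pi x$-type solution at $s=0$, or $\zeta(1-s,x)\sim x^{s-1}$ when $\Re s\le0$). I would address this by first wringing a growth bound out of $(\star_s)$ itself: applied with large $m$ it writes $f(x)$ as an $m^{s-1}$-weighted mean of the $m$ equally spaced values $f\bigl(\tfrac{x+k}{m}\bigr)$, the bulk of which lie in a fixed compact subinterval of $(0,1)$ where $f$ is bounded, and balancing $m$ against $\operatorname{dist}(x,\{0,1\})$ — iterating the estimate if necessary — should yield $|f(x)|=O\!\left(\operatorname{dist}(x,\{0,1\})^{-\rho}\right)$ for some $\rho=\rho(\Re s)$. (As a sanity anchor, when $\Re s>1$ the series $p_s,q_s$ converge absolutely on the closed circle and no such work is needed.) With a polynomial growth bound secured, $f$ defines a periodic distribution, $(\star_s)$ holds distributionally, the coefficient recursion $\widehat f(m\ell)=m^{-s}\widehat f(\ell)$ is valid, and the resulting series is literally the polylogarithm pair — an honest function agreeing with $f$ on a dense set, hence everywhere by continuity. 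I expect this growth estimate, together with the case-by-case treatment of the negative-integer values of $s$, to absorb nearly all of the real work, the rest being the elementary computation displayed above.
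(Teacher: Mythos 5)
The paper does not actually prove this statement: it is quoted from Milnor (\cite{Milnor}, p.~287) and used as a black box, so there is no in-paper argument to measure yours against — only Milnor's original. Judged on its own, your outline follows the same basic strategy (explicit polylogarithm/Hurwitz-zeta solutions for the lower bound, the forced Fourier recursion $\widehat f(m\ell)=m^{-s}\widehat f(\ell)$ for the upper bound), and the formal parts — the character-orthogonality computation, the parity decomposition, the degeneration of $p_s\pm q_s$ at non-positive integer $s$ — are correct.

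The gap is exactly where you locate it, and your proposed repair does not yet close it. First, the growth bound: for large $m$ the points $\tfrac{x+k}{m}$, $0\le k\le m-1$, are equally spaced with gap $\tfrac1m$, so they do \emph{not} lie in a fixed compact subinterval of $(0,1)$ — the extreme ones sit within $\tfrac1m$ of $0$ and of $1$, and a single application of $(\star_s)$ with large $m$ bounds $f$ near the boundary by values of $f$ that are again near the boundary. What does close is the dyadic iteration: $(\star_s)$ with $m=2$ gives $f(y)=2^{1-s}f(2y)-f\bigl(y+\tfrac12\bigr)$ for $0<y<\tfrac12$; since $y+\tfrac12$ stays in the fixed interval $[\tfrac12,\tfrac34]$ while $2y$ moves away from $0$, iterating $\approx\log_2(1/y)$ times yields $|f(y)|=O\bigl(\operatorname{dist}(y,\{0,1\})^{\Re s-1}\bigr)$. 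Second, and more seriously: for $\Re s\le 0$ this bound is not locally integrable at the endpoints, so $f$ does not canonically define a periodic distribution; any regularization is unique only modulo distributions supported at $0\in\R/\Z$, hence its Fourier coefficients are determined only modulo polynomials in $n$, and the recursion $\widehat f(m\ell)=m^{-s}\widehat f(\ell)$ — the engine of your upper bound — must be re-derived modulo that ambiguity. The cleaner route (essentially Milnor's) is to first show that continuous Kubert functions are automatically smooth and then use the maps between $\mathscr{K}_s$ and $\mathscr{K}_{s\pm1}$ (differentiation, and a suitably corrected integration) to shift $s$ into the range $\Re s>1$, where the series converge absolutely and your Fourier argument applies verbatim. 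Until one of these repairs is carried out, the upper bound $\dim\mathscr{K}_s\le 2$ — and with it the real-analyticity, which you deduce from membership in the span of $L_s^{\pm}$ — is not established for $\Re s\le 0$.
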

This is an interesting interpretation of an important result. In fact if 
\[  {\mathfrak l} (x)= {\mathscr L}_s(e^{2i\pi x})\]
we should have, according to this theorem, a linear combination
\[{\mathfrak l} (x)= A_s \zeta(1-s, x)+ B_s \zeta(1-s, 1-x). \]
The values of the coefficients are given in (\cite{Milnor}, p.308)
\[A_s= \frac{i(2\pi)^se^{-\frac{i\pi s}{2}}}{2\Gamma(s) \sin(\pi s)}, \quad  B_s= \frac{-i(2\pi)^se^{\frac{i\pi s}{2}}}{2\Gamma(s) \sin(\pi s)}.  \]
This is precisely an another formulation of Lerch's transformation formula for the function
\[\Phi(z,s, \nu)= \sum_{n=0}^{\infty}\frac{z^n}{(n+\nu)^s},\quad \vert z\vert <1,\; \nu\neq 0, -1,-2,\cdots \]
which is (\cite{Higher}, p.29)
\begin{align*}
\Phi(z,s, \nu)=&\\
 iz^{-\nu} (2\pi)^{s-1}& \Gamma(1-s)\left\{e^{\frac{-i\pi s}{2}} \Phi[e^{-2i\pi \nu},1-s, \frac{(\log z)}{2i\pi} ]-
e^{i\pi(\frac{ s}{2}+2\nu)} \Phi[e^{2i\pi \nu},1-s, 1- \frac{(\log z)}{2i\pi} ] \right\}
\end{align*}
and which reduces to the functional equation of the Riemann zeta function when $z=1, \nu=0 , \Re s>1$.
\begin{remark}
The theorem of Milnor asserts that every function in the space $ {\mathscr K}_s$ is real analytic.  These functions are eigenfunctions corresponding to the eigenvalue $\lambda=1$ of the Perron-Frobenius operator. However the later has  eigenfunctions corresponding to the eigenvalue $\lambda= \frac{1}{2}$ which are continuous but nowhere differentiable. As mentioned  in (\cite{Hata}, p.361) the Tagaki function (or the blancmange function)  $T(x)$ is an example of a such function. This function is defined by
\[T(x)= \sum_{n=1}^{\infty}  \Psi(2^nx)-\frac{1}{2} \]
where
 \[\Psi(x) = \inf \{ \vert x-n\vert , n \in \Z\}= \left \vert x-2\floor{\frac{x+1}{2}}\right \vert \]
  is the sawtooth function, periodic  of period $1$.
\end{remark}
 
\section{Three power series}
The essential of the analytic properties  of   the polylogarithm function $\displaystyle {\mathscr L}_s(z)= \sum_{ n \geq 1} \frac{z^k}{k^s}  $                    come from the integral representation 
\[{\mathscr L}_s(z)= \frac{z}{\Gamma(s)} \int_0^{\infty} \frac{t^{s-1}}{e^t-z}\, dt, \quad\Re s>0,\;z\notin (1,\infty).\]
Let $\displaystyle \vartheta= z\frac{d}{dz}$  be Boole's differential operator. We define an inverse  of $\vartheta$  by \[\displaystyle  \vartheta^{-1} f(z)= \int_0^z  f(u) \frac{du}{u}\] defined on the class of analytic functions near the origin, and vanishing at the origin. For $s=n$ a positive integer we have the symbolic representation as an iterated integral
\begin{equation}\label{iterated}
\displaystyle  {\mathscr L}_n(z)= \vartheta^{-n}\frac{z}{1-z}.\end{equation}
To define the next function we recall first the definition of the M\"{o}bius arithmetical function 
\[\mu(n)= \mu_n= \begin{cases}
1   \text{ if} \,\; n= 1\\
\\
 0 \text{ if } \, n\text { has one or more repeated prime factors}\\
 \\
(-1)^k  \,\text {if}\, \, n\,\, \text{is the product of } k \, \text{prime\, factors}.
\end{cases}
\]
The importance of the M\"{o}bius function  lies in the following inversion
\begin{equation}\label{Mobinv}
 f(n)= \sum_{d\vert n}g(d) \iff g(n)= \sum_{d\vert n}f(d)\mu(\frac{n}{d})=  \sum_{d\vert n}f(\frac{n}{d} )\mu(d).
\end{equation}

The generalized M\"{o}bius inversion may be formulated as follows:
If t varies on the half-line t > 0, and if either
$\displaystyle g(t)= O(t^{-1-\eta})$
 holds for some $\eta>0$ or
 $\displaystyle h(t)= O(t^{-1-\delta}) $
  holds for some $\delta>0$ then
  \[h(t)= \sum_{n=1}^{\infty}g(nt) \]
is equivalent to
\[g(t)= \sum_{n=1}^{\infty} \mu(n) h(nt).\]
 The main objective of this section is the study the relationships between  the three functions ${\mathscr L}_s(z), {\mathscr M}_s(z)$ and ${\mathscr C}_s(z)$ defined by the following power series:
\begin{enumerate}
\item  $\displaystyle {\mathscr L}_s(z) = \sum_{ k \geq 1} \frac{z^k}{k^s},      \; \vert z \vert \leq 1, \quad \Re s >1 $ or $ \vert z \vert <1, \; \Re s \geq 0       $,
\item  $ \displaystyle {\mathscr M}_s(z) =  \sum_{ k \geq 1}\mu_k \frac{z^k}{k^s}, \; \vert z \vert \leq 1  \; \Re s >1 $ or $  \vert z \vert <1, \quad \Re s \geq 0           $.
This series is most known when $s=0$ and $\vert z\vert<1 $. It amounts to the series 
$\displaystyle \sum_{k \geq 1} \mu_k z^k$ studied by Bateman and Chowla \cite{BC}, \cite{Chowla}. They use the crucial estimates for sums of M\"obius functions values, due to Davenport \cite{D}: For every $A>0$, there exists a constant $D(A)$ such that, uniformly for $\vert z\vert \leq1$
\begin{equation}\label{Bateman}
\left\vert \sum_{0<j\leq x}\mu(j)z^j\right\vert \leq D(A) \log(x+1)^{-A}.
\end{equation}
\item  $\displaystyle {\mathscr C}_{s,l}(z) = \sum_{k \geq 1} c_k(l)  \frac{z^k}{k^s} \quad \vert z \vert \leq 1  \quad \Re s >1 $ or $ \vert z \vert <1, \quad \Re s \geq 0   $,
\end{enumerate}
where  $c_k(l) $ is the Ramanujan  sum
\[\displaystyle c_q(n)= \sum_{\substack {a=1\\(a,q)=1}}^n e^{2 i \pi \frac{an}{q}}=     \sum_{\substack {a=1\\(a,q)=1}}^n \cos(2\pi \frac{an}{q}).\] 
As we will see the series $ \displaystyle {\mathscr C}_{s,l}(z) = \sum_{k \geq 1} c_k(l)  \frac{z^k}{k^s}$ is most known only when $z=1$ and $ \Re s>1 $. Its sum was given by Ramanujan, and simplified methods were found by Estermann and others.\\
 For fixed $n$, $c_q(n)$  is a multiplicative function: if $q_1,q_2$ are relatively prime, then
\[c_{q_1}(n)c_{q_2}(n)= c_{q_1 q_2}(n).\]
Moreover $c_q(n)$ is a periodic function of $n $ with period $q$.
When $(m,k)=1$ we have $c_k(m) = \mu_k$, and when $(m, k) = k$ we have $c_k(m)= \Phi(m)$, $\Phi$ being the Euler's totient function, with for every
positive integer $N$,  $\Phi(N)$ is the number of positive integers less than or equal to $N$ and relatively prime to $N$. More generally H\"{o}lder \cite{Holder} showed that Ramanujan's sum can also be expressed in closed form as follows:
\[c_k(m)= \frac{\Phi(m)}{\Phi \left(\frac{m}{(k,m)}\right)} \mu\left(\frac{m}{(k,m)}\right). \]
Three well known properties of the $\Phi$-function are important for further extension. For every positive integer $N$
\begin{align}\label{totient}
 N&= \sum_{d\vert N} \Phi(d) \nonumber\\ 
 \Phi(N)&= N \prod_{\substack{ p\vert N\\ 
p\,{\rm prime}}}          \left(1-\frac{1}{p} \right).
\end{align}
An important property of the Ramanujan coefficients is their orthogonality, that can be used to compute the Ramanujan coefficients
\begin{lemma}[Orthogonality relations]
Let $\Phi$ the Euler's totient function, then
\[\lim_{x \to +\infty} \frac{1}{x} \sum_{n\leq x} c_r(n) c_s(n)= \Phi(r)\]
if $r=s$ and zero otherwise. More generally
\[  \lim_{x \to +\infty} \frac{1}{x} \sum_{n\leq x} c_r(n) c_s(n+h)=   c_r(h)    \]
if $r=s$ and zero otherwise.
\end{lemma}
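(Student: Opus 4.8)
The two displayed formulas are the cases $h=0$ and general $h$ of a single statement, so it suffices to prove the $h$-version and then specialize. The plan is to expand each Ramanujan sum into exponentials and evaluate the resulting Ces\`aro mean frequency by frequency.

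First I would use $c_r(n)=\sum_{\substack{a=1\\(a,r)=1}}^{r}e^{2\pi i an/r}$ to write
\[
c_r(n)\,c_s(n+h)=\sum_{\substack{a=1\\(a,r)=1}}^{r}\ \sum_{\substack{b=1\\(b,s)=1}}^{s} e^{2\pi i bh/s}\, e^{2\pi i n\left(\frac{a}{r}+\frac{b}{s}\right)}.
\]
Summing over $n\le x$ and dividing by $x$, the inner sum $\sum_{n\le x}e^{2\pi i n\theta}$ with $\theta=\frac{a}{r}+\frac{b}{s}$ is a geometric progression: for $\theta\notin\Z$ it stays bounded (by $1/|\sin\pi\theta|$), so it contributes $O_{r,s}(1/x)$, while for $\theta\in\Z$ it equals $\lfloor x\rfloor$. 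Hence in the limit only the pairs $(a,b)$ with $\frac{a}{r}+\frac{b}{s}\in\Z$ survive. (Equivalently, $n\mapsto c_r(n)c_s(n+h)$ is periodic of period $\lcm(r,s)$, so the mean is just the average over one period; the exponential expansion is a convenient way to compute it.)

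Next I would decide which pairs survive. Writing $d=\gcd(r,s)$, $r=dr'$, $s=ds'$ with $(r',s')=1$, the condition $\frac{a}{r}+\frac{b}{s}\in\Z$ becomes $dr's'\mid as'+br'$; reducing modulo $r'$ forces $r'\mid a$ (since $(r',s')=1$), and because $(a,r)=1$ this gives $r'=1$, and symmetrically $s'=1$. Thus the mean vanishes unless $r=s$, which is the orthogonality assertion. When $r=s$, the surviving pairs are exactly those with $b\equiv -a\pmod r$; for each $a$ with $(a,r)=1$ there is precisely one such $b$ in $\{1,\dots,r\}$, it satisfies $(b,r)=(a,r)=1$, and it contributes $e^{2\pi i bh/r}=e^{-2\pi i ah/r}$. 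Summing over $a$ yields
\[
\lim_{x\to+\infty}\frac1x\sum_{n\le x}c_r(n)\,c_r(n+h)=\sum_{\substack{a=1\\(a,r)=1}}^{r}e^{-2\pi i ah/r}=c_r(-h)=c_r(h),
\]
the last equality because $a\mapsto r-a$ permutes the residues prime to $r$. Setting $h=0$ gives $c_r(0)=\Phi(r)$, i.e. the first displayed identity.

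The routine part is bounding the incomplete geometric sums (and equivalently the incomplete last period), which costs only $O_{r,s}(1/x)\to 0$. The one step deserving care — though it is short — is the coprimality argument pinning down the surviving frequencies, since that is exactly where both the vanishing for $r\neq s$ and the value $c_r(h)$ for $r=s$ come from. No deeper input, such as H\"older's closed form for $c_k(m)$ or Davenport's estimate \eqref{Bateman}, is needed here.
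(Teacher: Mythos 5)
Your argument is correct and complete: the expansion of $c_r(n)c_s(n+h)$ into exponentials, the Ces\`aro-mean computation of $\frac1x\sum_{n\le x}e^{2\pi i n\theta}$ (which vanishes for $\theta\notin\Z$ by the geometric-sum bound and tends to $1$ for $\theta\in\Z$), and the coprimality argument showing that $\frac{a}{r}+\frac{b}{s}\in\Z$ with $(a,r)=(b,s)=1$ forces $r=s$ and $b\equiv -a\pmod r$, are all sound; the specializations to $c_r(h)$ and $\Phi(r)$ follow as you say. Note that the paper itself states this lemma without proof, as a classical property of Ramanujan sums, so there is no in-text argument to compare against; what you have written is the standard proof and fills that gap correctly.
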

 The functions ${\mathscr M}_s,    {\mathscr L}_s $, tough different in nature, share the same difference-differential equation
\begin{equation}\label{F-equation}
z\frac{\partial}{\partial z}f(z,s)= f(z,s-1),
\end{equation}
but the series  $ \displaystyle {\mathscr M}_s(z)$ does not seem to have attracted much attention. For the particular case $z=1  $ and $\Re s>1   $ we have with $\displaystyle \sigma_{s-1}(n)= \sum_{d\vert n}d^{s-1}$
\[{\mathscr L}_s(1) = \zeta(s),\quad {\mathscr M}_s(1) = \frac{1}{\zeta(s)},\quad {\mathscr C}_{s,l}(1) = \frac{\sigma_{s-1}(n)}{n^{s-1}\zeta(s)}.\]
The last equality can be understood in the framework of Ramanujan-Fourier series: Given an arithmetical function $a:\BN \to \BC$, the Ramanujan-Fourier series for $a$ is 
the series 
\[a(n)= \sum_{q=1}^{\infty} a_q c_q(n),\quad n\in \BN.\]
The coefficients can be computed by using the previous  orthogonality relations,
but we will be concerned by  a different kind of approach.\\
Let  $f,\, g: \N \to \C $ two arithmetical functions. The Dirichlet convolution product of  $f, g$ is defined by
\[f\star g(n)= \sum_{d\vert n} f(d) g(\frac{n}{d}),\; n\in \N.\]
For example  if $1: n\to n      $ is the identity arithmetical function,   the inversion formula \eqref{Mobinv} is just
\[f=g\star1 \iff g=f\star \mu. \]
To study the expansion in Ramanujan series, or to find relations between the three series $  {\mathscr L}_s(z), {\mathscr M}_s(z) $  and $  {\mathscr C}_{s,l}(z) $ we will make use of a result of H. Delange \cite{Delange} and a result of L. Lucht \cite{Lucht}. 
\begin{theorem}[Delange]
Suppose that
\[f(n)= \sum_{d\vert n} g(d)= (g\star 1)(n) \]
and that
\begin{equation}\label{Delange1}
 \sum_{n=1}^{\infty}  2^{\omega(n)}\frac{\vert g(n)\vert}{n} <\infty,
\end{equation}
where $\omega(n)$  is the number of distinct prime divisors of n. Then f admits a Ramanujan expansion with
\[ \hat{f}(q)= \sum_{m=1}^{\infty} \frac{g(q m)}{qm}. \]
More precisely for each $n$ the sum
\[\sum_{q=1}^{\infty} \hat{f}(q)c_q(n)  \]
is absolutely convergent and is equal to  $f(n)$.
\end{theorem}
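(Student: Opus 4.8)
The plan is to reduce the statement to two classical facts about Ramanujan sums, with hypothesis \eqref{Delange1} entering only to legitimize a rearrangement of an absolutely convergent double series. The first fact is the divisor identity
\[
\sum_{q\mid k} c_q(n)=\begin{cases} k,& k\mid n,\\ 0,& k\nmid n,\end{cases}\qquad k,n\ge 1 ,
\]
and the second is a bound, uniform in $k$,
\[
\sum_{q\mid k}\bigl|c_q(n)\bigr|\ \le\ C_n\,2^{\omega(k)},\qquad C_n:=\sum_{d\mid n} d .
\]

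First I would establish these two facts. For the divisor identity: as $q$ runs over the divisors of $k$ and $a$ over a reduced residue system modulo $q$, the fractions $a/q$ enumerate, each exactly once, the set $\{\,j/k:0\le j<k\,\}$ (write $j/k$ in lowest terms), so $\sum_{q\mid k}c_q(n)=\sum_{j=0}^{k-1}e^{2\pi i jn/k}$, which is $k$ when $k\mid n$ and $0$ otherwise; equivalently, it is M\"obius inversion applied to $c_q(n)=\sum_{d\mid\gcd(q,n)}d\,\mu(q/d)$. For the bound I would use H\"older's closed form recalled above: $c_q(n)=0$ unless $q/\gcd(q,n)$ is squarefree, and in that case $\bigl|c_q(n)\bigr|=\Phi(q)/\Phi\bigl(q/\gcd(q,n)\bigr)\le\gcd(q,n)$, because $\Phi(ab)\le a\Phi(b)$. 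Writing such a divisor $q\mid k$ as $q=uv$ with $v=\gcd(q,n)$ and $u=q/v$ squarefree — so that $v\mid\gcd(k,n)$ and $u\mid k/v$ — the assignment $q\mapsto(u,v)$ is injective, whence
\[
\sum_{q\mid k}\bigl|c_q(n)\bigr|\ \le\ \sum_{v\mid\gcd(k,n)} v\,2^{\omega(k/v)}\ \le\ 2^{\omega(k)}\sum_{v\mid\gcd(k,n)} v\ \le\ 2^{\omega(k)}\sum_{d\mid n} d ,
\]
the last step because a divisor of $n$ has no more divisors than $n$.

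With these in hand the remainder is routine. Since $2^{\omega(k)}\ge 1$, \eqref{Delange1} gives $\sum_{k\ge 1}|g(k)|/k<\infty$, and $\sum_{m\ge1}|g(qm)|/(qm)$ is a subseries of it, so each $\hat f(q)=\sum_{m\ge1}g(qm)/(qm)$ converges absolutely. Regrouping the double sum over pairs $(q,m)$ by $k=qm$ and using the bound above,
\[
\sum_{q\ge 1}\bigl|\hat f(q)\bigr|\,\bigl|c_q(n)\bigr|\ \le\ \sum_{q\ge1}\bigl|c_q(n)\bigr|\sum_{m\ge1}\frac{|g(qm)|}{qm}\ =\ \sum_{k\ge1}\frac{|g(k)|}{k}\sum_{q\mid k}\bigl|c_q(n)\bigr|\ \le\ C_n\sum_{k\ge1}2^{\omega(k)}\frac{|g(k)|}{k}\ <\ \infty .
\]
Hence $\sum_{q}\hat f(q)c_q(n)$ converges absolutely; the terms $\tfrac{g(qm)}{qm}c_q(n)$, being absolutely summable over $(q,m)\in\N^2$, may be regrouped by $k=qm$, and then the divisor identity gives
\[
\sum_{q\ge 1}\hat f(q)\,c_q(n)\ =\ \sum_{k\ge1}\frac{g(k)}{k}\sum_{q\mid k}c_q(n)\ =\ \sum_{\substack{k\ge1\\ k\mid n}}\frac{g(k)}{k}\cdot k\ =\ \sum_{d\mid n}g(d)\ =\ f(n),
\]
which is the assertion.

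The only genuinely non-formal ingredient is the bound $\sum_{q\mid k}|c_q(n)|\le C_n 2^{\omega(k)}$. The crux is the observation, read off from H\"older's formula, that $q\mapsto c_q(n)$ is supported on those $q$ for which $q/\gcd(q,n)$ is squarefree; this is exactly what converts the divisor count $d(k)$ into the weight $2^{\omega(k)}$ that appears in \eqref{Delange1}. Without it one only gets $\sum_{q\mid k}|c_q(n)|\ll_n d(k)$, which would force the strictly stronger hypothesis $\sum_{n}d(n)|g(n)|/n<\infty$. Everything else — well-definedness of the finite sum $f(n)=\sum_{d\mid n}g(d)$ and the rearrangements — is routine manipulation of absolutely convergent series.
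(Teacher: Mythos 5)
Your proof is correct and complete. The paper itself does not prove this theorem --- it is quoted from Delange's article \cite{Delange} without proof --- so there is nothing internal to compare against; your argument is essentially Delange's original one. The two ingredients check out: the identity $\sum_{q\mid k}c_q(n)=k\,[k\mid n]$ follows as you say either from the Farey enumeration or from M\"obius inversion of $c_q(n)=\sum_{d\mid(q,n)}d\,\mu(q/d)$, and the bound $\sum_{q\mid k}|c_q(n)|\le \sigma(n)\,2^{\omega(k)}$ is correctly extracted from H\"older's formula via the support condition that $q/\gcd(q,n)$ be squarefree together with $\Phi(ab)\le a\Phi(b)$; the Tonelli/Fubini regrouping of the double sum over $qm=k$ is then legitimate and yields $f(n)$. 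One incidental remark: your closing observation that the crude bound $\sum_{q\mid k}|c_q(n)|\ll_n d(k)$ would require the \emph{stronger} hypothesis $\sum_n d(n)|g(n)|/n<\infty$ is right, and it quietly corrects the paper's surrounding discussion, which mislabels the condition \eqref{Delange2} as a weakening of \eqref{Delange1} when in fact $d(n)\ge 2^{\omega(n)}$ makes it more restrictive.
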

We can weaken this statement by observing that  from the prime decomposition of  a positive integer $n= \displaystyle \prod_{i=1}^kp_i^{e_i},\, e_i\geq 1$, the number of 
divisors $d(n)$ and the number of prime divisors $\omega(n)$ are related by
\[d(n)= (e_1+1)+\cdots+(e_k+1)\geq 2^{\omega(n)}. \]
So the condition \eqref{Delange1} can be weakened by asking only
\begin{equation}\label{Delange2}
 \sum_{n=1}^{\infty}d(n)\frac{\vert g(n)\vert}{n} <\infty.
\end{equation}
To give an application of this theorem we begin by a review of some additional properties of Ramanujan sums. First of all for $z=1$ the sum is the celebrated formula of Ramanujan \cite{Ramanujan} (p.199): For  $k > 1$ and  $s\in \C\setminus \{1\}$ with $\Re s>0$
\begin{equation}\label{ramanujan}
\sum_{m=1}^{\infty} \frac{c_k(m)}{m^s}= \zeta(s) \sum_{d\vert k} d^{1-s}\mu(\frac{k}{d}),
\end{equation}
 and even for $s=1$ we have \cite {Ramanujan} (p.199)
\[ \sum_{m=1}^{\infty} \frac{c_k(m)}{m}= - \sum_{d\vert k}\mu(\frac{k}{d}) \log d= -\Lambda(k), \]
where $k> 1$  and $\Lambda(k)$ is Mangoldt's function
\[\Lambda(n)= \left\{\begin{matrix}
\log p& {\rm if}\; n= p^m\; {\rm for\,some}\, m\geq 1\\
\\
0& {\rm else}
\end{matrix}
\right.
\]
Another formula of Ramanujan  \cite {Ramanujan} (p.185) is
\begin{equation}\label{Ramanujan2}
 \sum_{k=1}^{\infty} \frac{c_k(m)}{k^s}= \frac{\sigma_{1-s}(m)}{\zeta(s)},
\end{equation}
valid for $\Re s> 1$ and also for $s = 1$.  In this case the sum vanishes. According to Ramanujan this assertion is  equivalent to the Prime Number Theorem.\\
This formula results directly from Delange's theorem.
The next role will be played by the following lemma. 
 \begin{lemma}[Romanoff]
  If $ k <n$ and  $f(u)$ is any function defined on the positive integers then
\begin{equation}\label{Romanoff}
\sum_{d\vert n} \mu(\frac{n}{d}) f((d,k))= 0.
\end{equation}
 \end{lemma}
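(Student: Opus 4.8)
The plan is to use the hypothesis $k<n$ only through its consequence $n\nmid k$ (for positive integers the first implies the second but not conversely). This consequence guarantees a prime $p$ whose exponent $v_p(n)$ in $n$ strictly exceeds its exponent $v_p(k)$ in $k$, where $v_p$ denotes the $p$-adic valuation. Fix such a prime and write $n=p^e m$ with $p\nmid m$; then $e=v_p(n)\geq 1$ and $e-1\geq v_p(k)$. Every divisor $d$ of $n$ factors uniquely as $d=p^j d'$ with $0\leq j\leq e$ and $d'\mid m$, and since $p^{e-j}$ is coprime to $m/d'$ we have $\mu(n/d)=\mu(p^{e-j})\,\mu(m/d')$.

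First I would regroup the left-hand side of \eqref{Romanoff} along this factorisation:
\[
\sum_{d\mid n}\mu\!\left(\frac{n}{d}\right)f\bigl((d,k)\bigr)
=\sum_{d'\mid m}\mu\!\left(\frac{m}{d'}\right)\sum_{j=0}^{e}\mu\bigl(p^{e-j}\bigr)\,f\bigl((p^j d',k)\bigr).
\]
In the inner sum only $j=e$ and $j=e-1$ survive, because $\mu(p^{e-j})=0$ once $e-j\geq 2$; their coefficients are $\mu(1)=1$ and $\mu(p)=-1$. So the inner sum reduces to $f\bigl((p^e d',k)\bigr)-f\bigl((p^{e-1}d',k)\bigr)$.

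The crux is to observe that these two arguments of $f$ coincide. As $p\nmid m$ and $d'\mid m$ we have $v_p(d')=0$, so the $p$-part of $(p^j d',k)$ is $p^{\min(j,v_p(k))}$ while the contribution of the remaining primes does not depend on $j$. Since $e-1\geq v_p(k)$ we get $\min(e,v_p(k))=\min(e-1,v_p(k))=v_p(k)$, hence $(p^e d',k)=(p^{e-1}d',k)$ for every $d'\mid m$. Thus each inner sum vanishes and so does the whole expression, which proves \eqref{Romanoff}.

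I do not anticipate a real obstacle; the only point needing care is the opening one, namely that $k<n$ forces $n\geq 2$ and the existence of a prime $p$ with $v_p(n)>v_p(k)$. (A more symmetric alternative restricts the sum to the divisors $d$ for which $n/d$ is squarefree, parametrises them by the subsets $T$ of the set of primes dividing $n$, splits each $T$ according to whether the prime in question has the larger exponent in $n$ or in $k$, and applies the identity $\sum_T(-1)^{\lvert T\rvert}=0$ over the second --- nonempty --- family; this produces the same cancellation but at greater length.)
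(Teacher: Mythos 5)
Your proof is correct, and it takes a genuinely different (and more complete) route than the paper's. The paper only sketches an argument: it regroups the sum by the common value $\delta=(d,k)$, writing $\sum_{d\mid n}\mu(n/d)f((d,k))=\sum_{\delta}f(\delta)\sum_{d\mid n,\,(d,k)=\delta}\mu(n/d)$, and leaves to the reader the verification that each inner sum vanishes --- which itself still needs an argument, e.g.\ observing that for each $\delta\mid(n,k)$ one has $\sum_{d\mid n,\ \delta\mid d}\mu(n/d)=\sum_{c\mid n/\delta}\mu(c)$, which is $1$ if $n=\delta$ and $0$ otherwise, hence $0$ here because $\delta\mid k$ forces $\delta\le k<n$, and then inverting downward over the divisors of $(n,k)$. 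You instead isolate a single prime $p$ with $v_p(n)>v_p(k)$ --- whose existence is exactly what $k<n$ (indeed the weaker hypothesis $n\nmid k$) buys --- and obtain a direct telescoping cancellation $f\bigl((p^ed',k)\bigr)-f\bigl((p^{e-1}d',k)\bigr)=0$ in each fibre over the divisors $d'$ of the $p$-free part of $n$. Each step checks out: the factorisation $\mu(n/d)=\mu(p^{e-j})\mu(m/d')$, the survival of only $j=e$ and $j=e-1$, and the gcd equality via $\min(e,v_p(k))=\min(e-1,v_p(k))=v_p(k)$. Your version is self-contained and makes transparent that the operative hypothesis is $n\nmid k$; the paper's regrouping has the mild advantage of exhibiting the coefficient of each value $f(\delta)$, which matches how the lemma is then applied to orthogonalise the sequences $y_n=\sum_{d\mid n}\mu(n/d)x_d$.
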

This Lemma  can be proved, for example, by establishing that 
\[ \sum_{d\vert n} \mu(\frac{n}{d}) f((d,k))= \sum_{\delta} f(\delta) \sum_{\substack{d\vert n \\ (d,k)=\delta}} \mu(\frac{n}{d}).\]
This lemma is very important of the following reason. Let $(x_n)$ be a sequence in a Hilbert space $\mathcal H$ with an inner product
$ \langle , \rangle$ and let $g: \N\times \N \setminus \{(0, 0)\}\to \C$ be an  arithmetical function such the $\langle x_n, x_m \rangle= g(n,m) $. Then 
the sequence \[y_n= \sum_{d\vert n}  \mu(\frac{n}{d}) x_d            \] is an orthogonal sequence.
 Another important result is the following
\begin{theorem}[Lucht]\label{th1}
Let $g: \N\to \C$ be an arbitrary arithmetical function. Then the following assertions are equivalent: 
\begin{enumerate} 
\item The series $\hat{g}(k)= \displaystyle \sum_{n \geq 1} g(n)c_n(k) $ converges (absolutely) for every $k\in \N^*$ and determine an arithmetical function $\hat{g}$.
\item The series  $\gamma(k)= k\displaystyle \sum_{n \geq 1} \mu(n) g(kn) $  converges (absolutely) for every $k\in \N^*$ and determine an arithmetical function  $\gamma$.
\end{enumerate}
In the case of convergence we have the convolution products $\gamma = \mu \star {\hat{g}} $ or $1\star \gamma= \hat{g}   $.
\end{theorem}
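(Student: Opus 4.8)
\emph{Proof proposal.} The plan is to reduce the whole statement to the classical expression of Ramanujan's sum as a divisor sum against the M\"obius function,
\[
c_n(k)=\sum_{d\mid (n,k)} d\,\mu(n/d)
\]
(so that $c_n(k)=\mu(n)$ when $(n,k)=1$, as recorded above), together with the elementary inequality $\Phi(km)\ge\Phi(m)$, valid for all $k,m\ge 1$: indeed, by \eqref{totient}, $\Phi(km)/\Phi(m)=k\prod_{p\mid k,\ p\nmid m}(1-p^{-1})\ge\prod_{p\mid k,\ p\nmid m}(p-1)\ge 1$, since $k\ge\prod_{p\mid k,\ p\nmid m}p$. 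Throughout I read ``converges (absolutely)'' as absolute convergence, which is the setting in which the equivalence is cleanest.

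I would first record the two convolution identities. For fixed $n$ the inner sum over $d\mid(n,k)$ is finite, so whenever the relevant double series converges absolutely one may interchange it with $\sum_{n\ge1}$; applying this to $\hat g(k)=\sum_{n\ge1}g(n)\sum_{d\mid(n,k)}d\,\mu(n/d)$ and re-indexing $n=dm$ gives
\[
\hat g(k)=\sum_{d\mid k} d\sum_{m\ge1}\mu(m)\,g(dm)=\sum_{d\mid k}\gamma(d),
\]
that is $\hat g=1\star\gamma$. Dually, M\"obius inversion applied to $e\mapsto c_n(e)$ yields the finite identity $\sum_{e\mid k}\mu(k/e)\,c_n(e)=k\,\mu(n/k)$ if $k\mid n$ and $0$ otherwise; hence $\sum_{e\mid k}\mu(k/e)\,\hat g(e)=k\sum_{m\ge1}\mu(m)\,g(km)=\gamma(k)$, i.e. $\gamma=\mu\star\hat g$. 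These two identities are equivalent through ordinary M\"obius inversion in the Dirichlet ring, so only the equivalence of absolute convergence remains.

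For $(2)\Rightarrow(1)$ I would estimate, for each $k$,
\[
\sum_{n\ge1}|g(n)|\,|c_n(k)|\le\sum_{n\ge1}|g(n)|\sum_{d\mid(n,k)}d\,|\mu(n/d)|=\sum_{d\mid k}d\sum_{m\ge1}|\mu(m)|\,|g(dm)|<\infty ,
\]
each inner series being finite by hypothesis~(2) applied with $k$ replaced by the divisor $d$; the interchange used above is then justified by this same bound. For $(1)\Rightarrow(2)$, fix $k$ and isolate in $\sum_{n\ge1}|g(n)|\,|c_n(k)|$ the subseries over $n=km$ with $m$ squarefree; since $(km,k)=k$, H\"older's formula gives $c_{km}(k)=\mu(m)\,\Phi(km)/\Phi(m)$, so on this subseries $|c_{km}(k)|=\Phi(km)/\Phi(m)\ge 1$, whence
\[
\sum_{m\ge1}|\mu(m)|\,|g(km)|=\sum_{m\text{ squarefree}}|g(km)|\le\sum_{m\text{ squarefree}}|g(km)|\,|c_{km}(k)|\le\sum_{n\ge1}|g(n)|\,|c_n(k)|<\infty ,
\]
and the interchange yielding $\gamma=\mu\star\hat g$ is legitimate because it merely swaps the finite sum over $e\mid k$ with the absolutely convergent series in~(1).

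I expect $(1)\Rightarrow(2)$ to be the only genuine obstacle. The hypothesis there controls $\sum_n|g(n)|\,|c_n(k)|$, inside which the target values $g(km)$ occur weighted by $c_{km}(k)$, and the argument collapses unless these weights stay bounded away from $0$ uniformly in the squarefree $m$; this is exactly what $\Phi(km)/\Phi(m)\ge1$ provides. Everything else is bookkeeping with absolutely convergent double series, finite divisor sums, and M\"obius inversion in the Dirichlet convolution ring.
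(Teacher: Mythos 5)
Your proposal is correct, and it is worth saying up front that the paper itself gives no proof of this statement: Theorem~\ref{th1} is quoted from Lucht's article and used as a black box, so you are supplying an argument the paper omits. Your two finite identities --- $\hat g=1\star\gamma$ obtained from $c_n(k)=\sum_{d\mid(n,k)}d\,\mu(n/d)$ by the substitution $n=dm$, and $\gamma=\mu\star\hat g$ obtained from the dual identity $\sum_{e\mid k}\mu(k/e)c_n(e)=k\,\mu(n/k)$ for $k\mid n$ (and $0$ otherwise) --- are exactly the right skeleton, and both directions of the convergence equivalence check out: $(2)\Rightarrow(1)$ by the Tonelli bound $\sum_n|g(n)|\sum_{d\mid(n,k)}d|\mu(n/d)|=\sum_{d\mid k}d\sum_m|\mu(m)||g(dm)|$, and $(1)\Rightarrow(2)$ by extracting the subseries $n=km$, $m$ squarefree, where H\"older's formula and $\Phi(km)\ge\Phi(m)$ give $|c_{km}(k)|\ge1$. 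Two remarks. First, the H\"older detour is avoidable: your own inversion identity gives $k|\mu(m)g(km)|\le\sum_{e\mid k}|g(km)||c_{km}(e)|$, and summing over $m$ bounds $k\sum_m|\mu(m)g(km)|$ by $\sum_{e\mid k}\sum_n|g(n)||c_n(e)|$, which is finite under~(1); this is slightly cleaner and uses no lower bound on Ramanujan sums. Second, you restrict to absolute convergence, whereas the parenthetical ``(absolutely)'' in the statement (and Lucht's original theorem) asserts the equivalence for ordinary convergence as well. This is not a real obstacle for your method: the same two finite identities applied to partial sums give $\sum_{n\le N}g(n)c_n(k)=\sum_{d\mid k}d\sum_{m\le N/d}\mu(m)g(dm)$ and $k\sum_{m\le M}\mu(m)g(km)=\sum_{e\mid k}\mu(k/e)\sum_{n\le kM}g(n)c_n(e)$, and letting $N,M\to\infty$ yields both implications for conditional convergence with no extra input. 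You should add that sentence to claim the full statement.
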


As a first application we take  $\displaystyle g(n)=g_{z,s}(n) = \frac{z^n}{n^s} $, with $\vert z \vert  \leq 1$, $\Re s >1 $ or $\vert z \vert <1,\; \Re s\geq 0$  to obtain by the  theorem of Lucht:
 \[\hat{g}(k) =  \hat{g}_z(k)= \sum_{n \geq 1} c_n(k) \frac{z^n}{n^s} = {\mathscr C}_{s,k}(z). \]
 Hence
 \[\gamma (k) = \gamma_{z,s}(k) = \frac{1}{k^{s-1}} \sum_{n \geq 1}\mu_n \frac{z^{nk}}{n^s} = \frac{1}{k^{s-1}} {\mathscr M}_s(z^k).\]
For $s=1$ the uniform convergence on the unit closed disk of the series  $\displaystyle \sum_{n \geq 1} \frac{\mu(n)}{n}z^n$ results from the uniform convergence on $\BR$ of the series $\displaystyle \sum_{n \geq 1} \frac{\mu(n)}{n}e^{2i\pi \theta} $, resulting from  \eqref{Bateman} and the  maximum principle. For the sake of completeness we give all the details of this important result. The following lemma, due to Cahen and Jensen, is classical \cite{HR}:
\begin{lemma}\label{Cahen}
If the Dirichlet series $\displaystyle f(s) = \sum_{k=1}^{\infty} a_ke^{-\lambda_k s}$
  is convergent for $s_0$, then it is also convergent for any $s$ in the cone $\vert \arg(s-s_0)\vert \leq \alpha < \frac{\pi}{2}$\,{\rm (Stolz\,angle)}. Furthermore the series is uniformly convergent on any compact set of the cone, as well as any of its derivatives and \[ \displaystyle f^{(n)}(s) = (-1)^n \sum a_n \lambda_k^n e^{-\lambda_n s}.\]
\end{lemma}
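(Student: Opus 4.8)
The plan is to run the classical summation-by-parts argument, under the standing convention (satisfied here with $\lambda_k=\log k$) that $(\lambda_k)_{k\ge1}$ is strictly increasing with $\lambda_k\ge 0$ and $\lambda_k\to\infty$. First I would normalize the base point: replacing $a_k$ by $a_ke^{-\lambda_k s_0}$ and $s$ by $s-s_0$, we may assume $s_0=0$, so that the hypothesis becomes that $\sum_{k\ge1}a_k$ converges and the Stolz angle becomes $\{\,\lvert\arg s\rvert\le\alpha<\pi/2\,\}$. On this angle the single geometric fact used is $\lvert s\rvert\le\sigma/\cos\alpha$ with $\sigma:=\Re s>0$; put $C_\alpha:=1/\cos\alpha$ and let $R_n:=\sum_{k>n}a_k$, so that $a_k=R_{k-1}-R_k$ and $R_n\to0$.

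The heart of the matter is the Abel identity, valid for $1\le m\le n$,
\[
\sum_{k=m}^{n} a_k e^{-\lambda_k s} = R_{m-1}e^{-\lambda_m s} - R_n e^{-\lambda_n s} + \sum_{k=m}^{n-1} R_k\bigl(e^{-\lambda_{k+1}s} - e^{-\lambda_k s}\bigr),
\]
combined with the bound coming from $e^{-\lambda_{k+1}s}-e^{-\lambda_k s} = -s\int_{\lambda_k}^{\lambda_{k+1}}e^{-us}\,du$:
\[
\bigl\lvert e^{-\lambda_{k+1}s} - e^{-\lambda_k s}\bigr\rvert \le \lvert s\rvert\int_{\lambda_k}^{\lambda_{k+1}} e^{-u\sigma}\,du = \frac{\lvert s\rvert}{\sigma}\bigl(e^{-\lambda_k\sigma}-e^{-\lambda_{k+1}\sigma}\bigr).
\]
Summing this last bound over $k$ telescopes to $\frac{\lvert s\rvert}{\sigma}\bigl(e^{-\lambda_m\sigma}-e^{-\lambda_n\sigma}\bigr)\le C_\alpha$. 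Hence, given $\varepsilon>0$ and choosing $N$ with $\lvert R_k\rvert<\varepsilon$ for all $k\ge N$, the Abel identity yields $\bigl\lvert\sum_{k=m}^{n}a_k e^{-\lambda_k s}\bigr\rvert\le\varepsilon(2+C_\alpha)$ for all $n\ge m\ge N+1$ and all $s$ in the angle — a Cauchy estimate uniform in $s$. So the series converges uniformly on the whole Stolz angle, while at the vertex $s=s_0$ convergence is the hypothesis itself.

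For analyticity and term-by-term differentiation I would pass to the open half-plane $H:=\{\Re(s-s_0)>0\}$. Every compact $K\subset H$ lies in some Stolz angle with vertex $s_0$ (if $\delta\le\Re(s-s_0)$ and $\lvert s-s_0\rvert\le M$ on $K$, then $\lvert\arg(s-s_0)\rvert\le\arccos(\delta/M)<\pi/2$), so the partial sums $S_N(s)=\sum_{k\le N}a_k e^{-\lambda_k s}$, which are entire, converge to $f$ uniformly on every such $K$. Weierstrass's theorem on locally uniform limits of holomorphic functions then makes $f$ holomorphic on $H$ with $S_N^{(n)}\to f^{(n)}$ locally uniformly; since $S_N^{(n)}(s)=(-1)^n\sum_{k\le N}a_k\lambda_k^n e^{-\lambda_k s}$, this gives both the locally uniform convergence of the differentiated series and the formula $f^{(n)}(s)=(-1)^n\sum_k a_k\lambda_k^n e^{-\lambda_k s}$. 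As this is essentially the textbook proof I do not anticipate a genuine obstacle; the only delicate point is to keep the bookkeeping with the tails $R_k$ uniform in $s$, which is exactly where the Stolz-angle hypothesis (boundedness of $\lvert s\rvert/\sigma$) is used and what upgrades pointwise convergence to uniform — and hence differentiable — convergence.
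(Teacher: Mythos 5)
Your proof is correct and is precisely the classical Abel-summation argument (partial summation against the tails $R_k$, the bound $\lvert e^{-\lambda_{k+1}s}-e^{-\lambda_k s}\rvert\le\frac{\lvert s\rvert}{\sigma}(e^{-\lambda_k\sigma}-e^{-\lambda_{k+1}\sigma})$, and the Stolz-angle inequality $\lvert s\rvert/\sigma\le 1/\cos\alpha$) that the paper itself does not reproduce but defers to its reference \cite{HR}. Nothing to correct; your version even fixes the index typos in the stated derivative formula.
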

Accordingly
 \[\lim_{s \to 1} {\mathscr M}_s(e^{2i \pi \theta})={\mathscr M}_1( e^{2 i \pi \theta)}).\]
 The Ramanujan-Fourier transform of the arithmetical function  $\displaystyle g(n)= \frac{ z^{n}}{n} $ is  
 \[\displaystyle \hat{g}(k) = \sum_{n \geq 1} c_n(k) \frac{z^n}{n}\] which converges for $ \vert z \vert <1$.
 It also converges for $\vert z \vert =1$. In fact 
 \[\displaystyle \gamma(k) = k \sum_{n \geq 1} \mu(n) \frac{ z^{nk}}{nk} = \sum_{n \geq 1} \mu(n) \frac{ z^{nk}}{n} \]
  converge uniformly on the closed unit disc by Davenport's estimate \eqref{Bateman}. By using \eqref{th1} we obtain the convergence of the series $\hat{g}(k) $ with $\gamma(k) ={\mathscr M}_1(z^k) $, and  finally 
\[
\sum_{n \geq 1} c_n(l) \frac{z^n}{n}  = \sum_{ d \vert l} {\mathscr M}_1(z^d)
\]
for $\vert z \vert =1$, by using a M\"obius inversion of $\gamma = \mu \star \hat{g}$. The maximum principle asserts that this equality continues to be valid even for $\vert z \vert \leq 1$.\\
In the same vein we have:
\[
{\mathscr C}_{s, l} (z) =  \sum_{d \vert l} d^{1-s} {\mathscr M}_s(z^d),
\]
which is an interesting link between the two series $ {\mathscr C}_{s, l} (z) $ and $ {\mathscr M}_{s} (z) $. 
The following lemma is elementary and it is just a variation of \eqref{smallexample}.
\begin{lemma}
We have 
\[\sum_{1 \leq h \leq m} e^{2 i \pi \frac{nhk}{m} }= \left\{
    \begin{array}{ll}
        m & {\rm if }\; m\vert nk \\
        \\
        0& { \rm otherwise.}
    \end{array}
\right.,
\]
or, equivalently, for all $n,m \in \N^*$ and $w,z \in \C$, we have
$$
\sum_{w: w^m=z} w^n = m z^{ \frac{n}{m}}
$$ if $m\vert n$ and zero otherwise.
\end{lemma}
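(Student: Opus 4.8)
The plan is to reduce everything to the elementary summation of a geometric progression of roots of unity. For the first identity I would set $\zeta := e^{2\pi i nk/m}$, so that the left-hand side becomes $\sum_{h=1}^{m}\zeta^{h}$. The key observation is that $\zeta^{m}=e^{2\pi i nk}=1$ for every choice of $n,k,m$, so $\zeta$ is always an $m$-th root of unity, and $\zeta=1$ holds precisely when $nk/m\in\Z$, i.e.\ when $m\mid nk$. In that case all $m$ summands equal $1$ and the sum is $m$; otherwise $\zeta\neq 1$, and the geometric sum formula gives
\[
\sum_{h=1}^{m}\zeta^{h}=\zeta\,\frac{\zeta^{m}-1}{\zeta-1}=0,
\]
since the numerator vanishes while the denominator does not. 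This disposes of the first displayed formula, and in particular yields $\sum_{j=0}^{m-1}e^{2\pi i Nj/m}=m$ when $m\mid N$ and $0$ otherwise.

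For the equivalent statement I would first dispose of the trivial case $z=0$ (the only root is $w=0$ with multiplicity $m$, and both sides vanish for $n\ge 1$), and then, assuming $z\neq 0$, fix one root $w_{0}$ with $w_{0}^{m}=z$. The full solution set of $w^{m}=z$ is then $\{w_{0}\eta:\eta^{m}=1\}=\{w_{0}e^{2\pi i j/m}:0\le j\le m-1\}$, whence
\[
\sum_{w:\,w^{m}=z}w^{n}=w_{0}^{\,n}\sum_{j=0}^{m-1}e^{2\pi i nj/m}.
\]
By the evaluation obtained above, the inner sum equals $m$ when $m\mid n$ and $0$ otherwise; and when $m\mid n$ the exponent $n/m$ is an integer, so $w_{0}^{\,n}=(w_{0}^{\,m})^{n/m}=z^{\,n/m}$ independently of the choice of $w_{0}$. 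Hence the sum is $mz^{\,n/m}$ if $m\mid n$ and $0$ otherwise. The reverse implication is equally elementary, since both formulations rest on the same computation, and need not be written out.

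I do not expect any real obstacle here; the only points deserving a word of care are that the base $\zeta$ of the geometric sum is automatically a root of unity --- so the dichotomy is governed solely by the divisibility $m\mid nk$ --- and that in the second formula $z^{\,n/m}$ must be read as an honest integer power of $z$ when $m\mid n$, which makes it well defined and independent of $w_{0}$. The identity is recorded because it is the multiplicative analogue of \eqref{smallexample} and will serve to handle the substitutions $z\mapsto z^{m}$ in the power series ${\mathscr L}_{s}$, ${\mathscr M}_{s}$, ${\mathscr C}_{s,l}$.
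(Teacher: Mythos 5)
Your proof is correct and is exactly the standard argument the paper has in mind: the paper gives no proof at all, merely declaring the lemma elementary and ``a variation of \eqref{smallexample}'', and your geometric-sum evaluation of $\sum_h \zeta^h$ for the $m$-th root of unity $\zeta=e^{2\pi i nk/m}$, together with the parametrization of the roots of $w^m=z$ as $w_0\eta$ with $\eta^m=1$, supplies precisely the missing details. The points you single out for care (that $\zeta$ is automatically a root of unity, and that $z^{n/m}$ is an honest integer power independent of the chosen root $w_0$) are the right ones.
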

\begin{remark} The meaning of   $nk \in m \N^*$ is the following: We first observe that $k\Z \cap m \Z= \lcm(k,m) \Z$, hence
\[ nk\in  \lcm (k,m) \N^*  \iff n \in \delta(k,m) \N^*,\quad \displaystyle \delta(k,m): = \frac {\lcm(k,m)}{k}\]
and
\begin{equation}
\begin{aligned}
\sum_{1 \leq h \leq m} e^{2 i \pi \frac{nhk}{m} } = \left\{
             \begin{array}{ll}
m & {\rm if}\; n \in \delta(k,m) \N^*\\
\\
0& {\rm \text otherwise}
\end{array}
\right.
\end{aligned}
\end{equation}
 \end{remark}

If we choose $\displaystyle z= e^{2i \pi \frac{h}{m}}$ with some fixed  $m \in \N^*$ and $ 1 \leq h \leq m$, and denote simply 
\[\displaystyle g_{z,1} (n)= g_{\frac{h}{m}} (n) =\frac{e^{2 i \pi \frac{nh}{m}}}{n},\quad \displaystyle \gamma_{z,1}= \gamma_{ \frac{h}{m}}\] we get
\[\gamma _{ \frac{h}{m}} (k) =  k \sum_{n \geq1} \mu(n) g_{ \frac{h}{m}}(kn) =k \sum_{n \geq 1} \mu(n) \frac{ e^{2i \pi \frac{nkh}{m}}}{nk}   ={\mathscr M}_1( e^{2 i \pi \frac{hk}{m}})\]
 and 
\[ \sum_{ 1 \leq h \leq m} \gamma_{\frac{h}{m}} (k) =  m \sum_{ n \in \delta(k,m) \N^*} \frac{ \mu(n)}{n}
\]
or
\[
\sum_{ 1 \leq h \leq m} {\mathscr M}_1 (e^{2i \pi \frac{hk}{m}} ) = m \sum_{ n \in \delta(k,m) \N^*} \frac{\mu(n)}{n}= 0.
\]
As the argument usually used in the proof of the last equality is very present in the present study we give a slightly more general result. We are going to show the following lemma
\begin{lemma}\label{subseries}
 We have  for $\Re s>1$
\begin{equation} \label{subseries2}
\sum_{ n \geq 1} \frac{ \mu(qn)}{n^s}= \frac{\mu(q) q^s}{\Phi_s(q) \zeta(s)}, 
\end{equation}
with
\[ \Phi_s(q)= q^s \prod_{p\vert q}(1-p^{-s})=  \sum_{d\vert q}d^s \mu(\frac{q}{d}).\]
In particular
\begin{enumerate} 
\item $\displaystyle  \sum_{ n \geq 1} \frac{ \mu(qn)}{n} = 0 $ for every $q \in \N^*$.\\
\item  For $d \in \N^*$ we have  $\displaystyle  \sum_{n \in d \N^*} \frac{\mu(n)}{n} =0$
\end{enumerate}
\end{lemma}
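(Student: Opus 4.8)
The plan is to obtain the Dirichlet--series identity \eqref{subseries2} by a direct Euler--product computation, then read off the two special cases; the second is immediate from the first, while the first needs a passage to the boundary $s=1$.

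\emph{Proof of \eqref{subseries2}.} If $q$ is not squarefree, say $p^{2}\mid q$, then $p^{2}\mid qn$ for every $n$, so $\mu(qn)=0$ for all $n$ and the left side vanishes; on the right side $\mu(q)=0$ while $\Phi_s(q)=q^{s}\prod_{p\mid q}(1-p^{-s})\neq 0$ for $\Re s>1$, so both sides are $0$. Assume henceforth that $q$ is squarefree. Then $\mu(qn)=0$ unless $(q,n)=1$ (a common prime would again give a square factor), and when $(q,n)=1$ multiplicativity of $\mu$ gives $\mu(qn)=\mu(q)\mu(n)$. Hence, for $\Re s>1$,
\[
\sum_{n\geq 1}\frac{\mu(qn)}{n^{s}}
=\mu(q)\sum_{\substack{n\geq 1\\(n,q)=1}}\frac{\mu(n)}{n^{s}}
=\mu(q)\prod_{p\nmid q}\bigl(1-p^{-s}\bigr)
=\frac{\mu(q)}{\zeta(s)}\prod_{p\mid q}\bigl(1-p^{-s}\bigr)^{-1}
=\frac{\mu(q)\,q^{s}}{\Phi_s(q)\,\zeta(s)},
\]
using the Euler product of the (absolutely convergent) restricted M\"obius series and $\zeta(s)^{-1}=\prod_{p}(1-p^{-s})$. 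The two expressions for $\Phi_s(q)$ coincide because $q\mapsto\sum_{d\mid q}d^{s}\mu(q/d)$ is the Dirichlet convolution of the completely multiplicative function $d\mapsto d^{s}$ with $\mu$, hence multiplicative, and on a prime power it equals $p^{as}-p^{(a-1)s}$, which multiplies out to $q^{s}\prod_{p\mid q}(1-p^{-s})$.

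\emph{Part (2) from part (1).} For $d\in\N^{*}$ one has $\sum_{n\in d\N^{*}}\mu(n)/n=\sum_{k\geq 1}\mu(dk)/(dk)=d^{-1}\sum_{k\geq 1}\mu(dk)/k$, which is part (1) with $q=d$ divided by $d$; so only part (1) needs proof. For part (1) I would let $s\to 1^{+}$ in \eqref{subseries2}: on the right, $\Phi_s(q)\to\Phi(q)=q\prod_{p\mid q}(1-1/p)>0$ and $q^{s}\to q$ while $\zeta(s)\to+\infty$, so the right side tends to $0$, and it remains to know that the left side converges at $s=1$ and takes this limiting value. Convergence at $s=1$ comes from Davenport's estimate \eqref{Bateman} at $z=1$: writing $[(n,q)=1]=\sum_{d\mid q}\mu(d)[d\mid n]$ and $[d\mid n]=d^{-1}\sum_{j=0}^{d-1}e^{2\pi i jn/d}$ gives
\[
\sum_{n\leq x}\mu(qn)=\mu(q)\sum_{d\mid q}\frac{\mu(d)}{d}\sum_{j=0}^{d-1}\sum_{n\leq x}\mu(n)\,e^{2\pi i jn/d},
\]
and \eqref{Bateman} bounds each inner sum by $O_{A}\!\bigl(x(\log x)^{-A}\bigr)$ uniformly, whence $\sum_{n\leq x}\mu(qn)=O_{q,A}\!\bigl(x(\log x)^{-A}\bigr)$; choosing $A=2$ and summing by parts shows that $\sum_{n\geq 1}\mu(qn)/n$ converges. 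Finally, by the Cahen--Jensen Lemma~\ref{Cahen} the Dirichlet series $\sum_{n\geq 1}\mu(qn)\,n^{-s}$ converges uniformly on a Stolz angle with vertex at $s=1$, hence is continuous there, so its value at $s=1$ equals $\lim_{s\to 1^{+}}$ along the real axis, namely $0$.

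The routine ingredients are the Euler--product manipulation and the prime--power check for $\Phi_s$. The only delicate point is the boundary value in part (1): one genuinely needs Davenport's estimate (a power of $\log$ in the denominator), not merely the prime number theorem, for the partial--summation argument giving convergence of $\sum\mu(qn)/n$ to go through, and one then needs an Abelian theorem in the shape of Lemma~\ref{Cahen} to identify that convergent sum with the limit of the Dirichlet series furnished by \eqref{subseries2}.
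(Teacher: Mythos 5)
Your proof of \eqref{subseries2} is essentially the paper's own argument: the same Euler-product factorization $\prod_p(1-p^{-s})=\prod_{p\mid q}(1-p^{-s})\cdot\prod_{p\nmid q}(1-p^{-s})$, the identification of the restricted M\"obius sum over $(n,q)=1$, and the passage to $s=1$ via Lemma~\ref{Cahen}, which is one of the two routes the paper names. You do supply two details the paper leaves implicit and which are genuinely needed: the non-squarefree case (the paper's chain of equalities divides by $\mu(q)$, which vanishes there), and the convergence of $\sum_{n\geq1}\mu(qn)/n$ at $s=1$ itself, which you correctly extract from Davenport's estimate \eqref{Bateman} by partial summation before invoking the Abelian Lemma~\ref{Cahen} --- without that prior convergence, Cahen--Jensen alone does not yield part (1).
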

Indeed
\[\frac{1}{\zeta(s)}= \sum_{ n \geq 1} \frac{ \mu(n)}{n^s}= \prod_{p}(1-p^{-s})= \prod_{p\vert q}(1-p^{-s}).\prod_{p\nmid q}(1-p^{-s})=  \frac{ \Phi_s(q)}{q^s} 
  \sum_{ n \geq 1,\,(n, q)=1} \frac{ \mu(n)}{n^s}  \]
  \[ \frac{ \Phi_s(q)}{\mu(q)q^s} 
  \sum_{ n \geq 1,\,(n, q)=1} \frac{ \mu(qn)}{n^s}.  \]
  The lemma is  obtained by using the lemma \eqref{Cahen}, or the  classical Ingham's Tauberian theorem \cite{Korevaar} (p.133).
One can show that
\[  \sum_{ n \in \delta(k,m) \N^*} \frac{\mu(n)}{n}= \mu_{\delta(k,m)}  \lim_{ s \to 1} \left( \frac{1}{ \zeta(s) \prod_{ p \vert \delta(k,m)}} (1- \frac{1}{p^s})\right)^{-1} =0.\]
It is easily seen that $\displaystyle \sum_{1 \leq h \leq m} \gamma_{\frac{h}{m}} (k) =0$. Hence
\[
\sum_{1 \leq h \leq m} {\mathscr M}_1( e^{2i \pi \frac{hk}{m}})=0.
\]
From the relation  $\displaystyle \mu \star  \sum_{1 \leq h \leq m} \hat{g}_{ \frac{h}{m}} =0$ we conclude again, by Cahen-Jensen lemma and M\"obius inversion, that
\[
\lim_{s \to 1} \; \sum_{j \geq 1} \frac{c_{jm}(k)}{j^s} = \sum_{j \geq 1} \frac{c_{jm}(k)}{j}=0.
\]
Indeed  we have $\displaystyle   \sum_{1 \leq h \leq m} \hat{g}_{ \frac{h}{m}} =0$, so for every  $k \in \N^*$
\[
\lim_{s \to 1} \; m^{1-s} \sum_{j \geq 1} \frac{c_{jm}(k)}{j^s} = \lim_{s \to 1} \; \sum_{j \geq 1} \frac{c_{jm}(k)}{j^s} = 0
\]
since
\[
\sum_{1 \leq h \leq m} \sum_{n \geq 1} \frac{c_n(k)}{n^s} e^{2 i \pi \frac{nk}{m}} = m^{1-s} \sum_{ j \geq 1} \frac{ c_{jm}(k)}{j^s}.
\]
We also have
\[
 \lim_{s \to 1} \sum_{1 \leq h \leq m} {\mathscr C}_{s,l} (e^{2i \pi \frac{ h}{m}} ) = \sum_{ d \vert l} \left(\sum_{1 \leq h \leq m} {\mathscr M}_1(E^{2i \pi \frac{ hd}{m}})\right) =0
 \]
 by using
 \[
{\mathscr C}_{s, l} (z) =  \sum_{d \vert l} d^{1-s} {\mathscr M}_s(z^d).
\]

This lemma, applied to  $\displaystyle \gamma_{ \frac{h}{m}(k)} = {\mathscr M}_1(e^{2i\pi \frac{hk}{m}})$ and with   $\displaystyle \delta(k,m) = \frac{\lcm (k,m)}{k}$, gives
\[
\sum_{1 \leq h \leq m} {\mathscr M}_1(e^{2i \pi \frac{hk}{m}}) = m \sum_{j \geq1} \frac{ \mu_{j\delta(k,m)}}{j \delta(k,m)} = \frac{m}{\delta(k,m)}  \sum_{j \geq1} \frac{ \mu_{j \delta(k,m)}}{j} = 0.
\]
Thus we have a nontrivial example of a function solving the initial  Besicovich question.
A natural question suggested by the lemma \eqref{subseries} is: Find all the sequences  $a= (a_n)_{n \geq1}$ satisfying  the relations $\displaystyle \sum_{j \geq 1} a_{jm}=0 $ for every $m \in \N^*$. According to \cite{H.Q} (p.294) if
\[\sum_{n=1}^{\infty} \vert a_n\vert< \infty, \quad \sum_{n=1}^{\infty} a_{jn}= 0,\, j=1,2,\cdots \]
then  $a_n=0$ for every $n\geq 1$. As was pointed out in  \cite{H.Q} (p.294) the absolute convergence is necessary. We give here an example \cite{McCarthy}   (p.219) completing 
\eqref{subseries2} and showing the necessity of the absolute convergence. It is
\begin{equation}\label{subseries3}
\sum_{\substack {m=1\\(m,n)=1}}^{\infty} \frac{\vert \mu(m)\vert}{m^s}= \frac{n^s \zeta(s)}{\psi_n(s)\zeta(2s)},
\end{equation}
with
\[ \psi_n(s)= \sum_{d\vert n} d^s \vert\mu(\frac{n}{d}) \vert.\]
\\
It is interesting  to consider the following question later a more general question: For which sequences $a = (a_n)_{ n \geq 1} \in l^2 $ we have
\[
\lim_{\substack{ s \to 1\\  \Re s>1}}  \frac{1}{m^{ s-1}} \sum_{n \geq 1} a_n \left ( \sum_{j \geq1} \frac{\mu_{j \delta(m,k)}}{j^s} \right )=0.
\]
As we saw in \eqref{subseries} we have  $\displaystyle \lim_{\substack{ s \to 1\\  \Re s >1}}  \sum_{j \geq1} \frac{\mu_{j \delta(m,k)}}{j^s} =0$ for every $k \geq 1$. 
 \section{Three examples of Hilbert spaces}
\subsection{Preliminaries}
We propose to introduce a binary operation $ \otimes $  to combine two  power series. Let $D$ be the open unit disk and
\[ H_0^2 (D)= \left\{\sum_ {n \geq 1} a_n z^n,\, a_n\in \BC,\, \sum_ {n \geq 1} \vert a_n \vert^2 <\infty \right\}. \]
Let ${\bf H}$ the space of functions defined almost everywhere on $\BR$, odd and 2-periodic, such that  $f_{\vert (0,1)}\in  L^2 (0,1) $ (that is we will consider only  Fourier sine series expansion of any  $f\in L^2 (0,1)$)  and finally let \[ H^2= \left\{\sum_ {n \geq 1} \frac {a_n} {n ^ s},\, a_n\in \BC, \, \sum_ {n \geq 1} \vert a_n 
\vert^2 <\infty \right\}. \] It is remarkable that from the Hilbert point of view these three spaces are isomorphic but the analytical properties are different but enrich each other. The power series  ${\mathscr L}_s $ and $ {\mathscr M}_s $ belong to  $H_0^2 (D)$ provided that $\Re s> \frac{1}{2} $ and will play a preponderant role. We need the following definition \cite{Spira}
\begin{definition}
Let $\displaystyle A(z)= \sum_{p=1}^{\infty} a_pz^p  $ and $\displaystyle B(z)= \sum_{p=1}^{\infty} b_pz^p  $ be two power series. We define their Dirichlet product as
\[
\sum_{p \geq 1} a_p z^p \otimes \sum_{p \geq 1} b_p z^p = \sum_{p \geq 1} a_p ( \sum_{q \geq 1} b_q z^{qp}) =\sum_{q \geq 1} b_q( \sum_{p \geq 1} a_p z^{pq}) = \sum_{n \geq 1} (a \star b)_n z^n
\]
where $a \star b$ stands for the Dirichlet convolution of the sequences  $(a_n)_{n\geq 1}$  and $(b_n)_{n\geq 1}$. It is clear that the identity element for the binary operation $\otimes $ is $e(z)= z$.
\\
It  should be noted that this product comes from the natural formal product of the two Dirichlet series $\displaystyle \sum_{n=1}^{\infty} \frac{a_n}{z^n}  $ and $\displaystyle \sum_{n=1}^{\infty} \frac{b_n}{z^n}  $. In other words the map
\begin{equation}\label{Spira}
S: \left(H_0^2 (D), +, \otimes \right) \longrightarrow \left({\bf H}, +, . \right), \quad S(\sum_{n=1}^{\infty} a_nz^n)= \sum_{n=1}^{\infty} \frac{a_n}{z^n} 
\end{equation}
is a  ring homomorphism.
\end{definition}
It is possible to define, by transfert of $\otimes $ by the map $S$, a product on the set of Lambert series. But, instead, we give few examples, in particular to show how to compute ${\mathscr M}_0(z)\otimes {\mathscr M}_0(z)$  and evoke the problem that $ {\mathscr C}_{s, l} (z)\otimes {\mathscr C}_{s, l} (z)$         poses.
 \\
First, we have two useful properties 
  \begin{enumerate}
 \item ${\mathscr L}_s(z^m) \otimes {\mathscr M}_s(z^n) = z^{mn},\quad m, n \in \N^*$.
 \item The functions ${\mathscr L}_s(z)$ and ${\mathscr M}_s(z)$  are mutual inverses for the operation $\otimes $. 
 \end{enumerate}
Second,  According to \cite{McCarthy} (p.40) we introduce $d(n, k)$  the number of ways of expressing $n$ as a product of $k$ positive factors (of which any number may be unity), expressions in which the order of the factors is different being regarded as distinct. It  is a multiplicative function satisfying the functional equation
\[ d(n, k+1)= \sum_{d\vert n}  d(n, k). \]
In particular, $d(n, 2)= d(n)= \displaystyle \sum_{d\vert n}1$, the number of divisors of $n$. By a simple induction we have
\[ \smash[b]{ \underbrace{{\mathscr L}_s(z) \otimes\cdots \otimes\,{\mathscr L}_s(z)}_\text{$k$ times}}= \sum_{n=1}^{\infty} \frac{d(n, k)}{n^s}z^n.\]

Third, we  compute
the square of the M\"{o}bius function $\mu$ for the convolution. For any real number $\alpha$, we denote by $\mu_{\alpha}$ the multiplicative function defined for all primes $p $ and positive integer $k$ by \cite{Dickson}
\[ \mu_{\alpha} (1)= 1,\quad   \mu_{\alpha} (p^k)= (-1)^k\binom{\alpha}{k},   \]
with
\[\binom{\alpha}{k}= \frac{\alpha (\alpha-1)\cdots (\alpha -k+1)}{k!}. \]
Then $\mu_1 = \mu$, the M\"{o}bius function, $\mu_{-1} = 1$, the constant arithmetical function $1$, and $\mu_0= e$ with $ e(1)=1, e(n)= 0$ if $n>1$, the neutral element for the Dirichlet convolution. The function $\mu_{\alpha}$ may be defined even for complex $\alpha$ since it is a polynomial in  $\alpha$ \cite{Brown}. It satisfies
\[ \mu_{\alpha + \beta}= \mu_{\alpha}\star \mu_{\beta}\]
for all real numbers $\alpha$ and $ \beta$. Let $n_a$ be the number of simple prime divisors of $n$, that is those primes whose square do not divide 
$n$, then
\[\mu \star \mu(n)=(- 2)^{n_a}. \]
For sake of completeness we give a very quick proof of this result. Since $\mu \star \mu$ is multiplicative, it is enough to know $\mu \star \mu(p^e)$ for a prime $p$. But
\begin{align*}
\mu \star \mu(p^e)&= \sum_{k=0}^e \mu(p^k)\mu(p^{e-k})\\
&=  \mu(p^e)+ \mu(p)\mu(p^{e-1})+\cdots+ \mu(p^{e-1})\mu(p)+\mu(p^e).
\end{align*}
If $e\geq 3$ and $0\leq k\leq e$, one of the integers $k,\,e-k$ is greater than $2$, so $\mu(p^k)\mu(p^{e-k})=0$ and $\mu(p^e)=0$.\\
If $e=2$
\[\mu \star \mu(p^2)=  \mu(p^2)+\mu(p)\mu(p)+\mu(p^2)= \mu(p)\mu(p)=1. \]
If $e= 1, \mu(p)= -1, \mu \star \mu(p)=  \mu(p)+  \mu(p)=-2$. So only the simple prime divisors of $n$ contribute, each by $-2$. This proves the formula above. It follows that
\begin{proposition}
\[  {\mathscr M}_0(z)\otimes {\mathscr M}_0(z)= \sum_{n=1}^{\infty}    (- 2)^{n_a}  z^n .              \]
\end{proposition}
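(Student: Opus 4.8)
The plan is to unwind the definition of the Dirichlet product $\otimes$ and then invoke the convolution identity for $\mu\star\mu$ that was just established above the statement.

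First I would recall that, specializing $s=0$ in the definition of $ {\mathscr M}_s $, the power series $ {\mathscr M}_0(z)=\sum_{k\geq 1}\mu_k z^k $ has coefficient sequence exactly $(\mu_n)_{n\geq 1}$, the M\"{o}bius function. By the definition of the Dirichlet product of two power series,
\[
 {\mathscr M}_0(z)\otimes {\mathscr M}_0(z)=\sum_{p\geq 1}\mu_p\Bigl(\sum_{q\geq 1}\mu_q z^{pq}\Bigr)=\sum_{n\geq 1}(\mu\star\mu)_n\, z^n,
\]
where $\star$ denotes the Dirichlet convolution. The only point deserving a word of justification is the rearrangement of the iterated sum into a single power series in $z$: since $|\mu_k|\leq 1$ for every $k$, the iterated sum of absolute values is dominated by $\sum_{p,q\geq 1}|z|^{pq}=\sum_{n\geq 1}d(n)|z|^n$, which is finite for $|z|<1$ (here $d(n)$ is the number of divisors of $n$). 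Hence for $|z|<1$ the rearrangement is legitimate, and the displayed identity is an equality of analytic functions on the open unit disc; alternatively, one may read the whole statement as a formal identity of power series, in which case this step is automatic.

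Finally I would quote the evaluation obtained just before the Proposition: from the multiplicativity of $\mu\star\mu$ together with the elementary computation $\mu\star\mu(p^e)=-2,\,1,\,0$ according as $e=1$, $e=2$, or $e\geq 3$, one has $\mu\star\mu(n)=(-2)^{n_a}$, where $n_a$ is the number of simple prime divisors of $n$ (the primes $p\mid n$ with $p^2\nmid n$). Substituting $(\mu\star\mu)_n=(-2)^{n_a}$ into the power series above yields
\[
 {\mathscr M}_0(z)\otimes {\mathscr M}_0(z)=\sum_{n=1}^{\infty}(-2)^{n_a}\,z^n,
\]
as claimed.

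\textbf{Main obstacle.} There is essentially none of substance: the arithmetic content (the closed form for $\mu\star\mu$) is already in hand, and the $\otimes$ operation is by construction the transport of Dirichlet convolution through the map $S$ of \eqref{Spira}, so the identity is little more than a translation. The one genuinely technical point is the absolute-convergence argument legitimizing the interchange of summations, which I would dispatch with the bound by $\sum_n d(n)|z|^n$ as above.
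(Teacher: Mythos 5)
Your proposal follows essentially the same route as the paper: the paper likewise reduces the Proposition to the identity $\mu\star\mu(n)=(-2)^{n_a}$, established via multiplicativity and the prime-power values $\mu\star\mu(p^e)=-2,\,1,\,0$ for $e=1,\,2,\,\geq 3$, and then reads off the coefficients from the definition of $\otimes$; your absolute-convergence justification of the rearrangement for $\vert z\vert<1$ is a small addition the paper leaves implicit (it is in any case covered by Spira's lemma). Note, however, a caveat you inherit from the paper itself: the closed form $(-2)^{n_a}$ agrees with $\mu\star\mu(n)$ only for cube-free $n$ (for $n=8$ one has $n_a=0$, so $(-2)^{n_a}=1$, whereas $\mu\star\mu(8)=0$), so strictly speaking both your statement and the paper's should carry the convention that the coefficient vanishes whenever $p^3\mid n$ for some prime $p$.
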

This shows the surprising and not so known  formula
\[ \frac{1}{\zeta^2(s)}=   \sum_{n=1}^{\infty} \frac{(- 2)^{n_a}}{n^s}.\]
The same method gives
\[{\mathscr M}_s(z)\otimes {\mathscr M}_s(z)= \sum_{n=1}^{\infty}    \frac{(- 2)^{n_a}}{n^s}  z^n.         \]
If we set
\[ \smash[b]{ \underbrace{{\mathscr M}_s(z) \otimes\cdots \otimes\,{\mathscr M}_s(z)}_\text{$k$ times}}= \sum_{n=1}^{\infty} \frac{d'(n, k)}{n^s}z^n\]
we get from the equality $\displaystyle  \frac{1}{\zeta^{k+1}(s)}=     \frac{1}{\zeta^{k}(s)}  \frac{1}{\zeta(s) }      $ the relation
\[ d'(n, k+1)= \sum_{d'\vert n}  d'(n, k)\mu(k),\quad d'(n, 2)=  (- 2)^{n_a} = \mu \star \mu(n). \]
As far as we know the map $n\rightarrow d'(n, k)$ does not seem to have been studied to the point like what we have, for example, in the estimate \eqref{Bateman}.
\begin{remark} The situation for the series  $ {\mathscr C}_{s, l} (z) $ is not as manageable as it is for  ${\mathscr L}_s(z)  $  and  $ {\mathscr M}_s(z) $. The
product $ {\mathscr C}_{s, l} (z)\otimes {\mathscr C}_{s, l} (z)$ is not apparently easy to compute, as we have 
 \[c_{q_1}(n)c_{q_2}(n)= c_{q_1 q_2}(n)\]
only when $q_1,q_2$ are relatively prime. We modify the binary operation $\otimes$ by defining for two arithmetical function $f=f(n), g=g(n)$ the
 following operation \cite{McCarthy} (p.154)
\[(f \sqcup g)_n= \sum_{\substack {pq= n\\(p,q)= 1}} f(p)f(q), \]
known as the unitary product,
and extend it to powers series by
 \[\sum_{n\geq 1} f(n)z^n\boxtimes   \sum_{n\geq 1} g(n)z^n= \sum_{n\geq 1} (f\sqcup g)_n z^n.
\]
With $\displaystyle f(n)= \frac{ c_n(l)}{n^s}     $ we get
\[ (f \sqcup f)_n= \sum_{\substack {pq= n\\(p,q)= 1}}f(p) f(q)= {\tilde d}(n)\frac{c_n(s)}{n^s}, \]
where $\displaystyle {\tilde d}(n)= \sum_{(p,q)=1,\, pq= n} 1$, so that
 \[{\mathscr C}_{s, l} (z)\boxtimes {\mathscr C}_{s, l} (z)= \sum_{n \geq 1} c_n(l)  \frac{z^n}{n^s}\boxtimes \sum_{n \geq 1} c_n(l)  \frac{z^n}{n^s} 
= \sum_{n \geq 1} {\tilde d}(n) c_n(l)  \frac{z^n}{n^s}.  \]
The arithmetical function $ {\tilde d}(n)$ is known as the unitary divisor function. It coincide with $d(n)$  if $n$ is square free.
\end{remark} 
To understand the Hilbert space structure of $H_0^2 (D)$ we recall that the Bergman space $B(D)$ is the space of holomorphic functions $f$ in $D$ for which the integral
\[(f,f)= \int \int_{D} \vert f(z)\vert^2 dx\,dy <\infty. \]
The system of functions $\{1, z, z^2,\cdots \}$ is an orthogonal set. Indeed we have
\[(z^n, z^m)= \int \int_{\vert z\vert<1}  z^n {\bar z}^m dx dy= \frac{1}{2i(m+1)} \int_{\vert z\vert=1}z^n {\bar z}^{m+1} dz=  \frac{1}{2(m+1)} \int_{0}^{2\pi}
e^{i(n-m)\theta}\,d\theta. \]
The orthonormalized set is 
\[v_n(z)= \sqrt{\frac{n+1}{\pi}}z^n.\]
The Fourier coefficients of $f\in B(D)$,
\[f(z)=  a_0+a_1z+a_2z^2+\cdots\] are
\[b_n=  \sqrt{\frac{n+1}{\pi}} \int \int_{\vert z\vert<1} f(z) {\bar z}^n dx dy= \lim_{r\to 1}\sqrt{\frac{n+1}{\pi}} \int \int_{\vert z\vert<r} f(z) {\bar z}^n dx dy= 
     \sqrt{\frac{\pi}{n+1}} a_n        \]
     so that  the norm given in the space $  H_0^2 (D)\subset B(D) $ can be written in terms of the Fourier coefficients
     \[\sum_{n=0}^{\infty}\vert a_n\vert^2= \pi  \sum_{n=0}^{\infty} \frac{\vert b_n\vert^2}{n+1}.\]
The following lemma from \cite{Spira}  is the analogue of the classical Cauchy's theorem for the  new binary operation   $\otimes$ 
\begin{lemma}[Spira]
If $\displaystyle f (z) = \sum_ {p \geq 1} a_pz^p $ and $ \displaystyle g(z) = \sum_ {p \geq 1} b_pz^p $ are two holomorphic functions on the open disk $ D $ then so is $ f \otimes g $. Furthermore if $R_f, R_g, R_{f\otimes g}$ are the radius of convergence of $f,g, f\otimes g$ repectively, then
\[\min(1, R_f, R_g)\leq  R_{f\otimes g}.\]
\end{lemma}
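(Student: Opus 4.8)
The plan is to argue entirely at the level of coefficients. By the definition of $\otimes$ the coefficient of $z^n$ in $f\otimes g$ is the Dirichlet convolution $(a\star b)_n=\sum_{d\mid n}a_d\,b_{n/d}$, so everything reduces to controlling the growth of $(a\star b)_n$. First I would note that since $f$ and $g$ are holomorphic on $D$ their radii of convergence satisfy $R_f\geq 1$ and $R_g\geq 1$, so $\min(1,R_f,R_g)=1$; hence the radius inequality, once proved, forces $R_{f\otimes g}\geq 1$, and a power series is holomorphic inside its disc of convergence, which gives the first assertion.

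For the radius estimate I would fix an arbitrary $\rho$ with $0<\rho<\min(1,R_f,R_g)$. Because $\rho<R_f$ and $\rho<R_g$, Cauchy--Hadamard yields a constant $C$ with $|a_n|\leq C\rho^{-n}$ and $|b_n|\leq C\rho^{-n}$ for all $n\geq 1$. Then
\[
|(a\star b)_n|\;\leq\;\sum_{d\mid n}|a_d|\,|b_{n/d}|\;\leq\;C^2\sum_{d\mid n}\rho^{-(d+n/d)}.
\]
Now two elementary observations finish the bound: if $d\mid n$ then $d+n/d\leq n+1$ (the maximum over divisors being attained at $d=1$ and $d=n$), and since $\rho<1$ the map $x\mapsto\rho^{-x}$ is increasing, so every summand is at most $\rho^{-(n+1)}$; moreover $n$ has at most $n$ divisors. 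Hence $|(a\star b)_n|\leq C^2\,n\,\rho^{-(n+1)}$.

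Taking $n$-th roots, $(C^2n)^{1/n}\to 1$ and $\rho^{-(n+1)/n}\to\rho^{-1}$ as $n\to\infty$, so $\limsup_n|(a\star b)_n|^{1/n}\leq\rho^{-1}$, that is $R_{f\otimes g}\geq\rho$; letting $\rho$ increase to $\min(1,R_f,R_g)$ gives the claim. The step needing the most care---and the reason the constant $1$ cannot be dropped from $\min(1,R_f,R_g)$---is the monotonicity estimate: it is exactly the hypothesis $\rho<1$ that lets one bound $\rho^{-(d+n/d)}$ by $\rho^{-(n+1)}$ uniformly in $d\mid n$. If instead $\rho>1$ the same computation only gives $|(a\star b)_n|\leq C^2 n$ and hence $R_{f\otimes g}\geq 1$, which is harmless because in that range $\min(1,R_f,R_g)=1$ anyway. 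One could sharpen the factor $n$ to the divisor function $d(n)$ via $d(n)=n^{o(1)}$, but the cheap bound $d(n)\leq n$ is all that is needed.
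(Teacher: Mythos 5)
Your argument is correct and rests on the same key observation as the paper's proof, namely that $d + n/d \leq n+1$ for every divisor $d$ of $n$, so that inside the unit disk one may dominate $\rho^{-(d+n/d)}$ (equivalently, $|z|^n \leq |z|^{d}|z|^{n/d}$) uniformly over divisors. The only difference is one of packaging: you convert this into a Cauchy--Hadamard bound on the convolved coefficients $(a\star b)_n$ and apply the root test, whereas the paper dominates the partial sums of $f\otimes g$ directly by the Cauchy product of the two absolute series; both routes give $R_{f\otimes g}\geq \min(1,R_f,R_g)$.
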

\begin{proof} For every fixed $n \geq 3 $  and $2 \leq p \leq n-1 $ we have  $\displaystyle p+ \frac{n}{p} \leq n$ for  each divisor $p$ of $n$. In fact It suffices to show it when
$\displaystyle 2 \leq p \leq \frac{n}{2}$, and in this case $\displaystyle p+ \frac{n}{p} \leq \frac{n}{2}+\frac{n}{2}= n$. For  $\vert z \vert  \leq1$ we have $\displaystyle \vert z \vert^n\leq \vert z \vert^p \vert z\vert^{ \frac{n}{p}} $ and for large $N $:
\begin{equation}
 \begin{split}
\sum_{2 \leq n \leq N} \Big ( \sum_{p\vert n,\; 2 \leq p \leq n-1} \vert a_p \vert \vert b_{ \frac{n}{p}} \vert \Big) \vert z \vert^n & \leq 
\sum_{2 \leq n \leq N} \Big ( \sum_{\substack{p\vert n\\2 \leq p \leq n-1}} \vert a_p \vert \vert b_{ \frac{n}{p}} \vert \Big) \vert z \vert^p \vert z\vert^{ \frac{n}{p}} \\
\\
& \leq \big ( \sum_{2 \leq p \leq N-1} \vert a_p \vert \vert z \vert^p \Big) \Big (\sum_{2 \leq q \leq N-1} \vert b_q \vert \vert z \vert^q\Big)\\
\\
& \leq \big ( \sum_{p \geq 1} \vert a_p \vert \vert z \vert^p \Big) \Big(\sum_{q \geq 1} \vert b_q \vert \vert z \vert^q\Big).
\end{split} \end{equation}
This means that for large $N$:
\begin{equation} \begin{split}
{}&\Big ( \sum_{1 \leq p \leq N} \vert a_p \vert \vert z \vert^p \big) \otimes \Big ( \sum_{1 \leq  q \leq N} \vert b_q \vert \vert z \vert^q \Big)\\ &  \leq \vert a_1 \vert \sum_{q \geq 1} \vert b_q \vert \vert z \vert^q + \vert b_1 \vert \sum_{p \geq 1} \vert a_p \vert \vert z \vert^p  + \Big (\sum_{1 \leq p \leq N}\vert a_p \vert \vert z \vert^p \big ) \Big (\sum_{1 \leq q \leq N}\vert b_q \vert \vert z \vert^q\Big )\\
{} &  \leq \vert a_1 \vert \sum_{q \geq 1} \vert b_q \vert \vert z \vert^q + \vert b_1 \vert \sum_{p \geq 1} \vert a_p \vert \vert z \vert^p  + \Big (\sum_{p \geq 1}\vert a_p \vert \vert z \vert^p \big ) \Big (\sum_{q \geq 1}\vert b_q \vert \vert z \vert^q \Big ).
\end{split}
 \end{equation}
Hence the conclusion.
\end{proof}
 
 \subsection{Historic facts}
\begin{enumerate}
\item Around 1944 A. Wintner \cite{Wintner} shows that for $ u(t)= \{t \} $, the sequence $ u(nt ) $ is total in $ L^2 (0, \frac {1}{2}) $ and observes, for the eventual totality of a sequence $ (\varphi (nt)) _ {n \geq 1}$, the possibility to express the conditions in terms of the M\"obius inversion. He uses the associated Dirichlet series and  shows that  the sequence of dilates $ \varphi _{\tau} (nt) $ for $\displaystyle \varphi_\tau (t)= \sqrt {2} \sum_ {n \geq 1} \frac {\sin nt} {n^\tau} $ with $ \Re \tau> \frac{1}{2} $ is total in $\in  L^2 (0,1). $
\item In 1945 A. Beurling considers  the problem of deciding if the system of the dilates $ (\psi (nt))_{n \geq 1} $  of a function $ \psi \in L^2 (0,1) $ is a total system in $ L^2 (0,1) $. To the development $\displaystyle \sum_ {n \geq 1} a_n \varphi (nt) $ of the function $ \psi $
in the basis $ \varphi_n (t)= \sqrt {2} \sin \pi nt = \varphi (nt) $ with $ \varphi (t)= \sqrt {2} \sin \pi t $, he associates the Dirichlet series $\displaystyle  f(s) = \sum_ {n \geq 1} \frac {a_n} {n^s} $, which converges for $\Re s> \frac{1}{2} $, and studies the initial problem using the  properties of the Dirichlet series $ f $.
\item 
From 1990, very intensive research was carried out to link and exploit the correspondence between the two aspects 
(Fourier series $F \xleftrightarrow{{S}} G$ Dirichlet series) \cite{H.Q}, \cite {HLS},
 \cite{Bayart}, \cite{Nikolski} , \cite{Jaffard}, \cite{Breteche} $\cdots $ and the references therein.
\end{enumerate}

In particular we quote from \cite{HLS} the following
\begin{theorem} Let $\varphi \in L^2(0,1) $ having the following Fourier expansion \[\varphi(t) = \sqrt{2} \sum_{n \geq1} a_n \sin ( \pi nt),\] then the following are equivalent:
\begin{enumerate}
\item The sequence  of dilates $\displaystyle(\varphi_n)_{n \geq 1} $ of $\displaystyle \varphi$ form a Riesz basis of $L^2\left(0,1\right)$.
\item The generating function $\displaystyle  S \varphi(s)= \sum_{n \geq 1} \frac{a_n}{n^s}$ belongs to 
\[\mathcal{ H}^\infty = {\bf H}^\infty (\{ s \in \BC, \Re s >0 \}) \cap  \mathcal{D} \] as well as its reciprocal $\displaystyle \frac{1}{ S \varphi(s)} $, $ \mathcal {D} $ is the ring of convergent Dirichlet series on the half plane $\{ s \in \BC, \Re s >0 \}$. 
\end{enumerate}
In particular the dilates of  $\varphi_\tau$ form a Riesz basis of  $L^2(0,1) $ if and only if $ \Re \tau >1$. In this case $\displaystyle  S \varphi(s)= \zeta(s+ \overline{\tau})= \displaystyle \sum_{ n \geq 1} \frac{1}{n^{s+ \overline{\tau}}} $  and $S^{-1} (s)=  \displaystyle\sum_{n \geq 1} \frac{ \mu(n)}{n^{s+ \overline{\tau}}}$. 
\end{theorem}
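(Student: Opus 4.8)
The plan is to transport the whole question through the unitary identification of $L^2(0,1)$ with the Hardy space $H^2$ of Dirichlet series, under which dilation becomes Dirichlet multiplication. Since $\{\sqrt{2}\sin(\pi nt)\}_{n\geq1}$ is an orthonormal basis of $L^2(0,1)$ and $\sum_{n\geq1}|a_n|^2<\infty$, the assignment $\sqrt{2}\sin(\pi nt)\mapsto n^{-s}$ extends to a unitary map $S\colon L^2(0,1)\to H^2$ (reading $L^2(0,1)$ as $\mathbf{H}$), carrying $\varphi$ to $S\varphi(s)=\sum_{n\geq1}a_n n^{-s}$; this is the map $S$ of the theorem, the analogue for the pair $(L^2(0,1),H^2)$ of the ring homomorphism \eqref{Spira}. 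First I would record the intertwining identity: writing out $\varphi_k(t)=\varphi(kt)=\sqrt{2}\sum_{n\geq1}a_n\sin(\pi nkt)$ and regrouping, the $m$-th sine coefficient of $\varphi_k$ is $a_{m/k}$ when $k\mid m$ and $0$ otherwise (so $\varphi_k\in L^2(0,1)$, with the same norm as $\varphi$), whence
\[
S\varphi_k(s)=\sum_{k\mid m}a_{m/k}\,m^{-s}=\sum_{n\geq1}a_n(kn)^{-s}=k^{-s}\,S\varphi(s).
\]
Equivalently, in the power series model $\varphi_k$ corresponds to $z^k\otimes\big(\sum_{n\geq1}a_n z^n\big)$, and $S$ turns $\otimes$ into ordinary multiplication of Dirichlet series. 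Setting $f:=S\varphi$, the family of dilates $(\varphi_k)_{k\geq1}$ is carried by $S$ onto $(k^{-s}f)_{k\geq1}\subset H^2$, so $(\varphi_k)_{k\geq1}$ is a Riesz basis of $L^2(0,1)$ if and only if $(k^{-s}f)_{k\geq1}$ is a Riesz basis of $H^2$.

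Next I would turn the Riesz-basis condition into a statement about one multiplication operator. For an orthonormal basis $(b_k)$ of a Hilbert space, a family $(v_k)$ is a Riesz basis precisely when the correspondence $b_k\mapsto v_k$ extends to a bounded, boundedly invertible operator. Here $(b_k)=(k^{-s})_{k\geq1}$ and $v_k=k^{-s}f$: on Dirichlet polynomials the correspondence is $\sum c_k k^{-s}\mapsto f\cdot\sum c_k k^{-s}$, i.e.\ multiplication by $f$; and any bounded operator on $H^2$ sending each $k^{-s}$ to $k^{-s}f$ commutes with every (isometric) operator $M_{n^{-s}}$, hence --- applying it to $1^{-s}$ and then to $M_{n^{-s}}1^{-s}$ --- agrees with the multiplication operator $M_f$ on the dense subspace of Dirichlet polynomials. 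Thus $(k^{-s}f)_{k\geq1}$ is a Riesz basis of $H^2$ iff $M_f$ is a bounded invertible operator on $H^2$. If $M_f$ is invertible, its inverse again commutes with all $M_{n^{-s}}$ and so equals $M_g$ for some $g$, and $M_f M_g=I$ forces (evaluate at $1^{-s}$) $fg\equiv1$, i.e.\ $g=1/f$; conversely, if $f$ and $1/f$ both multiply $H^2$ into itself then $M_{1/f}$ is a two-sided inverse of $M_f$. Hence $(1)$ is equivalent to: both $f=S\varphi$ and $1/f$ are multipliers of $H^2$.

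The crux is the identification of the multiplier algebra of $H^2$, which I would take directly from Hedenmalm--Lindqvist--Seip \cite{HLS}: a Dirichlet series $f$ multiplies $H^2$ into itself if and only if $f\in\mathcal{H}^\infty=\mathbf{H}^\infty(\{\Re s>0\})\cap\mathcal{D}$, and then $\|M_f\|=\sup_{\Re s>0}|f|$. The substantive content --- which I would cite rather than reprove --- is the Bohr lift identifying $H^2$ with the Hardy space of the infinite-dimensional polytorus: multipliers there are exactly the bounded functions, and Kronecker--Weyl equidistribution of vertical lines makes the polytorus sup-norm coincide with the supremum over $\{\Re s>0\}$ (in particular a multiplier is automatically a convergent Dirichlet series on $\{\Re s>0\}$, which is why $\mathcal{D}$ appears). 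Combined with the previous step this yields the equivalence $(1)\Leftrightarrow(2)$. This multiplier theorem is the only genuine obstacle; everything around it is soft functional analysis.

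Finally, for $\varphi_\tau$ a direct computation from the definitions of $S\varphi$ and of $\varphi_\tau$ gives $S\varphi_\tau(s)=\sum_{n\geq1}n^{-(s+\overline{\tau})}=\zeta(s+\overline{\tau})$. Now $\zeta$ is holomorphic and bounded (by $\zeta(\sigma_0)$) on every half-plane $\{\Re w\geq\sigma_0\}$ with $\sigma_0>1$, while it has a simple pole at $w=1$ and is unbounded on $\{\Re w>1\}$; hence the translate $\zeta(s+\overline{\tau})$ belongs to $\mathcal{H}^\infty$ exactly when $\{\Re s>0\}$ is carried into some half-plane $\{\Re w>\sigma_0\}$ with $\sigma_0>1$, that is, exactly when $\Re\tau>1$. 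When $\Re\tau>1$ the reciprocal is automatically admissible: $1/\zeta(s+\overline{\tau})=\sum_{n\geq1}\mu(n)n^{-(s+\overline{\tau})}$ converges absolutely on the closed half-plane $\{\Re s\geq0\}$, with $|1/\zeta(s+\overline{\tau})|\leq\zeta(\Re\tau)<\infty$, so it too lies in $\mathcal{H}^\infty$. Therefore condition $(2)$ holds for $\varphi_\tau$ precisely when $\Re\tau>1$, which by the equivalence just proved is precisely when the dilates of $\varphi_\tau$ form a Riesz basis of $L^2(0,1)$; and in that case $S\varphi_\tau(s)=\zeta(s+\overline{\tau})$ while $S^{-1}(s)=1/S\varphi_\tau(s)=\sum_{n\geq1}\mu(n)n^{-(s+\overline{\tau})}$.
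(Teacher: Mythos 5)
Your argument is correct: the paper itself states this theorem as a direct quotation from Hedenmalm--Lindqvist--Seip \cite{HLS} and offers no proof of its own, and your reduction --- unitarity of $S$, the intertwining $S\varphi_k(s)=k^{-s}S\varphi(s)$, the translation of the Riesz-basis condition into boundedness and invertibility of the multiplication operator $M_{S\varphi}$ on $\mathcal{H}^2$, and the appeal to the HLS identification of the multiplier algebra with $\mathcal{H}^\infty$ --- is precisely the standard route of that reference. The one genuinely deep ingredient (the Bohr-lift multiplier theorem) is correctly isolated and cited rather than reproved, which matches the paper's own level of detail, and the concluding computation for $\varphi_\tau$ (boundedness of $\zeta(s+\overline{\tau})$ and of its reciprocal on $\Re s>0$ exactly when $\Re\tau>1$) is also correct.
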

Furthermore
\begin{lemma}
The  three rings $ \C([[z]]), \mathcal {O} (D) $ and $ \mathcal {O} _0 $  equipped with the binary composition $ \otimes $ are commutative ring, with neutral element $ z $. The ring of arithmetical functions, equipped with Dirichlet convolution,  is an  integral domain, factorial, local and isomorphic to the ring of Dirichlet series. The same is true of the ring $ \mathcal {D} $. 
\end{lemma}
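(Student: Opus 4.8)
\textbf{The $\otimes$-rings.} The statement splits into three blocks, which I would treat in increasing order of difficulty. For the first, read at the purely formal level $S\colon\sum_{n\ge1}a_nz^n\mapsto\sum_{n\ge1}a_nn^{-s}$ is a bijection from $\C([[z]])$ onto the ring of formal Dirichlet series, and --- this is exactly the computation made in the definition of $\otimes$ --- it carries $\otimes$ to the ordinary (Cauchy) product of Dirichlet series, which on coefficients is the Dirichlet convolution $\star$. Since $\star$ is commutative, associative and distributes over $+$ (a one-line check on divisor sums), the same holds for $\otimes$, and $S^{-1}$ of the unit $1\cdot1^{-s}$ is $e(z)=z$; hence $(\C([[z]]),+,\otimes)$ is a commutative ring with unit $z$. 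To descend to $\mathcal O(D)$ and $\mathcal O_0$ I would invoke the quoted Spira lemma: it shows that $f\otimes g$ stays holomorphic on $D$ when $f,g$ are (because $\min(1,R_f,R_g)\ge1$ once $R_f,R_g\ge1$), and likewise that $\otimes$ preserves the property of having strictly positive radius of convergence. Thus $\mathcal O(D)$ and $\mathcal O_0$ are $\otimes$-stable subrings of $\C([[z]])$ containing $z$, and inherit the ring axioms; this block costs nothing beyond $S$ and Spira's lemma.

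\textbf{The ring $\mathcal A$ of arithmetical functions.} The isomorphism with the (formal) ring of Dirichlet series is once more $f\mapsto\sum_{n\ge1}f(n)n^{-s}$. For the integral-domain property I would use the order argument: set $\nu(f)=\min\{n:f(n)\ne0\}$ for $f\ne0$; if $f,g\ne0$, then among the divisors $d$ of $\nu(f)\nu(g)$ the term $f(d)g(\nu(f)\nu(g)/d)$ can be nonzero only for $d=\nu(f)$, so $(f\star g)(\nu(f)\nu(g))=f(\nu(f))g(\nu(g))\ne0$ and $f\star g\ne0$. Locality: a recursion on $n$ produces a $\star$-inverse of $f$ precisely when $f(1)\ne0$, so the non-units form the ideal $\mathfrak m=\{f:f(1)=0\}$ with $\mathcal A/\mathfrak m\cong\C$, and a commutative ring whose non-units form an ideal is local. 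The one non-elementary input is factoriality, for which I would cite the Cashwell--Everett theorem: the assignment $f\mapsto\sum f\!\big(\textstyle\prod_ip_i^{a_i}\big)\prod_ix_i^{a_i}$ (the $i$-th prime $\leftrightarrow x_i$) identifies $\mathcal A$ with the ring $\C[[x_1,x_2,\dots]]$ of formal power series in countably many indeterminates, $\star$ going over to multiplication; each $\C[[x_1,\dots,x_n]]$ is a unique factorization domain and any factorization in the countable-variable ring uses only finitely many variables, whence unique factorization in $\mathcal A$.

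\textbf{The ring $\mathcal D$.} As a subring of the ring of formal Dirichlet series, $\mathcal D$ is automatically an integral domain, and by definition it is a ring of Dirichlet series. For the remaining assertions the plan is to transport the structure of the previous block through the Bohr correspondence $p_i^{-s}=x_i$, which realizes $\mathcal D$ inside power series in countably many variables, then to take the maximal ideal to be $\{F\in\mathcal D:\lim_{\sigma\to+\infty}F(\sigma)=0\}$ (i.e.\ $a_1=0$) and to reduce factoriality back to the Cashwell--Everett ring. I expect this to be the main obstacle: unlike for $\mathcal A$, one must verify that the analytic condition ``convergent on $\{\Re s>0\}$'' is preserved by the algebraic operations --- in particular that the Dirichlet inverse of an element with $a_1\ne0$ again lies in $\mathcal D$, which is the delicate point and the only place where the convergence hypothesis is genuinely used --- and here I would lean on the quoted results of Lucht and of Hedenmalm--Lindqvist--Seip rather than argue from scratch.
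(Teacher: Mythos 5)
The paper states this lemma with no proof at all, so there is nothing to compare your argument against; your plan supplies what the authors omit, and the route you choose is the standard one. Each block is sound: transport of structure along $S$ plus Spira's lemma to see that $\mathcal O(D)$ and $\mathcal O_0$ are $\otimes$-stable subrings of the formal ring; the order function $\nu$ for the integral-domain property; the recursion on $f(1)\ne 0$ for locality; Cashwell--Everett for factoriality of the arithmetical functions; and you correctly isolate the one genuinely analytic point, namely that the Dirichlet inverse of an element of $\mathcal D$ with $a_1\ne 0$ stays in $\mathcal D$.

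Two caveats. First, your gloss of Cashwell--Everett (``any factorization in the countable-variable ring uses only finitely many variables'') is not literally true --- a single element of $\C[[x_1,x_2,\dots]]$ may involve all the variables --- and the real proof is a stabilization argument over the finite-variable unique factorization domains $\C[[x_1,\dots,x_n]]$; since you are citing the theorem rather than proving it this costs nothing, but the parenthetical should not be offered as the argument. Second, the references you propose for the $\mathcal D$ block are the wrong ones: Lucht's theorem concerns Ramanujan expansions, and the Hedenmalm--Lindqvist--Seip criterion characterizes invertibility in $\mathcal H^\infty$, a strictly stronger condition than membership in $\mathcal D$. What you actually need --- that the Dirichlet inverse of a series with $a_1\ne 0$ and finite abscissa of convergence (equivalently, coefficients of polynomial growth) again has finite abscissa, together with the local and factorial structure of $\mathcal D$ --- is precisely the subject of the Bayart--Mouze paper \cite{Bayart} already in the bibliography, and that is the citation to make. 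Note also that the paper itself wavers between two definitions of $\mathcal D$ (series convergent on $\{\Re s>0\}$ versus series with finite abscissa of convergence); your proof sketch should fix one, and the sentence following the lemma indicates the authors intend the latter.
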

We recall that a convergent Dirichlet series is a series $\displaystyle \sum_{n=1}^{\infty}\frac{a_n}{n^s}  $ having a finite  abscissa of convergence. This is equivalent to
$a_n= O(n^k) $ for some real positive $k$.
\begin{definition} Let $H$ be a separable Hilbert space. A basis $(x_n)$ is a Riesz basis for $H$ if it is equivalent to some (and therefore every) orthonormal basis $(y_n)$ for $H$,
that is  if there exists a topological isomorphism $L: H \to H$ such that $Lx_n = y_n$ for all $n$.
\end{definition}
The system $\{e^{int},\;n\in \BZ\}$ is a Riesz basis for $L^2[-\pi,\pi] $ and a conditional basis for $L^p[-\pi, \pi] $ with $1<p<\infty,\, p\neq 2$. In evocation of the polylogarithm function,  we cite the following example of  Babenko given in \cite{Babenko}, \cite{Singer} (p.428, Example 14.4): The systems $\{\vert t\vert^{-\vert\beta \vert} e^{int},\;n\in\BZ\}$ 
and $\{\vert t\vert^{\vert\beta \vert}e^{int},\;n\in\BZ\}$  with $0 < \beta < \frac{1}{2}$ are bounded conditional bases for $L^2[-\pi,\pi] $ that are not a Riesz basis.
Naturally all Riesz bases are equivalent, as do all orthonormal bases of  Hilbert spaces. The adjoint mapping $L^*: H\to H$  is a Hilbert space epimorphism.  The sequence  $x_n^* = L^*x_n$ is the biorthogonal sequence of  $(x_n)_{n \geq 1} $. 
We are going to see that the sequence $( {\mathscr L}_\tau(z^n)$ is a Riesz basis of $H^2(D)$ for $\Re\tau >1$, and the corresponding biorthogonal sequence is  $(\psi_n(z))_{n \geq 1}= (\psi_{n, \tau}(z))_{n \geq 1}$ where $\displaystyle  \psi_n(z) = \displaystyle \frac{1}{n^\tau} \sum_{ d \vert n} \mu_{\frac{n}{d}} d^\tau z^d$.\\
We will use the following characterization  of Riesz sequences, due to N.K.Bari \cite{Bari}, \cite{J.B}, \cite{HLS}:
\begin{lemma} \label{Bakri} Let $H$  be a Hilbert separable space and  $B= (x_n)_{n \geq 1}$ be a sequence in $H$. $B$ is a Riesz basis in $H$ if and only if
\begin{enumerate}
\item every $x\in H$ can be expanded as $ \displaystyle x=\sum_n a_n x_n$
\item There exist two constants $0 < c < C < \infty$ such that for every sequence $(a_n)_{n\geq 1}$ with finite support we have:
\[
c \sum_{n \geq 1} \vert a_n \vert^2 \leq \Vert \sum_{n \geq 1} a_n x_n \Vert^2 \leq C \sum_{n \geq 1} \vert a_n \vert^2.
\]
\end{enumerate}
\end{lemma}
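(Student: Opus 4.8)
The plan is to prove both implications directly from the definition of a Riesz basis as the image of an orthonormal basis under a topological isomorphism, using separability of $H$ to fix a reference orthonormal basis $(y_n)_{n\geq 1}$.

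For the forward implication I would assume $B$ is a Riesz basis, with a topological isomorphism $L\colon H\to H$ satisfying $Lx_n=y_n$ for some orthonormal basis $(y_n)$. Given $x\in H$, expanding $Lx=\sum_n\langle Lx,y_n\rangle\,y_n$ and applying the continuous map $L^{-1}$ termwise yields $x=\sum_n\langle Lx,y_n\rangle\,x_n$, which is (1). For (2), if $(a_n)$ has finite support then $\sum a_n x_n=L^{-1}\big(\sum a_n y_n\big)$, and the two-sided operator bound $\|L\|^{-1}\|v\|\le\|L^{-1}v\|\le\|L^{-1}\|\,\|v\|$ combined with Parseval's identity $\|\sum a_n y_n\|^2=\sum|a_n|^2$ gives (2) with $c=\|L\|^{-2}$ and $C=\|L^{-1}\|^{2}$.

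For the converse I would fix an arbitrary orthonormal basis $(y_n)$ of $H$ and define $T$ on the dense subspace of finite linear combinations of the $y_n$ by $T\big(\sum a_n y_n\big)=\sum a_n x_n$. The right inequality in (2) shows $T$ is bounded there, hence extends to a bounded operator on $H$ with $\|T\|\le\sqrt C$, and by continuity the left inequality gives $\|Tv\|\ge\sqrt c\,\|v\|$ for all $v\in H$, so $T$ is injective with closed range. Surjectivity is the crux: given $x\in H$, property (1) supplies $(a_n)$ with $x=\sum a_n x_n$; since the partial sums $\sum_{M<n\le N}a_n x_n$ are Cauchy, the lower bound in (2) forces $\sum_{M<n\le N}|a_n|^2\to 0$, whence $(a_n)\in\ell^2$, the series $v=\sum a_n y_n$ converges in $H$, and $Tv=x$ by continuity. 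Thus $T$ is a bounded bijection, $T^{-1}$ is bounded (open mapping theorem, or directly from $\|Tv\|\ge\sqrt c\,\|v\|$), and $L:=T^{-1}$ is a topological isomorphism with $Lx_n=y_n$; in particular $(x_n)$ is a basis, so $B$ is a Riesz basis.

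I expect the main obstacle to be exactly the surjectivity step of the converse: one must upgrade the bare expansion in (1) to an $\ell^2$ statement about its coefficients in order to produce the preimage under $T$, and this is the one place where the lower inequality of (2) is genuinely needed — dropping it, condition (1) alone does not force square-summable coefficients and $T$ may fail to be onto. Everything else is standard: extension by density, Parseval's identity, and the open mapping theorem.
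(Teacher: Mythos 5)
Your argument is correct. Note that the paper itself gives no proof of this lemma: it is quoted as a classical characterization due to N.~K.~Bari, with references, so there is nothing to compare against. Your self-contained proof is the standard one, and you correctly identify the only non-routine point, namely that surjectivity of $T$ in the converse requires combining the bare expansion in (1) with the lower bound in (2) applied to the tails $\sum_{M<n\le N}a_nx_n$ to conclude $(a_n)\in\ell^2$; the rest (Parseval, extension by density, and boundedness of $T^{-1}$ from $\Vert Tv\Vert\ge\sqrt{c}\,\Vert v\Vert$) is routine.
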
 
The following lemma uses ideas from  \cite{Wintner}, see also  \cite{J.B}, \cite{HLS}.  
 \begin{lemma} We have the following equalities for $\Re s >1$
 \begin{enumerate}\label{dotproduct} 
 \item 
 $\displaystyle ({\mathscr L}_s(z^m) \vert {\mathscr L}_s(z^n)) = \sum_{\substack{k, l \geq 1\\ km=ln}} \frac{1}{k^sl^s} = \frac{ (\gcd(m,n))^{2s}}{(mn)^s} \zeta(2s)
 $.
 \\
 \item 
 $\displaystyle ({\mathscr M}_s(z^m) \vert {\mathscr L}_s(z^n)) = \sum_{\substack{k, l \geq 1\\ km=ln}} \frac{\mu_k}{k^sl^s} = \frac{ (\gcd(m,n))^{2s}}{(mn)^s} \mu_{\delta(m,n)} \sum_{\substack{j \geq 1\\ (j,\delta(m,n))=1}} \frac{ \mu_j}{j^s}.$
 \end{enumerate}
 with, if $\displaystyle f(z)= \sum_{n=0}^{\infty}a_nz^n,\;    g(z)= \sum_{n=0}^{\infty}b_nz^n,           $
 \[(f(z)\vert g(z))=  \sum_{n=0}^{\infty}a_n{\bar b}_n.\]
 \end{lemma}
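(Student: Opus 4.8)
The plan is to compute both pairings straight from the definition $(f\vert g)=\sum_{N\ge 1}a_N\overline{b_N}$ applied to Taylor coefficients. First I would record that the coefficient of $z^N$ in $\mathscr{L}_s(z^m)=\sum_{k\ge 1}z^{km}/k^s$ equals $k^{-s}$ when $m\mid N$ and $k=N/m$ (and $0$ otherwise), while the coefficient of $z^N$ in $\mathscr{M}_s(z^m)$ equals $\mu_k k^{-s}$ in the same case. In particular $\mathscr{L}_s(z^m),\mathscr{M}_s(z^m)\in H_0^2(D)$ as soon as $\Re s>\frac{1}{2}$, since $\sum_k\vert k^{-s}\vert^2\le\zeta(2\Re s)<\infty$, so all the pairings make sense; and inserting these coefficients into the sum over $N$ yields the first equality in each of (1) and (2), because the only $N$ that contribute are the common multiples of $m$ and $n$, that is, the integers $N=km=ln$.

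Next I would reindex those common multiples. Every common multiple of $m$ and $n$ is $N=j\,\lcm(m,n)$ for a unique $j\ge 1$; writing $g=\gcd(m,n)$ one has $N=jmn/g$, $k=N/m=j\,(n/g)=j\,\delta(m,n)$ and $l=N/n=j\,(m/g)$, where $\delta(m,n):=\lcm(m,n)/m=n/g$ and $\gcd(m/g,n/g)=1$. Since $k^{-s}l^{-s}=(mn)^sN^{-2s}=g^{2s}(mn)^{-s}j^{-2s}$, assertion (1) follows immediately from $\sum_{j\ge 1}j^{-2s}=\zeta(2s)$. For (2) the same substitution turns the sum into $\frac{g^{2s}}{(mn)^s}\sum_{j\ge 1}\mu_{j\delta(m,n)}\,j^{-2s}$, and I would then use the elementary fact that $\mu_{j\delta(m,n)}$ vanishes unless $\gcd\bigl(j,\delta(m,n)\bigr)=1$, in which case it equals $\mu_j\mu_{\delta(m,n)}$; hence the sum collapses to $\mu_{\delta(m,n)}\sum_{(j,\delta(m,n))=1}\mu_j j^{-2s}$, which is the asserted form. (If a fully closed expression is wanted, the last restricted sum equals $\delta(m,n)^{2s}/\bigl(\zeta(2s)\,\Phi_{2s}(\delta(m,n))\bigr)$ by the Euler-product argument already used in the proof of \lemref{subseries}, with $s$ there replaced by $2s$.)

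There is no essential obstacle: for $\Re s>1$ every series in sight converges absolutely, so the one reindexing step --- running over $N=j\lcm(m,n)$ in place of the common multiples $N$ of $m$ and $n$ --- is a bijection, not a rearrangement. The two places I would be most careful about are the $\gcd$ and $\lcm$ bookkeeping $\lcm(m,n)/m=\delta(m,n)=n/\gcd(m,n)$ together with the coprimality $\gcd(m/g,n/g)=1$, where all the arithmetic content sits, and, for (2), the observation that a prime dividing both $j$ and $\delta(m,n)$ forces $\mu_{j\delta(m,n)}=0$; granting these, the rest is direct computation.
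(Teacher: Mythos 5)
Your proof is correct, and it is the natural coefficient-by-coefficient computation that the paper intends (the paper states this lemma without proof, only citing Wintner). One substantive remark: your calculation for part (2) produces $\displaystyle\frac{(\gcd(m,n))^{2s}}{(mn)^s}\,\mu_{\delta(m,n)}\sum_{(j,\delta(m,n))=1}\frac{\mu_j}{j^{2s}}$, with exponent $2s$ in the final sum, whereas the statement as printed has $j^{s}$; your version is the right one (since $k^{-s}l^{-s}=g^{2s}(mn)^{-s}j^{-2s}$), and it agrees with the $j^{2s}$ appearing in the paper's immediately following norm lemma, so the $j^{s}$ in the statement is a typo rather than a flaw in your argument. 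A very minor point you could add for completeness: the pairing $(f\vert g)=\sum a_N\bar b_N$ conjugates the second factor, so the displayed identities as written (with $l^{-s}$ rather than $l^{-\bar s}$) implicitly take $s$ real, which is consistent with the hypothesis $s>1$ in the proposition the lemma feeds into.
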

 \begin{remark}
  The basic example is provided by the Hilbert space $L^2(0, \pi)$ and the dilates $(u_n)$ of the function $\displaystyle u(x) = \sum_{ k \geq 1} \frac{\sin k x}{k^s} $, with
\[
 (u_m \vert u_n)=\frac{ \pi}{2} \sum_{\substack{ k, l \geq 1\\ km= l n}} \frac{1}{k^s l^s} =\frac{\pi}{2} \zeta(2s) \frac{(\gcd(m,n))^s}{ (mn)^s}.
 \]
 \end{remark}
 \begin{lemma} Let $(c_n)_{n \geq 1} $ a sequence with finite support of complex numbers. Then
 \begin{enumerate}
 \item
 $ \Vert \displaystyle \sum_{ n \geq 1} c_n {\mathscr L}_s(z^n) \Vert^2 =\displaystyle \sum_{m, n \geq 1} c_m \overline{c}_n ({\mathscr L}_s(z^m) \vert {\mathscr L}_s(z^n)) = \zeta(2s) \sum_{m,n \geq 1} \frac{(\gcd(m,n))^{2s}}{(mn)^s}  c_m \overline{c}_n.$\\
 \item  \begin{equation*}\begin{split}\Vert \displaystyle \sum_{ n \geq 1} c_n {\mathscr M}_s(z^n) \Vert^2 &= \displaystyle \sum_{m,n \geq 1} c_m \overline{c}_n ({\mathscr L}_s(z^m) \vert {\mathscr L}_s(z^n))\\
 & = \displaystyle \sum_{m,n \geq 1} \frac{(\gcd(m,n))^{2s}}{(mn)^s}  c_m \overline{c}_n \mu_{  \frac{n}{\lcm(m,n)} }\displaystyle \sum_{\substack{ j \geq 1\\ (j,\, \delta(n,m))  
 = 1}} \frac{\mu_j}{j^{2s}}.\\  \end{split}\end{equation*}
  \end{enumerate}
 \end{lemma}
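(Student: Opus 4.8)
The plan is to expand the square by sesquilinearity of the inner product and then to quote \lemref{dotproduct}; there is essentially nothing beyond bookkeeping. Fix $\Re s>1$, so that every dilate ${\mathscr L}_s(z^n)$ and ${\mathscr M}_s(z^n)$ lies in $H_0^2(D)$ and the pairing $(f\vert g)=\sum_k a_k\overline{b}_k$ is a genuine inner product there. Because $(c_n)_{n\geq 1}$ has finite support, $\sum_{n\geq 1}c_n{\mathscr L}_s(z^n)$ and $\sum_{n\geq 1}c_n{\mathscr M}_s(z^n)$ are finite linear combinations, so expanding the norm raises no convergence issue: using that $(f\vert g)$ is linear in $f$ and conjugate-linear in $g$, one gets directly
\[
\Bigl\Vert\sum_{n\geq 1}c_n{\mathscr L}_s(z^n)\Bigr\Vert^2=\sum_{m,n\geq 1}c_m\overline{c}_n\,({\mathscr L}_s(z^m)\vert{\mathscr L}_s(z^n)),
\]
which is the middle equality in (1), and likewise the middle equality in (2) with ${\mathscr M}_s$ placed in the appropriate slot(s).

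It then remains to substitute the values computed in \lemref{dotproduct}. For (1) we plug in $({\mathscr L}_s(z^m)\vert{\mathscr L}_s(z^n))=\zeta(2s)\,(\gcd(m,n))^{2s}/(mn)^s$; the scalar $\zeta(2s)$ does not depend on $m$ or $n$, so it factors out of the finite double sum, producing the asserted closed form. For (2) we plug in the corresponding value from \lemref{dotproduct}(2): this contributes the same prefactor $(\gcd(m,n))^{2s}/(mn)^s$, a M\"obius factor carried by the reduced index $\delta(m,n)=\lcm(m,n)/m$, and the residual Dirichlet series $\sum_{(j,\delta(m,n))=1}\mu_j\,j^{-2s}$, and summing these against $c_m\overline{c}_n$ over $m,n$ gives the stated bilinear form.

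The one point that is not purely formal is the evaluation of the inner products of dilates, i.e. the content of \lemref{dotproduct} itself. There one writes $(f(z^m)\vert g(z^n))=\sum_{km=ln}a_k\overline{b}_l$, uses the parametrization $km=ln\iff k=\delta(m,n)t,\ l=\delta(n,m)t$ for $t\geq 1$ (with $\gcd(\delta(m,n),\delta(n,m))=1$, since $m/\gcd(m,n)$ and $n/\gcd(m,n)$ are coprime), and factors out $(\gcd(m,n))^{2s}/(mn)^s$, leaving a Dirichlet series in $t$ with denominator $t^{2s}$. With $g={\mathscr L}_s$ this series is $\sum_t\mu(\delta(m,n)t)\,t^{-2s}$, which vanishes unless $\delta(m,n)$ is squarefree and $t$ squarefree and coprime to it, whence $\mu(\delta(m,n)t)=\mu_{\delta(m,n)}\mu_t$ and the sum collapses to $\mu_{\delta(m,n)}\sum_{(t,\delta(m,n))=1}\mu_t\,t^{-2s}$, evaluated by the same Euler-product manipulation already used for \eqref{subseries2}. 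With $g={\mathscr M}_s$ there are two M\"obius weights $\mu(\delta(m,n)t)\mu(\delta(n,m)t)$, so both reduced indices must be squarefree and the $t$-coefficient reduces to $\mu_{\delta(m,n)}\mu_{\delta(n,m)}\mu_t^{2}$. The main (mild) obstacle is exactly this bookkeeping: tracking the squarefreeness and coprimality constraints, and not dropping a sign, when passing to the closed form. Once the value is in hand, inserting it termwise into the finite double sum, as in the first two paragraphs, finishes the proof.
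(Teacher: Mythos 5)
Your approach is the same as the paper's (which states the lemma without proof): expand the norm by sesquilinearity over the finitely supported $(c_n)$ and substitute the inner products of dilates from the preceding lemma, whose proof via the parametrization $k=\delta(m,n)t$, $l=\delta(n,m)t$ of $km=ln$ you reconstruct correctly. One point deserves emphasis: your (correct) evaluation of $({\mathscr M}_s(z^m)\vert{\mathscr M}_s(z^n))$ yields the coefficient $\mu_{\delta(m,n)}\,\mu_{\delta(n,m)}\sum_{(t,\,\delta(m,n)\delta(n,m))=1}\mu_t^{2}\,t^{-2s}$, which does not coincide with the closed form displayed in item (2) of the statement; the latter contains misprints (a single M\"obius factor $\mu_{n/\lcm(m,n)}$ whose index is generically not an integer, $\mu_j$ in place of $\mu_j^2$, and $({\mathscr L}_s(z^m)\vert{\mathscr L}_s(z^n))$ where $({\mathscr M}_s(z^m)\vert{\mathscr M}_s(z^n))$ is meant). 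A quick sanity check with $m=n=1$ confirms your version: the inner product is $\sum_k\mu_k^2 k^{-2s}=\zeta(2s)/\zeta(4s)$, as your formula gives, not $1/\zeta(2s)$ as the paper's would. So your argument proves the corrected statement, not the one as printed.
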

 \begin{remark} We thus see appearing the  $N \times N $ symmetric square matrices
 \[
 M_{s,N} = \left ( \frac{ (\gcd(m,n))^{2s})}{(mn)^s }\right)_{ 1\leq m,n \leq N}.
 \]
 It is possible to compute the determinant of the matrix $M_{s,N}$. We recall first that the Smith determinant is defined to be 
 \[\Delta_N=  {\rm det} \left(\gcd(m,n)\right)_{1\leq m,\, n\leq N}\]
  and its value, in terms of the Euler's totient function $\Phi$, is \cite{Smith} 
  \[ \Delta_N= \Phi(1)\Phi(2)\cdots \Phi(N).\]
 The determinant $\Delta_N^{(r)}=  {\rm det} \left(\gcd(m,n)^r\right)_{1\leq m,\, n\leq N} $  where $r$ is a real number, was also evaluated by Smith in \cite{Smith}. To explain the value of $\Delta_N^{(r)}$ we introduce  the Jordan's totient function  $ {\displaystyle J_{k}}$ given by  \cite{Smith}, \cite{Andrica},
  \cite{Thajoddin}
\[ {\displaystyle J_{k}(n)=n^{k}\prod _{p|n}\left(1-{\frac {1}{p^{k}}}\right)\,},\] where  ${\displaystyle p} $ ranges through the prime divisors of  ${\displaystyle n}$ . We have $ \displaystyle J_{1}(n)= \Phi(n)$. Furthermore 
   \[{\displaystyle \sum _{d|n}J_{k}(d)=n^{k}.}\]
which may be written as convolution product as
\[\displaystyle J_{k}(n)\star 1=n^{k}\,\]
and by a M\"{o}bius inversion 
\[  \displaystyle J_{k}(n)= \mu (n)\star n^{k}.\]
The Dirichlet generating function of  series for  $ {\displaystyle J_{k}} $ is
\[ \displaystyle \sum _{n\geq 1}{\frac {J_{k}(n)}{n^{s}}}={\frac {\zeta (s-k)}{\zeta (s)}}.\]
 Similarly to the case of $ \Delta= \Delta^{(1)} $,  we have the formula
\[\Delta_N^{(r)}=  J_r(1) J_r(2)\cdots  J_r(N).  \]  
Since the determinant is a multilinear form we obtain
\[{\rm det }\, M_{s,N}= \frac{1}{(N!)^{2s}}  J_{2s}(1) J_{2s}(2)\cdots  J_{2s}(N),\]
so the matrices $ M_{s,N}$ are invertible. These statements remain valid for every $s\in \C$ by analytic continuation. The matrices $ M_{s,N}$ are also positive for $ s\in (1, + \infty )$ and according to \cite{Wintner} and \cite{LINDSEIP},  the smallest eigenvalue of $ \lambda_N (s) $ and the largest eigenvalue  $ \Lambda_N (s) $  of $ M_ {s, M} $ satisfy
 \begin{equation}\label{eigenvalues}
\frac{ \zeta(2s)}{\zeta(s)^2} \leq \lambda_N(s) \leq \Lambda_N(s) \leq \frac{ \zeta(s)^2}{ \zeta(2s)}.
 \end{equation}
 We deduce that for a sequence  $a= (a_n)_{ n \geq 1} \in l^2$ we have  \cite{LINDSEIP}
 \[
  \frac{ \zeta(2s)}{(\zeta(s))^2} \sum_{1 \leq n \leq N} \vert a_n \vert^2 \leq \sum_{1 \leq m,n \leq N} \frac{(\gcd(m,n))^{2s}}{(mn)^s} a_m \overline{a}_n  \leq \frac{ (\zeta(s))^2}{ \zeta(2s)} \sum_{1 \leq n \leq N} \vert a_n \vert^2.
 \]
 \end{remark}
 This prove the following 
 \begin{proposition}
 If $ s >1, $ the sequence $({\mathscr L}_s(z^n))_{n \geq 1} $ is a Riesz basis of $H^2(D)$.
 \end{proposition}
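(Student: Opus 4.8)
The plan is to verify the two conditions of the Bari criterion, Lemma~\ref{Bakri}, for the family $({\mathscr L}_s(z^n))_{n\geq1}$ in $H^2(D)$, which we equip with the norm $\|\sum a_nz^n\|^2=\sum|a_n|^2$; for this norm the monomials $(z^n)_{n\geq1}$ form an orthonormal basis. Condition~(2) is essentially free: by the lemma that evaluates $\|\sum_n c_n{\mathscr L}_s(z^n)\|^2=\zeta(2s)\sum_{m,n}\frac{(\gcd(m,n))^{2s}}{(mn)^s}c_m\overline c_n$ together with the spectral bounds \eqref{eigenvalues} for the positive symmetric matrices $M_{s,N}$, one obtains
\[
\frac{\zeta(2s)^2}{\zeta(s)^2}\sum_{n}|c_n|^2\ \leq\ \Big\|\sum_{n}c_n{\mathscr L}_s(z^n)\Big\|^2\ \leq\ \zeta(s)^2\sum_{n}|c_n|^2
\]
for every finitely supported $(c_n)$; here $s>1$ guarantees $\zeta(s),\zeta(2s)\in(0,\infty)$ and the positivity of $M_{s,N}$. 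Thus (2) holds with $c=\zeta(2s)^2/\zeta(s)^2$ and $C=\zeta(s)^2$.

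The substance of the argument is condition~(1), i.e.\ totality of the system; since $(z^n)_{n\geq1}$ is an orthonormal basis, it suffices to exhibit each monomial $z^n$ in the closed linear span $V$ of $({\mathscr L}_s(z^k))_{k\geq1}$. The mechanism is that ${\mathscr L}_s$ and ${\mathscr M}_s$ are mutual inverses for $\otimes$, which unwound reads $\sum_{k\geq1}\frac{\mu_k}{k^s}{\mathscr L}_s(z^{nk})=z^n$. To make this precise I would first extend the upper estimate in (2) by continuity to all of $l^2$; since $(\mu_k/k^s)_k\in l^2$ (as $s>\tfrac12$), the series $\sum_{k\geq1}\frac{\mu_k}{k^s}{\mathscr L}_s(z^{nk})$ then converges in $H^2(D)$. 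Passing to power-series coefficients, which is a continuous operation, the coefficient of $z^N$ in this limit vanishes unless $n\mid N$, and for $N=nm$ it equals $\frac{1}{m^s}\sum_{k\mid m}\mu_k=[\,m=1\,]$; hence the sum equals $z^n$, so $z^n\in V$ and $V=H^2(D)$.

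Combining the two conditions and invoking Lemma~\ref{Bakri} gives the Riesz-basis property; as a byproduct the unique expansion coefficients of $z^n$ are $(\mu_k/k^s)_k$, and a direct computation of $({\mathscr L}_s(z^m)\,\vert\,\psi_n)=\delta_{mn}$ identifies the dual Riesz basis with the family $(\psi_n)$ announced above. The only point requiring care is totality: the frame inequalities of (2) are cheap once \eqref{eigenvalues} is in hand, but (1) genuinely needs the infinite, $l^2$-convergent M\"obius reconstruction of the monomials --- a finite combination $\sum_{k\mid n}c_k{\mathscr L}_s(z^k)$ can never be a single monomial, since each ${\mathscr L}_s(z^k)$ is supported on all multiples of $k$.
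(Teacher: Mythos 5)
Your proof is correct, and on the quantitative side it coincides with the paper's: both reduce condition (2) of Bari's criterion (Lemma~\ref{Bakri}) to the computation $\Vert \sum_n c_n{\mathscr L}_s(z^n)\Vert^2=\zeta(2s)\sum_{m,n}\frac{(\gcd(m,n))^{2s}}{(mn)^s}c_m\overline c_n$ combined with the two-sided eigenvalue bounds \eqref{eigenvalues} for the matrices $M_{s,N}$, and your constants $c=\zeta(2s)^2/\zeta(s)^2$ and $C=\zeta(s)^2$ are exactly what one obtains after multiplying through by $\zeta(2s)$. Where you genuinely go beyond the text is condition (1): the paper passes from the frame inequalities directly to the Proposition without verifying totality, which is not automatic --- an orthonormal sequence such as $(z^{2n})_{n\geq 1}$ satisfies the two-sided inequality with $c=C=1$ yet is not a basis of $H_0^2(D)$. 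Your $l^2$-convergent M\"obius reconstruction $z^n=\sum_{k\geq 1}\mu_k k^{-s}{\mathscr L}_s(z^{nk})$, justified by the upper frame bound and the continuity of the coefficient functionals, supplies exactly this missing completeness step; in the paper it is only implicit in the earlier assertion that ${\mathscr L}_s$ and ${\mathscr M}_s$ are mutual inverses for $\otimes$ (equivalently, that $f\mapsto f\otimes{\mathscr L}_s$ is an isomorphism of $H_0^2(D)$ with inverse $f\mapsto f\otimes{\mathscr M}_s$) and in the unproved identification of the biorthogonal family $(\psi_n)$. Your closing remark that no finite combination of the ${\mathscr L}_s(z^k)$ can equal a monomial correctly explains why the infinite, norm-convergent series is unavoidable at this point.
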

  By direct computation we see that the associated biorthogonal basis is $(\psi_n(z))_{n \geq 1}$ where
 \begin{equation}\label{ortho}
 \psi_n(z)= \psi_{n,s}(z)= \frac{1}{n^s} \sum_{d \vert n} \mu_{\frac{n}{d}} d^s z^d.
 \end{equation}
 The two extreme factors in \eqref{eigenvalues} have interesting Dirichlet series expansion \cite{McCarthy} (p.227)
 \begin{equation*}
 \frac{\zeta^2(s)}{\zeta(2s)}= \sum_{n=1}^{\infty} \frac{\theta(n)}{n^s},\quad \frac{\zeta(2s)}{\zeta^2(s)}=  \sum_{n=1}^{\infty} \frac{\lambda(n) \theta(n)}{n^s}.
 \end{equation*}
 The function
 $\theta(n)$ is defined by
 \[ \theta(n)= 2^{\omega(n)},\]
 where $\omega(n) $ is the number of different prime factors of $n$. It is a multiplicative function, also related to the M\"{o}bius function by $\displaystyle \theta(n)= \sum_{d\vert n} \vert \mu(d)\vert $.
 \begin{remark}
 It is worth noting that the positivity of the matrix  $ M_{s,N}$  can be deduced from the Franel integral \eqref{Chowla3}. If for a suitable real function  $f$ we have  two real $s, s'$ such that for every  $1\leq m,n\leq N$
 \[\int_0^1 f(mx)f(nx) dx= \frac{\gcd(m,n)^s}{m^{s'}n^{s'}}\]
 then for $c_1, \cdots,c_N \in \C$
 \[ \sum_{1\leq m,n\leq N}   \frac{\gcd(m,n)^s}{m^{s'}n^{s'}}c_m\bar{c}_n=  {\bigintss_0^1} \left \vert  \sum_{p=1}^{p=N}f(px) c_p\right \vert^2\,dx\geq 0\]
and \[ \sum_{1\leq m,n\leq N}  \gcd(m,n)^s c_m\bar{c}_n=     {\bigintss_0^1}  \left \vert  \sum_{p=1}^{p=N}p^{s'} f(px) c_p\right \vert^2\,dx\geq 0.   \]
We have the definite positivity if the functions $x \to f(px), 1\leq p\leq N  $ are linearly independent.
 \end{remark}
  \subsection{Multipliers }
 We consider new spaces of Dirichlet series.
 \begin{enumerate}
 \item The space $\mathcal{ H}^2 =\left\{ \displaystyle \sum_{n \geq 1} \frac{a_n}{n^s}, \,a=(a_n)_{n \in \N^*} \in l^2 \right\}$ corresponding to the spaces $L^2(0,1)$ et $H^2(D)$.
 \item The space $ \mathcal{ H}^\infty =  {\bf H}^\infty (\{ s \in \C, \Re s >0\}) \cap \mathcal{ D}$ equipped with the usual norm $\Vert \; \Vert_\infty$, defined on the space of  measurable and bounded functions defined on  $\{ s \in \C : \Re s >0 \}$.
 \end{enumerate}
  It is easily shown that $ \mathcal{ H}^\infty  \subset \mathcal{ H}^2 $ and that $\Vert f \Vert^2 \leq \Vert f \Vert_\infty $  for $ f \in \mathcal{ H}^\infty$. The set of multipliers of $\mathcal{ H}^2 $ can be identified with $\mathcal{ H}^\infty$. The norm of the multiplier, as operator,  
 $M_\varphi : \mathcal{ H}^2 \ni f \longrightarrow \varphi f \in \mathcal{ H}^2$ is  $\Vert M_\varphi \Vert = \Vert \varphi \Vert_\infty$. We also have the following interesting  property: Let $\varphi \in \mathcal{ H}^\infty$,  the multiplier  operator   $M_\varphi $ is an isomorphism of  $\mathcal{ H}^2$ if and only if $\varphi ^{-1} \in \mathcal{ H}^\infty$. In this case $\Vert M_\varphi^{-1} \Vert = \Vert \varphi^{-1} \Vert_\infty$. 
  In the correspondence between  power series  and  Dirichlet series  the multiplier set of $ H_0^2(D) $ for $ \otimes $ is identified with the set of power series
 $ \displaystyle \varphi (s) = \sum_ {n \geq 1} {\alpha_n} z^n$  such that the function $ \displaystyle \varphi (s) = \sum_ {n \geq 1} \frac{\alpha_n} {n^s} $ belongs to $ \mathcal {H}^{\infty }$.  An illustration of this fact is given by the polylogarithm function: 
 For $\Re \tau >1 $ the series ${\mathscr L}_{\tau}(z) $ and ${\mathscr M}_\tau(z)$ are reciprocal multipliers of $H_0^2(D)$ for the the operation $\otimes$.
 Moreover the image of the multiplier $  {\mathscr L}_s(z)$ by the map $S$ is the translate of the zeta function $\displaystyle \zeta(s+\tau)= \sum_{n\geq 1} \frac{1}{n^{s+\tau}}$.\\
\begin{example}We now give  an example of the expansion of a given  $g \in H_0^2(D)$ in the  Riesz basis $({\mathscr L}_s(z^n))_{n \geq 1}$. We need to find a sequence $(\alpha_n)_{n\geq 1}$ such that $\displaystyle g(z) = \sum_{k \geq 1} \alpha_k {\mathscr L}_s(z^k)$. If $f(z)$ is the formal power series $ f(z) = \sum_{k \geq1} \alpha_k z^k $ , then $g(z)$ should be $g(z)= f(z)\otimes {\mathscr L}_s(z)$ or $ f(z)= g(z) \otimes {\mathscr M}_s(z)$. According to the
Lemma \eqref{Spira} the convergence radius $R_f$  of $f$ satisfies $\displaystyle 1= \min(1, R_{{\mathscr M}_s}, R_g)\leq  R_f$ and 
\[f(z)= \sum_{n \geq 1} \big( \sum_{d \vert n} \frac{ \mu_{\frac{n}{d}}}{ (\frac{n}{d})^s} a_d \big) z^n = \sum_{n \geq 1}\frac{1}{n^s} \big( \sum_{d \vert n}  \mu_{\frac{n}{d}} d^s a_d \big ) z^n.\] We thus see that in terms of the biorthogonal basis \eqref{ortho}, $ \alpha_n = (g \vert \psi_n)$, naturally enough.
\end{example}
\subsection{On the Estermann's function}
The Estermann zeta function $E(s,a,z) $ is defined by the Dirichlet series
\begin{equation}\label{Estermann}
E(s,a,z) = \sum_{n \geq 1} \frac{\sigma_a(n)}{n^s} z^n  \quad {\Re}s > 1+\Re a,\; \vert z \vert \leq 1,
\end{equation}
where, as already denoted, $\displaystyle   \sigma_a(n) = \sum_{d\vert n} d^a, \, a\in \BC $. This Dirichlet series is closely related to Ramanujan sums. 
This series \eqref{Estermann} can be given in terms of the polylogarithm function, or more precisely can be expanded with respect to the Riesz basis
$\{{\mathscr L}_s(z^n),\, n\geq 1\} $. In fact if  $a>0$ then
\[
E(s,a,z) = \sum_{p \geq 1} \frac{1}{p^{s-a}} {\mathscr L}_s(z^p), \quad \vert z \vert \leq 1, \; \Re s > \mathrm{max} (1, 1+a ).
\]
This can be shown either by observing  that          \[(E \vert \psi_p)= \frac{1}{p^s} (\mu \star \sigma_a)(p) = \frac{1}{p^{s-a}} \] by using  $\sigma_a = I \star u^a$  with $I(n)=1 $ (the constant arithmetical function) and  $u^a(n)=n^a $ for every  $n \in \N^*$, or by showing  that
\[E(s,a, \bullet) \otimes {\mathscr M}_s= {\mathscr L}_{s-a}.\]
Assume that $a<0$ and  recall first that for $\Re(s+ \tau) >1$ then \eqref{ramanujan}
\[
\zeta(s+ \tau) \sum_{d \vert k} \mu_{\frac{k}{d}} d^{1-s-\tau} = \sum_{ n \geq 1} \frac{ c_k(n)}{n^{s+\tau}}.
\]
We deduce that for $t>0$ we have
\begin{equation*} \begin{split}
\sum_{n \geq 1} \frac{c_k(n)}{n ^\tau} e^{-nt} & = \frac{1}{2i \pi} \int_{c- i \infty}^{c+ i \infty} \Gamma(s) \sum_{n \geq 1} \frac{ c_k(n)}{n^{s+ \tau}} \frac{ds}{t^s} \\
&= \frac{1}{2i \pi} \int_{c- i \infty}^{c+ i \infty} \Gamma(s) \sum_{n \geq 1} \zeta(s+\tau) \sum_{d \vert k} \mu_{\frac{k}{d}} d^{1- \tau} \frac{ds}{(dt)^s} \\
&= \sum_{d \vert k} \mu_{\frac{k}{d}} d^{1- \tau} \sum_{n \geq 1} \frac{e^{-ndt}}{n^\tau}.\\
\end{split} \end{equation*}
We obtain, by analytic continuation, that
\[
\sum_{ n \geq 1} \frac{c_k(n)}{n^\tau} z^n = \sum_{d \vert k} \mu_{ \frac{k}{d}} d^{1- \tau} {\mathscr L}_{\tau} (z^d).
\]
 Since  $a<0$, we get from \eqref{Ramanujan2}  \[\sigma_a(n) = \zeta(1-a) \sum_{k \geq 1} \frac{ c_k(n)}{k^{1-a}}.\] Thus
\begin{equation*} \begin{split}
E(s,a,z) & = \sum_{ n \geq 1} \frac{\sigma_a(n)}{n^s} z^n \\
&= \zeta(1-a) \sum_{k \geq 1} \frac{1}{k^{1-a}} \left ( \sum_{n \geq 1} \frac{ c_k(n)}{n^s} z^n \right )\\ 
&= \zeta(1-a) \sum_{k \geq 1} \frac{1}{k^{1-a}} \left ( \sum_{d \vert k} \mu_{ \frac{k}{d}} d^{1-s} {\mathscr L}_s (z^d) \right).\\
\end{split} \end{equation*}
 \section{A link with Kubertt identities} 
 Let $\displaystyle f(z )= \sum_{ n \geq 1} a_n z^n $ be a power series, convergent for $\vert z \vert \leq 1$.  The  condition \[\sum_{ 1 \leq h \leq m} f( e^{ 2i \pi \frac{h}{m}}) =0 \] is equivalent to
 \[
 \sum_{ n \geq 1} a_n \left( \sum_{ 1 \leq h \leq m}  e^{ 2i \pi \frac{nh}{m}}\right) = m \sum_{j \geq 1} a_{jm} =0.
 \]
Furthermore  if we assume that  \[\sum_{1 \leq h \leq m} f(e^{2i \pi \frac{h}{m}}) =0\] for every $m \in \N^* $, then for every $m \in \N^*$
  \[\sum_{j \geq 1} a_{jm}= 0.\]
  The function       ${\mathscr M}_1$ satisfies this property.  Furthermore  according to \cite{H.Q} (p.294) if $a=(a_n)_{n \in \N^*} $ is a non zero sequence
   satisfying this property, then  $a \notin \ell^1$, the space of sequences whose series is absolutely convergent. This last condition is also satisfied by ${\mathscr M}_1 $. In fact if $\vert \mu(n)\vert= \mu^2(n) $ is the characteristic function of squarefree integers, then \cite{McCarthy} (p.227)
   \[\sum_{n=1}^{\infty}  \frac{\vert \mu(n)\vert}{n^s}=  \frac{\zeta(s)}{\zeta(2s)}\]
  and $\displaystyle \lim_{s\to 1} \sum_{n=1}^{\infty}  \frac{\vert \mu(n)\vert}{n^s}= \infty       $. Obviously this also results from \eqref{subseries3}. \\
   One wonders for what  $f \in \mathcal{ O }_0 $ the product $f \otimes {\mathscr M}_1$ will verify Kubert identity. Naturally, quite strong convergence hypothesis on the sequence  $a=(a_k)_{k\geq 1}$ will be required. We recall that  
 \[f\in H^2(D)  \rightarrow f \otimes {\mathscr M}_\tau \in H^2(D) \]
   is an  isomorphism. If $f(z) = \sum_{k \geq 1} a_k z^k $ with suitable convergence on  the circle ${\bf U}= \{\vert z \vert =1\} $ we can write
 \[
 \sum_{1 \leq h \leq m} (f \otimes {\mathscr M}_\tau) (e^{ 2 i \pi \frac{h}{m} }) = \sum_{k \geq 1} a_k \left ( \sum_{ 1 \leq h \leq m} {\mathscr M}_\tau( e^{2 i \pi \frac{h}{m}}) \right ) = \frac{1}{ m^{\tau-1}} \sum_{ k \geq 1}a_k \sum_{ j \geq 1} \frac{ \mu_{jD(m,k)}}{j^\tau}.
 \]
We need two lemmas, the first is strongly inspired by \cite{PP}.
\begin{lemma} Let $D \in \N^*$. For every $1 < \tau <\frac{3}{2}$ we have
$$
\left \vert  \sum_{ j \geq 1}  \frac{ \mu_{jD}}{j^\tau} \right \vert  =   \left \vert \frac{\mu(D)}{ \zeta( \tau)} \prod_{p \vert D} (1- p^{-\tau})  \right \vert \leq  e \frac{ \tau-1}{\zeta(\tau)}.
$$
\end{lemma}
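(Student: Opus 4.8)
\emph{Proof proposal.} The plan is to evaluate the series in closed form by means of \lemref{subseries} and then to estimate the resulting Euler factor against $\zeta(\tau)$.

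First, if $D$ is not squarefree then $jD$ is never squarefree, so $\mu_{jD}=0$ for every $j\ge 1$ and every quantity occurring in the statement vanishes; hence one may assume $D$ squarefree, so that $|\mu(D)|=1$. Applying \lemref{subseries}, i.e. \eqref{subseries2}, with $q=D$ and $s=\tau$ gives
\[
\sum_{j\ge 1}\frac{\mu_{jD}}{j^{\tau}}=\frac{\mu(D)\,D^{\tau}}{\Phi_{\tau}(D)\,\zeta(\tau)},
\]
and since $\Phi_{\tau}(D)=D^{\tau}\prod_{p\mid D}(1-p^{-\tau})$ this rearranges to the first equality of the statement. So it only remains to control the Euler factor $\prod_{p\mid D}(1-p^{-\tau})$.

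Here I would use that, because $\tau>1$, the Euler product $\prod_{p}(1-p^{-\tau})$ converges to $1/\zeta(\tau)$ (this is the identity recalled in the proof of \lemref{subseries}), while $\prod_{p\mid D}(1-p^{-\tau})$ is a finite subproduct of it all of whose factors lie in $(0,1)$; hence
\[
\frac{1}{\zeta(\tau)}\ \le\ \prod_{p\mid D}\bigl(1-p^{-\tau}\bigr)\ \le\ 1 .
\]
Feeding these two inequalities back into the closed form, together with the elementary estimates obtained by comparing $\zeta(\tau)=\sum_{n\ge 1}n^{-\tau}$ with $\int_{1}^{\infty}x^{-\tau}\,dx$, namely $\tfrac1{\tau-1}<\zeta(\tau)<1+\tfrac1{\tau-1}=\tfrac{\tau}{\tau-1}$, the asserted inequality reduces to a single one–variable estimate on the interval $1<\tau<\tfrac32$; on that interval $(\tau-1)\zeta(\tau)$ stays well below $e$ (it tends to $1$ as $\tau\to 1^{+}$ and equals $\tfrac12\zeta(\tfrac32)\approx 1.31$ at $\tau=\tfrac32$), so the constant $e$ in the statement gives ample slack for this and for the crude intermediate steps.

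The one genuinely delicate point is this last step: pinning down the constant and choosing at each stage the correct direction of $\tfrac1{\tau-1}<\zeta(\tau)<\tfrac{\tau}{\tau-1}$ so that the bound comes out in the clean form displayed. This is the part modelled on \cite{PP}; everything preceding it is a formal consequence of \lemref{subseries} and of the convergence of the Euler product for $\tau>1$.
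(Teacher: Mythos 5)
Your first step is essentially the paper's: the first equality is just \eqref{subseries2} with $q=D$, $s=\tau$ (after the correct reduction to squarefree $D$). But you should have flagged that it does \emph{not} ``rearrange to the first equality of the statement'': \eqref{subseries2} gives
\[
\sum_{j\ge 1}\frac{\mu_{jD}}{j^{\tau}}=\frac{\mu(D)}{\zeta(\tau)\prod_{p\mid D}(1-p^{-\tau})}=\mu(D)\prod_{p\nmid D}\bigl(1-p^{-\tau}\bigr),
\]
with the Euler factor in the \emph{denominator}, whereas the displayed middle term puts it in the numerator. That misprint is inherited from the paper, but it matters for what follows.

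The genuine gap is your last step. No choice of directions in the sandwich $\zeta(\tau)^{-1}\le\prod_{p\mid D}(1-p^{-\tau})\le 1$ combined with $\frac{1}{\tau-1}<\zeta(\tau)<\frac{\tau}{\tau-1}$ can produce the bound $e(\tau-1)/\zeta(\tau)$, because that bound is false: for $D=1$ the sum equals $1/\zeta(\tau)$ and the asserted inequality reads $1\le e(\tau-1)$, which fails for $1<\tau<1+e^{-1}$; more generally, for any fixed $D$ the left-hand side is $\prod_{p\nmid D}(1-p^{-\tau})\sim \frac{D}{\Phi(D)}(\tau-1)$ as $\tau\to 1^{+}$, while $e(\tau-1)/\zeta(\tau)\sim e(\tau-1)^{2}$. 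So the ``single one-variable estimate with ample slack'' you defer to does not exist, and your observation that $(\tau-1)\zeta(\tau)$ stays below $e$ is true but irrelevant to the inequality you need. For comparison, the paper's own proof is a Mertens-type argument: it expands $-\log\prod_{p\mid D}(1-p^{-\tau})=\sum_{p\mid D}p^{-\tau}+{}$(an error at most $\tfrac12$), compares $\sum_{p}p^{-\tau}$ with $\log\zeta(\tau)$, and invokes $1<(\tau-1)\zeta(\tau)<\tau$; tracked carefully (it contains sign slips of its own), that argument establishes $\prod_{p\mid D}(1-p^{-\tau})^{-1}\le e/(\tau-1)$, hence the bound $e/\bigl((\tau-1)\zeta(\tau)\bigr)\le e$, uniformly in $D$ --- not $e(\tau-1)/\zeta(\tau)$. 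Your sandwich proves exactly that corrected bound, and more cheaply, since $\prod_{p\nmid D}(1-p^{-\tau})\le 1\le e/\bigl((\tau-1)\zeta(\tau)\bigr)$; and such a uniform bound, together with $\sum_{j}\mu_{jD}/j=0$ for the finitely many $D=\delta(m,k)$ dividing $m$, is all the subsequent theorem actually uses. As a proof of the lemma as displayed, however, the proposal does not close --- and cannot.
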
 
\begin{proof} 
We introduce
\[
P_D (\tau) = \prod_{p \vert D} (1- p^{-\tau}),\; \quad \tau >1
\]
so we have
\[
\log P_D(\tau) =- \sum_{p \vert D} \log(1-p^{-\tau})=  \sum_{p \vert D} \frac{1}{p^ \tau} + \sum_{p \vert D} \sum_{k \geq 2} \frac{1}{k p^{k\tau}}.
\]
Now
 \begin{align*} 
 \sum_{p \vert D} \sum_{k \geq 2} \frac{1}{k p^{ k \tau}} & \leq \sum_{p\vert D} \sum_{k \geq 2} \frac{1}{k p^{ k \tau}} \leq \frac{1}{2} \sum_{p\vert D} \sum_{k \geq 2} \frac{1} {p^{k \tau}} \\
 & \leq \frac{1}{2} \sum_{p\vert D} \sum_{k \geq 2} \frac{1}{p^k} = \frac{1}{2} \sum_{p\vert D} \frac{1}{p(p-1)}\leq  \frac{1}{2}.
 \end{align*}
 And
 \[
\log P_D( \tau) = - \sum_{p \vert D} \log(1- p^{-\tau}) \leq \sum_{p\vert D} \frac{1}{p^{\tau}} + \frac{1}{2}.
 \]
 The same ideas give
\[
0 < \log \zeta(\tau) - \sum_{p} \frac{1}{p^\tau} < \frac{1}{2} \quad \tau >1.
\]
On the other hand it is easily seen that
\[
1 < (\tau-1) \zeta(\tau) < \tau, \quad \tau >1
\]
and for  $\tau < \frac{3}{2}$,
\[
0 < \log \zeta(\tau)+ \log( \tau-1) < \log \tau < \log \frac{3}{2}.
\]
By putting together these results we arrive to
\[
-\frac{1}{2} < \sum_{p} \frac{1}{p^\tau}+ \log(\tau-1) < \log\frac{3}{2} < \frac{1}{2}.
\]
Hence, for $\tau \in ]1, \frac{3}{2} [$, we have
 \[
\vert  \sum_{p}\frac{1}{p^\tau}+ \log (\tau-1) \vert \leq \frac{1}{2}
\]
or
\[
\sum_{p} \frac{1}{p^\tau} \leq \frac{1}{2}+ \log(\tau-1), \quad 1 < \tau < \frac{3}{2}.
\]
Finally, using $\log P_D( \tau)  \leq  \displaystyle \sum_{p} \frac{1}{p^s} + \frac{1}{2}$, we find that
\[
\log P_D( \tau)  \leq 1+ \log(\tau-1)
\]
or
\[
P_D(\tau ) \leq  e (\tau-1), \quad 1 < \tau < \frac{3}{2}.
\]
Hence the lemma.
\end{proof}
Before to state the following lemma we would like to make a comparative remark. Let 
${\bf T}= \R/\Z$ be the circle, and let  $f: {\bf T}\to \C$ be an integrable function. If the Fourier coefficients $\hat{f}(n)$ satisfy
\[ \sum_{n\in \Z} \vert \hat{f}(n) \vert^2< \infty,\]
then Carleson theorem \cite{Carleson} asserts that the sequence $\displaystyle (S_n(x)_{n\geq 0},\; S_n(x)= \sum_{\vert k\vert\leq n} \hat{f}(k)e^{2i\pi kx}  $
converges to $f(x)$ for almost all $x\in {\bf T}$. In particular  for every $\tau$ with $ \Re\tau > \frac{1}{2},\; \displaystyle \sum_{n \geq 1} \frac{ \mu(n) z^n}{n^\tau} $ converges for almost every  $ z \in {\bf T}$ . The next lemma is a form of  Jensen lemma adapted to the Dirichlet series
$\displaystyle \sum_{n \geq 1} \frac{ \mu(n) z^n}{n^\tau}, \; \vert z \vert \leq 1 $ and $\Re\tau > 1$. It differs considerably from what we got from Carleson's theorem.
\begin{lemma}
For every $\tau,\,\Re\tau > 1$, the  series $\displaystyle \sum_{n \geq 1} \frac{ \mu(n)}{n^\tau}z^n $  converges uniformly on the angle
\[\vert z \vert \leq 1,\: \vert \arg(z-1)  \vert < \frac{\pi}{2} - \delta, \]
 where $\delta \in (0, \frac{\pi}{2})$ is fixed
\end{lemma}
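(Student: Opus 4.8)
The plan is to split the admissible region --- a Stolz sector with vertex at $z=1$ inside the closed unit disc --- into a small neighbourhood of $z=1$ and the remaining compact part, to dispatch the compact part by the Weierstrass $M$-test, and to reduce the neighbourhood of $z=1$ to the Cahen--Jensen Lemma (Lemma~\ref{Cahen}) via the substitution $z=e^{-s}$.

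First I would fix a small $\eta>0$ and look at the part of the region with $|1-z|\ge\eta$. This is a compact subset of $\{|z|\le1\}$, and since $\Re\tau>1$ we have $|\mu(n)n^{-\tau}z^{n}|\le n^{-\Re\tau}$ there with $\sum_{n\ge1}n^{-\Re\tau}<\infty$, so the series converges uniformly on it by the Weierstrass $M$-test. (The same bound actually gives uniform convergence on all of $\{|z|\le1\}$; the real content of the lemma is that the convergence is controlled right up to the vertex $z=1$ in the manner needed afterwards, where for exponents $\Re\tau\le1$ only the more delicate tools --- Davenport's estimate \eqref{Bateman} or Carleson's theorem --- are available.)

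Next, near $z=1$ I would set $z=e^{-s}$, which maps a small disc about $z=1$ biholomorphically onto a neighbourhood of $s=0$, carrying $\{|z|\le1\}$ to $\{\Re s\ge0\}$, with $s=(1-z)+\tfrac12(1-z)^{2}+\cdots$ so that $\arg s=\arg(1-z)+o(1)$ as $z\to1$. Hence for $\eta$ small enough the image of the part of our region with $|1-z|\le\eta$ lies inside a Stolz angle $\{|\arg s|\le\alpha\}$ with $\tfrac{\pi}{2}-\delta<\alpha<\tfrac{\pi}{2}$ and containing $s=0$. In the variable $s$ the series is exactly the Dirichlet series $\sum_{n\ge1}a_{n}e^{-\lambda_{n}s}$ with $\lambda_{n}=n$ and $a_{n}=\mu(n)/n^{\tau}$; it converges at $s_{0}=0$, where it equals $\sum_{n\ge1}\mu(n)/n^{\tau}=1/\zeta(\tau)$, which is absolutely convergent since $\Re\tau>1$. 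Lemma~\ref{Cahen} then yields uniform convergence on $\{|\arg s|\le\alpha\}$, hence on the image of the near part, hence --- by continuity of $s=-\log z$ there --- on the near part itself; gluing the two pieces finishes the proof.

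I expect the one step calling for an actual computation to be the geometric claim above: that near $z=1$ the admissible region genuinely is a Stolz sector in the $s$-plane of half-angle strictly below $\pi/2$. Writing $1-z=re^{i\varphi}$ with $|\varphi|\le\tfrac{\pi}{2}-\delta$, one has $1-|z|^{2}=r(2\cos\varphi-r)\ge r(2\sin\delta-r)$, so for $r\le\eta$ small $1-|z|\ge\tfrac12(1-|z|^{2})\ge\tfrac14 r\sin\delta$, i.e. $|1-z|\le(4/\sin\delta)(1-|z|)$; this Stolz inequality transfers through $z=e^{-s}$ into the bound $|\arg s|\le\alpha<\pi/2$ used above. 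If one prefers to avoid the change of variable entirely, the same estimate feeds directly into the classical proof of Abel's theorem in a Stolz sector: with $A_{N}=\sum_{n\le N}\mu(n)/n^{\tau}$ (so $A_{N}\to1/\zeta(\tau)$ and $A_{N}-1/\zeta(\tau)=O(N^{1-\Re\tau})$) and the Abel summation identity $\sum_{n\le N}\tfrac{\mu(n)}{n^{\tau}}z^{n}=(1-z)\sum_{n\le N-1}A_{n}z^{n}+A_{N}z^{N}$, the bound $|1-z|\le C(1-|z|)$ forces the tail $(1-z)\sum_{n>M}(A_{n}-1/\zeta(\tau))z^{n}$ to zero uniformly.
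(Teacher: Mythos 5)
Your reading of the lemma is the literal one, and you are right that on that reading it has no content: for fixed $\Re\tau>1$ the Weierstrass $M$-test already gives uniform convergence on the whole closed disc, so no Stolz sector at $z=1$ is needed, and your $z=e^{-s}$ reduction to Lemma~\ref{Cahen} adds nothing. (In fact the printed region is worse than trivial: $|\arg(z-1)|<\tfrac{\pi}{2}-\delta$ forces $\Re z>1$, which is incompatible with $|z|\le 1$; you silently read $\arg(1-z)$.) The statement is garbled, and the paper's own proof, together with the sentence that immediately follows it (``$\lim_{\tau\to 1,\,\tau>1}{\mathscr M}_\tau(z)={\mathscr M}_1(z)$ uniformly with respect to $z$ in the closed unit disk''), shows what is actually meant: the angle lives in the Dirichlet variable, not in $z$. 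One regards $\sum_n \mu(n)n^{-\tau}z^n$ as a Dirichlet series $\sum_n a_n(z)\,n^{-s}$ with coefficients $a_n(z)$ built from $\mu(n)z^n$ and with $s=\tau-1$, and the claim is uniform convergence jointly in $z\in\overline{D}$ and in $s$ ranging over the Stolz angle $\Re s>0$, $|\arg s|\le\tfrac{\pi}{2}-\delta$ whose vertex is $s=0$, i.e.\ $\tau=1$. That is the statement the paper needs, and it is not trivial.

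The ingredient your proposal never touches is Davenport's estimate \eqref{Bateman}, which is the whole point of the paper's argument: it yields the uniform convergence of $\sum_n \mu(n)z^n/n$ on $|z|=1$, hence on $\overline{D}$ by the maximum principle, i.e.\ tails $b_p(z)$ that are $<\varepsilon$ for $p\ge p_\varepsilon$ independently of $z$. Abel summation performed on the factors $n^{-s}$ (the Cahen--Jensen mechanism, but applied at the vertex $\tau=1$ rather than at a point of absolute convergence) then gives $|S_Q-S_{P-1}|\le 2\varepsilon\bigl(1+\tfrac{1}{\sin\delta}\bigr)$ uniformly in $z\in\overline{D}$ and in $s$ in the angle. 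Your argument uses only the absolute convergence coming from $\Re\tau>1$ and therefore cannot be pushed to the boundary $\tau=1$; in particular it does not deliver the uniform limit ${\mathscr M}_\tau\to{\mathscr M}_1$ on $\overline{D}$ for which the lemma is invoked. Your instinct that ``the real content is near the vertex'' was correct, but the vertex is $\tau=1$, not $z=1$.
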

\begin{proof} Let $\displaystyle a_n(z) =  \mu(n)\,z^n $ be the coefficients of this Dirichlet series. By using  the fundamental estimate of Davenport 
\eqref{Bateman} we can show that  $\displaystyle b_p(z)= \sum_{n \geq p} a_n(z)$ converges uniformly to zero on the unit circle, hence on the closed unit disc, by the maximum principle. For every $ \varepsilon >0 $ there exists $p_\varepsilon \in \N^*$ such that $p \geq p_\varepsilon $ we have $\vert b_p (z) \vert < \varepsilon$. Let \[\displaystyle S_N=S_N(z,s) = \sum_{1 \leq n \leq N} \frac{a_n(z)}{n^s},\; \vert z \vert \leq 1,\;\Re s >0, \: \vert \arg s \vert \leq \frac{ \pi}{2}- \delta.\]  For $Q > P \geq p_\varepsilon$, we have by  partial summation 
\[
S_Q-S_{P-1} = \frac{b_Q}{Q^s} + b_{Q-1} \Big ( \frac{1}{ (Q-1)^s}-    \frac{1}{ Q^s}  \Big ) + \cdots + b_P  \Big ( \frac{1}{ P^s}-   \frac{1}{ (P+1)^s}  \Big )- \frac{b_{P-1}}{P^s}.
\]
and
\[
\vert S_Q - S_{P-1} \vert \leq \varepsilon \left ( \frac{1}{ \vert Q^s \vert } +\frac{1}{ \vert P^s \vert } + \left \vert \frac{1}{ (Q-1)^s}- \frac{1}{ \vert Q^s \vert } \right \vert = \cdots + \left \vert \frac{1}{ P^s}- \frac{1}{ \vert (P+1)^s \vert } \right \vert \right ).
\]
Now
\[
 \frac{1}{(k+1)^s }- \frac{1}{k^s} = s \int_{ \log k}^{\log(k+1)} e^{ - \lambda s} d \lambda,
 \]
 so, with $\sigma = \Re s$
 \[
 \left \vert  \frac{1}{(k+1)^s }- \frac{1}{k^s}  \right \vert \leq  \frac{ \vert s \vert}{ \sigma} \Big ( \frac{1}{k^\sigma }- \frac{1}{(k+1)^\sigma} \Big )
 \]
 and
 \[
  \vert S_Q-S_{P-1} \vert \leq 2 \varepsilon (1+ \frac{1}{ \sin \delta}),
  \]
  hence the lemma.
  \end{proof}
  As a consequence  we find that $\displaystyle  \lim_{\tau \to 1, \tau> 1} {\mathscr M}_\tau (z) = {\mathscr M}_1(z) $ uniformly with respect to $ z $ in the closed unit disk. So, if the sequence $ a = (a_k)_{k \geq 1} $ is reasonable  we will obtain that the function $ f \otimes {\mathscr M}_1 $, with $ f(z)= \sum_ {k \geq 1} a_k z^k $, satisfies the property of Besicovich. For example, if $ a \in l^1 $ (which also ensures that $ f \in H^2(D) $).
  
 \begin{theorem} If $f(z)= \sum_{k \geq 1} a_k z^k$ with $a= (a_k)_{k \geq 1}\in l^1$, then
  \[
  \sum_{1 \leq h \leq m} (f \otimes {\mathscr M}_1)(e^{2 i \pi \frac{h}{m}}) =0.
  \]
  \end{theorem}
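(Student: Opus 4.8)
The plan is to prove the vanishing first for real $\tau>1$ and then pass to the limit $\tau\downarrow1$, invoking precisely the two lemmas just established. First I would fix $\tau\in(1,\tfrac{3}{2})$. Since $a\in\ell^1\subset\ell^2$ we have $f\in H^2(D)$, hence $f\otimes{\mathscr M}_\tau\in H^2(D)$, and $\ell^1$-summability justifies Fubini for the double series as well as the collapsing of the finite $h$-sum; this is exactly the identity recorded just above the two lemmas:
\[
\sum_{1\le h\le m}(f\otimes{\mathscr M}_\tau)\bigl(e^{2i\pi\frac hm}\bigr)
=\frac1{m^{\tau-1}}\sum_{k\ge1}a_k\sum_{j\ge1}\frac{\mu_{jD(m,k)}}{j^\tau},
\qquad D(m,k)=\delta(k,m)=\frac{\lcm(m,k)}{k}.
\]
The next step is to apply the first of the two lemmas with $D=D(m,k)$, which bounds $\bigl|\sum_{j\ge1}\mu_{jD(m,k)}\,j^{-\tau}\bigr|$ by $e(\tau-1)/\zeta(\tau)$ uniformly in $k$ and $m$. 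Summing this against $a\in\ell^1$ and reinserting $m^{1-\tau}$ gives
\[
\Bigl|\sum_{1\le h\le m}(f\otimes{\mathscr M}_\tau)\bigl(e^{2i\pi\frac hm}\bigr)\Bigr|
\le\frac{e\,\|a\|_{\ell^1}}{m^{\tau-1}}\cdot\frac{\tau-1}{\zeta(\tau)}\;\longrightarrow\;0\qquad(\tau\to1^+),
\]
since $\zeta(\tau)\to+\infty$ and $m^{\tau-1}\to1$.

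In parallel I would use the second of the two lemmas, which — as the text records right after it — yields ${\mathscr M}_\tau\to{\mathscr M}_1$ uniformly on the closed unit disk $\overline D$ as $\tau\to1^+$. Writing $(f\otimes{\mathscr M}_\tau)(z)=\sum_{k\ge1}a_k{\mathscr M}_\tau(z^k)$ and using $a\in\ell^1$ again, one obtains
\[
\bigl\|\,f\otimes{\mathscr M}_\tau-f\otimes{\mathscr M}_1\,\bigr\|_{\infty,\overline D}
\le\|a\|_{\ell^1}\,\bigl\|\,{\mathscr M}_\tau-{\mathscr M}_1\,\bigr\|_{\infty,\overline D}\;\longrightarrow\;0,
\]
so that $\sum_{1\le h\le m}(f\otimes{\mathscr M}_\tau)(e^{2i\pi h/m})\to\sum_{1\le h\le m}(f\otimes{\mathscr M}_1)(e^{2i\pi h/m})$ as $\tau\to1^+$. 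Comparing this with the limit $0$ found in the previous step forces the left-hand side to be $0$, which is the desired conclusion.

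The point that needs the most care is the coordinated use of the two lemmas through the hypothesis $a\in\ell^1$: on one side it is what makes the $k$-summation in the first estimate converge with a constant uniform in $(m,k)$; on the other side it is what carries the uniform convergence ${\mathscr M}_\tau\to{\mathscr M}_1$ over to $f\otimes{\mathscr M}_\tau\to f\otimes{\mathscr M}_1$. One should also check carefully that the displayed $\tau>1$ identity is fully justified under $a\in\ell^1$ — absolute convergence of $\sum_{k,j}$, so that reordering and the use of $\sum_{1\le h\le m}e^{2i\pi nhk/m}=m$ for $n\in\delta(k,m)\N^*$ (and $0$ otherwise) are valid — and keep straight the identification $D(m,k)=\delta(k,m)=\lcm(m,k)/k$. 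For comparison, a limit-free variant is also available: by Davenport's estimate \eqref{Bateman} together with the maximum principle (as recalled after \theoref{th1}), ${\mathscr M}_1$ is continuous, hence bounded, on $\overline D$, so $\sum_k a_k{\mathscr M}_1(z^k)$ converges uniformly there, may be interchanged with the finite $h$-sum, and each inner sum $\sum_{1\le h\le m}{\mathscr M}_1(e^{2i\pi hk/m})=\frac{m}{\delta(k,m)}\sum_{j\ge1}\mu_{j\delta(k,m)}\,j^{-1}$ vanishes by \lemref{subseries}(2); but it is the $\tau\downarrow1$ version that the two preceding lemmas were designed to feed.
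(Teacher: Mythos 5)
Your proposal is correct and follows essentially the same route as the paper: establish the identity for $\tau\in(1,\tfrac32)$, use the first lemma (the bound $P_D(\tau)\le e(\tau-1)$) to force the right-hand side to $0$ as $\tau\to1^+$, and use the second lemma (uniform convergence of ${\mathscr M}_\tau\to{\mathscr M}_1$ on $\overline D$, transported to $f\otimes{\mathscr M}_\tau$ via $a\in\ell^1$) to identify the limit of the left-hand side. You have merely spelled out the details that the paper compresses into one sentence, so no further comment is needed.
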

  \begin{proof}
  The two lemmas above ensure the possibility of passing to the limit  $ \tau \to 1, \; 1 <\tau <\frac{3}{2} $ in the following relation:
   \[
 \sum_{1 \leq h \leq m} (f \otimes {\mathscr M}_\tau) (e^{ 2 i \pi \frac{h}{m} }) = \sum_{k \geq 1} a_k \left ( \sum_{ 1 \leq h \leq m} {\mathscr M}_\tau( e^{2 i \pi \frac{h}{m}}) \right ) = \frac{1}{ m^{\tau-1}} \sum_{ k \geq 1}a_k \sum_{ j \geq 1} \frac{ \mu_{jD(m,k)}}{j^\tau}.
 \]
 \end{proof}
 If $\displaystyle \sum_{n \geq1} a_n z^n \in H^2(D)$ is orthogonal to the subspace generated by the family of
  \[
 \sum_{ 1 \leq \nu \leq N} c_\nu  \left ( \sum_{ l \geq 1} (-1)^l \frac{\zeta(2 l)}{2 l!} (n \pi \theta_\nu)^{2 l} \right ),
 \]
 that is
 \[
 \sum_{n \geq 1} \overline{a_n} \left[ 
 \sum_{ 1 \leq \nu \leq N} c_\nu  \left (   \sum_{ l \geq 1} (-1)^l \frac{\zeta(2 l)}{2 l!} (n \pi \theta_\nu)^{2 l} \right ) \right ] =0.
 \]
  \begin{remark}
Let $f(z) = \sum_{k \geq 2} a_k z^k$ be a power series of radius at least equal to $R >0$. For $\vert z \vert <R $ and $\Re s > \frac{1}{2}$ we have \cite{CHP}, 
\cite{G.J}
\[
\sum_{ n \geq 1} f(\frac{z}{n^s}) = \sum_{ k \geq 2} a_k \zeta(k s) z^k.
\]
\begin{proof} For $z \in D(0,R) $ we choose $\varepsilon >0 $ and $\alpha >0$ such that $\vert z \vert < R-\alpha < R-\varepsilon <R$. There is  $k_0 \in \N^* $ such that, for $k > k_0$  we have $\displaystyle \vert a_k \vert \vert z \vert ^k< \frac{(R-\alpha)^k}{(R-\varepsilon)^k}$, which ensures the convergence of the series $\sum_{k \geq 2} \vert a_k \vert \vert z \vert^k$. Since  $\Re (k s) >1 $ for $\Re s > \frac{1}{2}$ and $k \geq 2$, the series  $\displaystyle \sum_{ k \geq 2} \vert a_k \vert \vert z \vert^k \sum_{n \geq 1} \frac{1}{n^{ks}} $ is summable. We can therefore apply Fubini's theorem to exchange the summations.
\end{proof}
As an example we take $\displaystyle a_{2n} =(-1)^n \frac{1}{2n!} $ and $a_{2n+1}=0 $ for every $n \geq 1$. This gives
 \[\displaystyle f(z)= \sum_{ k \geq 2} a_k z^k = \sum_{k \geq 1} \frac{ (-1)^k }{2k!} z^{2k} = \cos z -1 \] which converges for every $z\in \C$ and insure, with $s=1$,
\begin{equation}\label{Flett}
 \sum_{j\geq 1} \big ( \cos(\frac{z}{j})-1\big) = \sum_{k \geq 1} (-1)^k \frac{ \zeta(2k)}{2k!} z^{2k}.
 \end{equation}
 We investigated some functions related to the right side of \eqref{Flett} in \cite{SebGay}.
 \end{remark} 
 \section{Asymptotic expansion}
In this section we give the asymptotic expansion of the coefficients of the power series in $\displaystyle H_0^2(D)$ corresponding to
 $\displaystyle \sum_{1 \leq \nu \leq N} c_\nu \{\frac{ \theta_\nu}{x}\},\,\sum_{1 \leq \nu \leq N} c_\nu \theta_\nu = 0$ as element of $L^2(0, 1)$. This is given by the following result

\begin{theorem} The $n$-th coefficient $a_n$ of the power series belonging to $H^2(D)$ and corresponding to $\displaystyle \sum_{1 \leq \nu \leq N} c_\nu \{\frac{ \theta_\nu}{x}\} $ with $\displaystyle \sum_{1 \leq \nu \leq N} c_\nu \theta_\nu = 0$  is given by
\[
a_n =\frac{\sqrt 2}{\pi n} \Big(\displaystyle \sum_{1 \leq \nu \leq N} c_\nu (n \pi \theta_\nu)^{\sigma_0} \Big) o_{\sigma_0}(n)
\]
with $\sigma_0$  such that $\frac{2}{3} < \sigma_0  <1$ and \[o_{\sigma_0}(n) = \frac{1}{2 i \pi} \int_{-\infty}^{+ \infty} \Gamma(-(\sigma_0+i \tau)) \zeta(\sigma_0+ i \tau) \cos( \frac{ \pi}{2} (\sigma_0 +i \tau)) n^{i \tau} d \tau
\] tending to zero as $n$ tends to $ + \infty$.
\end{theorem}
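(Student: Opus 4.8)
The plan is to convert the power-series coefficient into an integral on $(0,1)$, expand the sine as a power series, recognise the resulting series in $\zeta(2k)$ through the identity \eqref{Flett}, and finish with one Mellin--Barnes contour shift. Under the isomorphism between $L^2(0,1)$ (Fourier sine series) and $H^2(D)$ one has $a_n=\sqrt2\int_0^1\bigl(\sum_\nu c_\nu\{\theta_\nu/x\}\bigr)\sin(\pi nx)\,dx$, so it suffices to analyse $a_n(\theta):=\sqrt2\int_0^1\{\theta/x\}\sin(\pi nx)\,dx$ for a single $\theta\in(0,1]$ and then sum. First I would compute the Mellin transform $M_\theta(s)=\int_0^1\{\theta/x\}x^{s-1}\,dx$: the substitution $x=\theta/u$ turns it into $\theta^s\int_\theta^\infty\{u\}u^{-s-1}\,du$, and splitting the integral at $u=1$ together with the classical formula $\zeta(s)=\frac{s}{s-1}-s\int_1^\infty(u-\floor u)u^{-s-1}\,du$ yields $M_\theta(s)=\frac{\theta}{s-1}-\frac{1}{2s}-\frac{\theta^s\zeta(s)}{s}$, an entire function. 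Substituting $\sin(\pi nx)=\sum_{k\ge0}\frac{(-1)^k(\pi nx)^{2k+1}}{(2k+1)!}$ and integrating term by term — legitimate because $\{\theta/x\}$ is bounded and the series converges uniformly on $[0,1]$ — gives $a_n(\theta)=\sqrt2\sum_{k\ge0}\frac{(-1)^k(\pi n)^{2k+1}}{(2k+1)!}M_\theta(2k+2)$, and inserting the three terms of $M_\theta(2k+2)$ splits $a_n(\theta)$ into $\sqrt2\,\theta\,\mathrm{Si}(\pi n)$, a $\theta$-free elementary term $-\frac{\sqrt2(1-(-1)^n)}{2\pi n}$, and $\frac{\sqrt2}{\pi n}\sum_{j\ge1}\bigl(\cos(\pi n\theta/j)-1\bigr)$, the last being exactly \eqref{Flett} evaluated at $z=\pi n\theta$.

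Next I would sum over $\nu$. The piece $\sqrt2\,\mathrm{Si}(\pi n)\sum_\nu c_\nu\theta_\nu$ vanishes identically because $\sum_\nu c_\nu\theta_\nu=0$ — this is the one place the hypothesis is truly needed — so, up to the elementary $O(1/n)$ term, $a_n=\frac{\sqrt2}{\pi n}\sum_\nu c_\nu\sum_{j\ge1}\bigl(\cos(\pi n\theta_\nu/j)-1\bigr)$. To handle $\Sigma(y):=\sum_{j\ge1}(\cos(y/j)-1)$ I would invoke the Mellin pair $\int_0^\infty(\cos u-1)u^{s-1}\,du=\Gamma(s)\cos(\tfrac{\pi s}{2})$, valid for $-2<\Re s<0$; inverting it and exchanging summation with integration is legitimate along any line $\Re s=c\in(-2,-1)$, since there both $\int_{(c)}|\Gamma(s)\cos(\tfrac{\pi s}{2})|\,|ds|$ and $\sum_j j^{c}$ converge, and one gets $\Sigma(y)=\frac{1}{2\pi i}\int_{(c)}\Gamma(s)\cos(\tfrac{\pi s}{2})\zeta(-s)y^{-s}\,ds$; the substitution $w=-s$ rewrites this as $\frac{1}{2\pi i}\int_{(\sigma_1)}\Gamma(-w)\cos(\tfrac{\pi w}{2})\zeta(w)y^{w}\,dw$ with $\sigma_1=-c\in(1,2)$.

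The decisive step is to push the contour from $\Re w=\sigma_1$ to $\Re w=\sigma_0$ with $\tfrac23<\sigma_0<1$. The only singularity crossed is the simple pole of $\zeta$ at $w=1$; since $\Gamma(-w)\cos(\tfrac{\pi w}{2})$ is in fact regular at $w=1$ with value $-\tfrac{\pi}{2}$ (the zero of the cosine kills the pole of $\Gamma(-w)$), its residue there is $-\tfrac{\pi y}{2}$, and with $y=\pi n\theta_\nu$ the resulting residue contribution to $a_n$ is $-\tfrac{\pi\sqrt2}{2}\sum_\nu c_\nu\theta_\nu=0$, once more by the hypothesis. Hence only the integral over $\Re w=\sigma_0$ remains, $a_n=\frac{\sqrt2}{\pi n}\cdot\frac{1}{2\pi i}\int_{(\sigma_0)}\Gamma(-w)\cos(\tfrac{\pi w}{2})\zeta(w)\bigl(\sum_\nu c_\nu(\pi n\theta_\nu)^{w}\bigr)\,dw$; writing $w=\sigma_0+i\tau$ and $(\pi n\theta_\nu)^{w}=(\pi n\theta_\nu)^{\sigma_0}(\pi n\theta_\nu)^{i\tau}$ brings this to the stated shape, with kernel $\Gamma(-(\sigma_0+i\tau))\zeta(\sigma_0+i\tau)\cos(\tfrac{\pi}{2}(\sigma_0+i\tau))$ and oscillating factor $n^{i\tau}$ (the residual phase $(\pi\theta_\nu)^{i\tau}$ staying inside the $\nu$-sum in a fully precise version, and an additional $O(1/n)$ correction $-\frac{\sqrt2(1-(-1)^n)}{2\pi n}\sum_\nu c_\nu$ being present when $\sum_\nu c_\nu\ne0$).

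The main obstacle — and the reason for the precise range $\tfrac23<\sigma_0<1$ — is the convergence bookkeeping for this contour shift. On a vertical line the exponential growth of $\cos(\tfrac{\pi w}{2})$ exactly cancels the exponential decay of $\Gamma(-w)$, leaving a kernel of size $\asymp|\tau|^{-\Re w-\frac12}\,|\zeta(\Re w+i\tau)|$; the convexity bound $|\zeta(\sigma+i\tau)|\ll|\tau|^{(1-\sigma)/2+\varepsilon}$ then makes the exponent $-\tfrac32\Re w+\varepsilon$, which is $<-1$ precisely when $\Re w>\tfrac23$. This both lets the horizontal sides of the shifting rectangle vanish as their height grows and shows the kernel lies in $L^1(\R)$, so that $o_{\sigma_0}(n)$ — essentially the Fourier transform of that kernel evaluated at $\log n$ — tends to $0$ by the Riemann--Lebesgue lemma.
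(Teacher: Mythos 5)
Your proposal is correct, and its analytic core --- the Mellin--Barnes representation of $\sum_{l\ge 1}(-1)^l\zeta(2l)x^{2l}/(2l)!$ on the line $\Re s=\sigma_0$, with $\sigma_0>\tfrac23$ forced by the Stirling bound for $\Gamma$ together with the convexity bound for $\zeta$, followed by the Riemann--Lebesgue lemma --- is the same as the paper's. You differ in two worthwhile ways. First, you actually prove the link between the coefficient $a_n$ and that series: the computation $M_\theta(s)=\int_0^1\{\theta/x\}x^{s-1}\,dx=\frac{\theta}{s-1}-\frac{1}{2s}-\frac{\theta^s\zeta(s)}{s}$ and the resulting three-term decomposition of $a_n(\theta)$ (a $\theta\,\mathrm{Si}(\pi n)$ term killed by $\sum_\nu c_\nu\theta_\nu=0$, an elementary parity term, and the $\zeta(2l)$-series recognized as \eqref{Flett}) is entirely absent from the paper, which starts directly from the asymptotics of $f(x)=\sum_l(-1)^l\zeta(2l)x^{2l}/(2l)!$ without ever connecting $f$ to $a_n$. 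Second, your contour geometry is the reverse of the paper's: you start from the convergent Mellin inversion on $\Re w\in(1,2)$ and shift left past the single pole at $w=1$, whereas the paper integrates over a rectangle whose right edge is $\Re s=M+\tfrac12$, recovers the Taylor coefficients as residues at $s=2,3,\dots$, and must additionally show $I_M\to 0$ as $M\to\infty$; your route buys a shorter argument at the cost of justifying the initial termwise Mellin inversion, which you do. Your bookkeeping also exposes two imprecisions in the statement that the paper glosses over: the leftover term $-\frac{\sqrt2\,(1-(-1)^n)}{2\pi n}\sum_\nu c_\nu$, absent only if $\sum_\nu c_\nu=0$, and the fact that the oscillating factor is really $(\pi n\theta_\nu)^{i\tau}$ inside the $\nu$-sum rather than a common $n^{i\tau}$. (A minor point in your favor: the pole at $w=1$ is simple, since the zero of the cosine cancels the pole of $\Gamma(-w)$, and your residue $-\tfrac{\pi y}{2}$ is the correct value; the paper's $-\pi x$ from a claimed double pole is off by a factor of two, though this is immaterial because the term cancels in the $\nu$-sum either way.)
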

\begin{proof} 
We  look for an asymptotic expansion of the function $\displaystyle  f (x) = \sum_{l \geq 1} (-1)^ l \frac {\zeta (2 l)} {2 l! }x^{2 l} $ when $ x$ tends to  $ +\infty $. For this we consider the line integral 
\[ I_ {M, T} (x) = \frac{1}{2i\pi} \int_ {\gamma_ {M, T}} \Gamma (-s) \zeta (s) \cos (\frac {\pi}{2} s) x^s ds ,\] 
where $\gamma_ {M, T} $ is the rectangular circuit which sides are parallel to the axis, and  whose vertices are the points $ \sigma_0 \pm iT $ and $ M+\frac{1}{2} \pm i T $ for $ M \in \N^{*},\, T>0 $. We recall that
\begin{enumerate}
\item For every fixed $\sigma \in \R $ there exists $C_\sigma > 0 $ such that $\displaystyle \vert \Gamma( \sigma+ i \tau) \vert \leq C_\sigma \vert \tau \vert^{ \sigma - \frac{1}{2}} e^{- \frac{\pi}{2} \vert \tau \vert}$.
\item For $0 \leq \sigma \leq 1$ and $\varepsilon >0 $, there exists $D_\varepsilon >0 $ such that $\displaystyle \vert \zeta( \sigma+ i \tau) \vert \leq D_\varepsilon \vert \tau \vert^{\frac{1-\sigma}{2} + \varepsilon}$.
\item We have $\displaystyle \vert \cos \frac{ \pi}{2} (\sigma+ i \tau) \vert \leq  e^{ \frac{\pi}{2} \vert \tau \vert }$.
\end{enumerate}
Hence for  $s = \sigma_0+i \tau$ with $\sigma_0 \in (\displaystyle \frac{2}{3},1) $ we have the estimate
\[
\left \vert \Gamma(-s)  \zeta(s) \cos( \frac{\pi}{2} s) x^s \right \vert \leq C_{-\sigma_0} C_\varepsilon \frac{x^{\sigma_0}}{ \vert \tau \vert^{ \sigma_0+ \frac{ \sigma_0}{2} -\varepsilon}},
\]
with $\displaystyle \sigma_0+ \frac{ \sigma_0}{2} - \varepsilon >1$ for $\varepsilon >0 $ small enough, since $ \sigma_0+ \displaystyle  \frac{ \sigma_0}{2} >\displaystyle \frac{2}{3}+\frac{1}{3} =1$. 
\subsection{The  integral on the line $\Re s = \sigma_0$} We recall that the function 
\[\R \ni \tau \to  \Gamma(-(\sigma_0+i \tau)) \zeta(\sigma_0+ i \tau) \cos( \frac{ \pi}{2} (\sigma_0 +i \tau)) x^{\sigma_0} \in L^1(\R).\]
\subsection{Integrals on  $ \Re s = M + \displaystyle \frac{1}{2}$  and on horizontal lines}
For every  $s \in \C $ and $M \in \N^*$ such that $\Re s = M+\displaystyle  \frac{1}{2}$ we have $ s- (-\overline{s} ) = 2 \Re s =2M+1$ and
\[
 \Gamma(-s)  = \frac{ \Gamma( -s+2M+1)}{(-s)(1-s) \ldots (2M-s)} = \frac{\overline{ \Gamma(s)}}{ (-s)(1-s) \ldots (2M-s)}.
 \]
 According to the Stirling formula, for $ 0 \leq  \vert  {\arg} s \vert  \leq \frac{ \pi}{2}$
 \[
 \vert \Gamma(s) \vert \leq  C \vert s \vert^{\sigma-\frac{1}{2}}e^{-\sigma} e^{ -\frac{ \pi}{2} \vert \tau \vert }.
 \]
 Then, with $\Re s= \sigma = M+ \frac{1}{2}$, we have $\vert \Gamma(s) \vert \leq C \vert s \vert^M e^{ -(M+\frac{1}{2})} e^{ -\frac{ \pi}{2} \vert \tau \vert }$. Hence
 \[
 \vert \Gamma(-s) \vert \leq \frac{C \vert s \vert^M e^{ -(M+\frac{1}{2})} e^{ -\frac{ \pi}{2} \vert \tau \vert }}{ \vert s(s-1) \ldots (s-2M) \vert }.
 \]
 This guarantees the integrability of the modulus of the function $\Gamma(-s) \zeta(s) \cos( \frac{\pi}{2} s ) x^s $ on the line $\Re s = M+ \frac{1}{2}$ as $ \vert \zeta(s) \vert \leq 2 $,  $\vert \cos( \frac{\pi}{2} s ) \vert \leq e^{ \frac{ \pi}{2} \vert \tau \vert } $ and so the modulus of the integrand is less than
 \[
 \frac{ C \vert s \vert^M e^{-(M+ \frac{1}{2})}x^{M + \frac{1}{2}}}{ \vert s(s-1) \ldots (s-2M) \vert} \leq C_M \frac{1}{ \vert \tau \vert^{M+1}},
 \] $C_M $  being a positive constant, independent of $M$.
 In the same vein we can obtain an upper bound  of  $\displaystyle \frac{ \vert s \vert^M}{\vert s(s-1)\ldots (s-2M) \vert }$ on $\Re s= M+ \frac{1}{2}$ by grouping $(s-1)$ and $(s-2M)$,  $(s-2) $ and $(s-(2M-1)) \ldots $,  $(s-M)$ and $(s-(M+1)) $ to obtain
 \[
  \frac{ \vert s \vert^M}{\vert s(s-1)\ldots (s-2M) \vert } = \frac{ \vert s\vert^{M-1}}{(\tau^2+(M-1)^2) \ldots (\tau^2+\frac{1}{4} )} \leq \frac{\vert \tau \vert^{M-1}}{\vert \tau \vert^{2M}} = \frac{1}{ \vert \tau \vert^{M+1}}.
  \]
  We thus obtain the absolute convergence of the integral on $\Re s = M+\frac{1}{2}$ and, in the same way, the limit to zero of the integrals on the horizontal segments by using a  majorization, uniform  in $\sigma \in [\sigma_0, M+ \frac{1}{2} ]$,  of $\displaystyle \frac{ \vert s \vert^M}{\vert s(s-1)\ldots (s-2M) \vert }$.
   \subsection{Evaluation of the residues}
  \begin{enumerate}
  \item{ For $n \geq 2$}. 
  The poles of $\Gamma(-s) $ greater than $\sigma_0$ are  $s=n, n \geq 1$ and are of respective residues  $ \displaystyle \frac{(-1)^n}{n!}$. We find that if $n$ is odd the residue of $\Gamma(-s) \zeta(s) \cos( \frac{\pi}{2} s) x^s$ at $n$ vanishes, and if  $n= 2 l$ is even, the residue is $\displaystyle -(-1)^l \frac{\zeta(2l)}{2 l!} x^{2 l}$.
  \item For $n=1$ we have a double pole and by computing \[\lim_{ s\to 1} \frac{d}{ds}\big(  (s-1)^2 \Gamma(-s) \zeta(s) \cos( \frac{\pi}{2} s) x^s\big)\]   we find that the residue at $1$ is $-\pi x$.
  \end{enumerate}
The intermediate result  we obtained is the equality
\begin{align*}
 -\pi x  - \sum_{1 \leq l \leq M} (-1)^l \frac{ \zeta(2 l)}{2 l!} x^{2 l} = &\frac{1}{2i \pi} \int_{ \sigma_0-i \infty}^{ \sigma_0+ i \infty} \Gamma(-s) \zeta(s) \cos( \frac{\pi}{2} s) x^s ds\\
  &-  \frac{1}{2i \pi} \int_{ M+\frac{1}{2}-i \infty}^{ M+ \frac{1}{2}+ i \infty} \Gamma(-s) \zeta(s) \cos( \frac{\pi}{2} s) x^s ds.
\end{align*}
We need to analyze $\displaystyle I_ M(x)= \frac{1}{2i \pi} \int_{ M+\frac{1}{2}-i \infty}^{ M+ \frac{1}{2}+ i \infty} \Gamma(-s) \zeta(s) \cos(\frac{\pi}{2} s) x^s ds$. We
  write $\displaystyle \cos(\frac{\pi}{2} s)= \frac{  (e^{i\frac{\pi}{2}})^s + (e^{-i\frac{\pi}{2}})^s}{2} $ and making the legitimate  interchange of summation and integration we get
 \[
 I_M(x) = \frac{1}{2} \sum_{n \geq 1} \left ( \frac{1}{2i \pi} \int_{M+\frac{1}{2}- i \infty}^{M+\frac{1}{2} + \infty} \Gamma(-s) \Big( \frac{ e^{i \frac{\pi}{2}}x}{n}\Big )^s ds 
  +\frac{1}{2i \pi} \int_{M+\frac{1}{2}- i \infty}^{M+\frac{1}{2} + \infty} \Gamma(-s) \Big( \frac{ e^{-i \frac{\pi}{2}}x}{n}\Big )^s ds \right ) ,
  \]
  or
  \[
  I_M(x)= \frac{1}{2} \sum_{ n \geq 1} \left( J_M( \frac{ix}{n})+ J_M( \frac{-ix}{n}) \right)
  \]
  with
  \[
  J_M(z) = \frac{1}{2i \pi} \int_{M+\frac{1}{2}-\infty}^{M + \frac{1}{2}+ \infty} \Gamma(-s) z^s ds=  \frac{1}{2i \pi} \int_{-(M+\frac{1}{2})-\infty}^{-(M + \frac{1}{2})+ \infty} \Gamma(s) z^{-s} ds.
  \]
  According to \cite{higher} (7.3, p.348), the inverse Mellin transform of $e^{-ias} \Gamma(s)$, with the  conditions $\vert \Re a \vert \leq \frac{ \pi}{2},\; -m < \Re s < 1-m,\; m=1,2, \ldots $, is the function $\displaystyle  e^{-t e^{ia}} - \sum_{0 \leq r \leq m-1} \frac{ (-te^{ia})^r}{r!} $.
  For $\displaystyle a =- \frac{\pi}{2} + i \log \frac{n}{x} $ we have $\displaystyle \vert \Re a \vert = \frac{\pi}{2} $,\,
  $\displaystyle e^{ia}= -i \frac{x}{n}$ and
  \[
  J_M (i \frac{x}{n} )= \sum_{r \geq M+1} (-i)^r \frac{x^r}{n^r r!},\quad J_M (-i \frac{x}{n}) = \sum_{r \geq M+1} (i)^r \frac{x^r}{n^r r!}.
  \]
  Thus
  \[
  I_M(x)= \sum_{ n \geq 1} \left ( \sum_{r \geq M+1} ( i^r +(-i)^r) \frac{x^r}{n^r r!} \right).
  \]
  Since
  \[
   \sum_{ n \geq 1} \sum_{r \geq M+1} \frac{ \vert x \vert^r}{n^r r!} \leq e^{\vert x \vert} \sum_{n \geq 1} \frac{1}{n^{M+1}} = \zeta(M+1) e^{\vert x \vert} <+ \infty
   \]
   and $\displaystyle \lim_{ M \to + \infty} \sum_{ r \geq M+1}  ( i^r +(-i)^r) \frac{x^r}{n^r r!} =0$ we get \[\lim_{M \to + \infty} I_M(x)=0,\]
   and finally
   \[
   \sum_{ l \geq 1} (-1)^l \frac{ \zeta(2 l)}{2 l!} x^{2 l} = \frac{1}{2i \pi} \int_{ \sigma_0 - i \infty}^{ \sigma_0+ i \infty} \Gamma(-s) \zeta(s) \cos( \frac{\pi}{2} s) x^s ds +\, \pi x.
   \]
   We also see from the properties of the Fourier transform of an integrable function
 \begin{align*}
     \frac{1}{2i \pi} \int_{ \sigma_0 - i \infty}^{ \sigma_0+ i \infty} \Gamma(-s) \zeta(s)& \cos( \frac{\pi}{2} s) x^s ds\\
    &=\frac{x^{ \sigma_0}}{2 \pi} \int_\R \Gamma( - \sigma_0-i \tau) \zeta( \sigma_0+ i \tau) \cos ( \frac{ \pi}{2} (\sigma_0+ i \tau)) e^{i (\log x) \tau } d \tau \\
     &= x^{\sigma_0} o_{ \sigma_0}(x)
     \end{align*}
     If we replace $x$ by $\pi \nu \theta_{\nu},\,1\leq \nu \leq N$ and summing over $\nu$,  the terms coming from $\pi x$ disappear, since $\displaystyle \sum_{1\leq \nu \leq N}
     c_{\nu} \theta_{\nu}= 0$. This therefore gives the statement of the theorem.

   \end{proof}

\section{ Analytic continuation of ${\mathscr M}_s$ outside the unit disk.} 
  One of the purposes of this paragraph is to show that the unit circle is a natural boundary for $ {\mathscr M}_s $ for $ \Re s> 0 $. This can be easily done when $ s = k> 0 $ is an integer. Since 
 $ (\mu(n))_ {n \geq 1}=   (\mu_n)_ {n \geq 1}$ is a  non-periodic  sequence taking values in the finite set $ \{- 1,0,1 \} $ and the power series $ {\mathscr M}_0 $ is a non rational  function then, by Polya-Carlson theorem, the unit circle is a natural boundary. When $ k \geq 1 $ we can write
 \[ \left(z \frac{d}{dz}\right)^k {\mathscr M}_k= {\mathscr M}_0, \]
 showing that $ {\mathscr M}_0 $  is analytic where $ {\mathscr M}_k $ is analytic (compare with \eqref{iterated}). Hence the unit circle is a natural boundary for $ {\mathscr M}_k $ for every integer $k$. Alternatively we have
 \[{\mathscr M}_k (z) = \frac{1}{\Gamma k)} \int_0^ \infty {\mathscr M}_0 (e^{-t} z) t^{k-1} dt.\] 
 We introduce the operators $ T_s, \Re s> 0 $ defined on $ {\mathscr M}_0 $ by a Mellin's integral
 \[
  T_s({\mathscr M}_0)(z) = \frac{1}{\Gamma(s)} {\mathcal M} \left(  {\mathscr M}_0(e^{- \bullet} z)\right) (s), \quad \vert z \vert  <1.
  \]
  One can verifies that
  \[
  {\mathscr M}_s(z) = \frac{1}{\Gamma(s)} \int_0^\infty \sum_{ n \geq 1} \mu_n (e^{-t} z)^n t^{s-1} dt, \quad \vert z \vert  <1.
  \]
 Let $U\subset \BC $ be a star-shiped open set containing the origin. Let
 \[ {\mathcal O} _0 (U)=\left\{ f\,{\rm holomorphic\, on}\;  U,\, f(0)= 0 \right\} \]
 on which we define the family of operators
\[T_s(f)(z) =\frac{1}{\Gamma(s)} {\mathcal M}\left(f (e^{- \bullet} z)\right) (s), \quad z\in U,
 \]
 and show that it has a semi-group property (moreover holomorphic). 
  \begin{proposition} The family of operators $(T_s)_{{\Re}s >0}$ defined on ${\mathcal O}_0(U) $ by
 \[
 T_s(f)(z)= \frac{1}{ \Gamma (s)} \int_0^\infty f(e^{-t}z)t^{s-1} dt, \quad z \in U,
 \]
verifies
\[
T_{s_1+s_2}= T_{s_1} \circ T_{s_2}.
\]
\end{proposition}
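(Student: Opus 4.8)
The plan is to obtain the composition law $T_{s_1+s_2}=T_{s_1}\circ T_{s_2}$ directly from the definition: this is the semigroup property of Riemann--Liouville fractional integration in the Mellin picture, and the Beta-function identity $B(s_1,s_2)=\Gamma(s_1)\Gamma(s_2)/\Gamma(s_1+s_2)$ will be the engine of the proof. As a consistency check, the substitution $w=e^{-t}z$ turns $T_1(f)(z)=\int_0^\infty f(e^{-t}z)\,dt$ into $\int_0^z f(w)\,\frac{dw}{w}=\vartheta^{-1}f(z)$, in accordance with \eqref{iterated}.

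First I would check that $T_s$ maps $\mathcal{O}_0(U)$ into itself, which also makes the composition $T_{s_1}\circ T_{s_2}$ meaningful. Fix $z\in U$; since $U$ is star-shaped with respect to the origin, the whole segment $[0,z]$, hence each point $e^{-t}z$ with $t\ge 0$, lies in $U$. Picking a compact neighbourhood $K\subset U$ of $[0,z]$ gives $|f(w)|\le M$ on $K$, while $f(0)=0$ together with holomorphy gives $|f(w)|\le C|w|$ on some disc $\{|w|\le r\}\subset U$. Hence $|f(e^{-t}z)|\le M$ for bounded $t$ and $|f(e^{-t}z)|\le C|z|\,e^{-t}$ for large $t$, so
\[
\int_0^\infty \bigl|f(e^{-t}z)\bigr|\,t^{\Re s-1}\,dt<\infty
\]
for every $s$ with $\Re s>0$: the factor $t^{\Re s-1}$ is integrable at $0$ because $\Re s>0$, and the exponential decay coming from $f(0)=0$ controls the tail. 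Differentiating under the integral sign, with these bounds taken locally uniformly in $z$ on compact subsets of $U$, shows $z\mapsto T_s(f)(z)$ is holomorphic on $U$, and $T_s(f)(0)=0$ since $f(0)=0$.

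For the composition itself, unfolding the definitions and writing the resulting iterated integral as a genuine double integral over $(0,\infty)^2$ — legitimate by the absolute integrability just established — gives, for $z\in U$,
\[
T_{s_1}\bigl(T_{s_2}(f)\bigr)(z)=\frac{1}{\Gamma(s_1)\Gamma(s_2)}\int_0^\infty\!\!\int_0^\infty f\bigl(e^{-(t+u)}z\bigr)\,t^{s_1-1}u^{s_2-1}\,dt\,du .
\]
Fixing $u$ and substituting $v=t+u$, then applying Fubini on the region $\{0<u<v<\infty\}$, and finally substituting $u=vw$ in the inner integral, reduces the bracketed constant to the Beta integral $\int_0^1 w^{s_2-1}(1-w)^{s_1-1}\,dw=\Gamma(s_1)\Gamma(s_2)/\Gamma(s_1+s_2)$, leaving
\[
T_{s_1}\bigl(T_{s_2}(f)\bigr)(z)=\frac{1}{\Gamma(s_1+s_2)}\int_0^\infty f(e^{-v}z)\,v^{s_1+s_2-1}\,dv=T_{s_1+s_2}(f)(z).
\]

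The substantive point, and the only place care is needed, is the uniform estimate on $f(e^{-t}z)$ that simultaneously tames the singularity of $t^{s-1}$ at $t=0$ and forces decay at $t=\infty$; once it is in hand, Tonelli/Fubini, the change of variables, and the Beta identity close the argument, and the same estimates, now uniform in $s$ on compact subsets of $\{\Re s>0\}$, give the asserted holomorphic dependence on $s$. The one hypothesis it would be easy to overlook is the star-shapedness of $U$, which is exactly what guarantees $e^{-t}z\in U$ for all $t\ge 0$.
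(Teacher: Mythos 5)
Your proof is correct and follows essentially the same route as the paper: unfold the composition into a double integral over $(0,\infty)^2$, change variables so that $v=t+u$ becomes one coordinate, and recognize the Euler Beta integral $\Gamma(s_1)\Gamma(s_2)/\Gamma(s_1+s_2)$ in the inner integral (the paper uses the rotation $v=t+u$, $w=t-u$ followed by $w=\rho v$, which is only cosmetically different from your $v=t+u$ plus $u=vw$). Your preliminary estimates justifying convergence, holomorphy, and Fubini are in fact spelled out more carefully than in the paper, which merely notes the exponential decay of $f(e^{-t}z)$ coming from $f(0)=0$.
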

Note  first that $\displaystyle  \lim_{t \to + \infty} f(e^{- t} z) = f (0) = 0 $ which ensures, for $ f \not \equiv 0 $, the existence of a integer $ N \geq 1 $ and a constant $ a_N \neq 0 $ 
such that $ f (e^{- t} z) = e^{- Nt} (a_N z^N + \cdots )$ for $t$ large and guarantees the convergence of the integral. One checks the holomorphicity of $ T_s (f) $ in $ U $ as usual. To show the  semi-group property we consider $ f \in {\mathcal O}_0 (U), z \in U, s_1, s_2 \in \BC,  {\Re} s_i> 0, i = 1,2 $, then
\begin{equation*} \begin{split}
T_{s_2}(T_{s_1} (f))(z) &= \frac{1}{\Gamma(s_2)} \int_0^\infty T_{s_1}(f)(e^{-t}z) t^{s_2-1} dt \\
&= \frac{ 1}{ \Gamma(s_2)} \int_0^\infty \left ( \frac{1}{ \Gamma(s_1} \int_0^\infty f(e^{-u} e^{-t}z)u^{s_1-1} du \right ) t^{s_2-1} dt \\
&= \frac{1}{ \Gamma (s_2) \Gamma(s_1)}\int  \int_{[0, \infty)^2} f(e^{-(u+t)}z) u^{s_1-1}t^{s_2-1} du \,dt.\\
\end{split} \end{equation*}
We set $t+u=v $,  $t-u=w$ and $\Delta= \{ (u,v): v \geq 0,\;  \vert w \vert  \leq v \}$ and obtain
\[
T_{s_2}(T_{s_1}(f))(z) = \frac{1}{2 \Gamma(s_2) \Gamma(s_1)} \int_0^\infty \left [ f(e^{-v}z) \int_{-v}^v \Big( \frac{ v-w}{2} \Big)^{s_1-1} \Big( \frac{ v+w}{2} \Big)^{s_2-1}  dw \right ]dv.
\]
The integral
\[
I = \int_{-v}^v \Big( \frac{ v-w}{2} \Big)^{s_1-1} \Big( \frac{ v+w}{2} \Big)^{s_2-1}  dw 
\]
is of Euler's type. We set $w= \rho v$ with $\vert \rho \vert \leq 1$ and obtain
\[
I = \frac{v^{s_1+s_2-1}}{2^{s_1+s_2-2}} \int_{-1}^1 (1-\rho^{s_1-1} (1+ \rho )^{s_2-1} d \rho.
\]
Now with $ x = 1+ \rho=2y$ we have
\begin{equation*} \begin{split}
\int_{-1}^1 (1-\rho^{s_1-1} (1+ \rho )^{s_2-1} d \rho & = \int_0^2 x^{s_2-1} (2-x)^{s_1-1} dx\\
&= 2^{s_1+s_2-1} \int_0^1 y^{s_2-1}(1-y)^{s_1-1} dy\\
&= 2^{s_1+s_2-1} \frac{ \Gamma(s_1) \Gamma(s_2)}{\Gamma(s_1+s_2)}  . \\
\end{split} \end{equation*}
That is
\[
I= 2 \frac{ \Gamma(s_1) \Gamma(s_2)}{\Gamma(s_1+s_2)} v^{s_1+s_2-1}
\]
and
\[
T_{s_2}(T_{s_1}(f))(z)= \frac{1}{\Gamma(s_1+s_2)} \int_0^\infty f(e^{-v}z)v^{s_1+s_2-1}dv= T_{s_1+s_2} (f)(z).
\]
The proposition is proved.\\ It is worth noting that this proof is contained in essence in \eqref{F-equation}, a representation of the Boole's differential operator
 $\displaystyle \vartheta= z\frac{d}{dz}$  acting on the $z$-variable, by a translation on the $s$-variable.
As a simple but illustrative example we take $U = D $ (or $\overline{D}\subset U$), $D$ being the unit disk. If $\displaystyle f(z) = \sum_{n \geq 1} a_n z^n,\, z\in D$, then 
\[T_s(f)(z) = \sum_{n \geq 1} \frac{a_n}{n^s} \] and \[T_{s_1+s_2} (f)(z)= \sum_{n \geq 1} \frac{a_n}{n^{s_1+s_2}} z^n\] is also equal to
\[T_{s_2}(T_{s_1}(f))(z)= \sum_{n \geq 1} \frac{1}{n_{s_2}} \big( \frac{a_n}{n^{s_1}} \big) z^n=   T_{s_1}(T_{s_2}(f))(z).\]

We can now finish the proof that the unit circle is a natural boundary for $ {\mathscr M}_s $.  If $ k>  {\Re} s $ is an integer, we write $ k = s + \sigma $ with $ \Re \sigma> 0 $. Assume that $ {\mathscr M}_s $ extends holomorphically  to an open set $ U $, containing $ D $, strictly larger than $D$. Without any loss of generality we can assume that $U$ is star-shiped with respect to the origin, then
\[{\mathscr M}_k = {\mathscr M}_ {s + \sigma} = T_ {s + \sigma} ({\mathscr M}_0) = 
T_{\sigma} ({\mathscr M}_s) \] extends to, which contradicts what have been said on the non holomorphic extendability of ${\mathscr M}_k(z)$.
  \section{conclusion}\label{lastsection}  
By way of conclusion we would like to come back to what was the motivation of this work, namely the Besicovitch question, and to mention that in fact it results from the identities of Kubert by means, in general, of a deep result of Number Theory as, for example, the Prime Number Theorem. We have also mentioned, very briefly, the occurrence of the Perron-Frobenius operator and the interpretation that can be drawn from it on the identities of Kubert. Our aim of this section is to point out the interest that there would be to put together  Number Theory, Harmonic Analysis and  Dynamical Systems in the study of arithmetic functions. The two twin functions of M\"{o}bius $\mu(n)$ and of Liouville $\lambda(n)$ share so many of these properties. For example, with the function \eqref{kubert} we have, as showed by \cite{D} and used in \cite{Sebbar}, \cite{Marcel}
\begin{align*}
\sum_{n=1}^{\infty} \frac{\mu(n)}{n} \{nt\}&= -\frac{1}{\pi} \sin 2\pi x,\\
\sum_{n=1}^{\infty} \frac{\lambda(n)}{n} \{nt\}&= -\frac{1}{\pi} \sum_{n=1}^{\infty} \frac{\sin2\pi n^2x}{n^2}.
\end{align*}
The second equality makes a link with what Riemann gave as an example of a continuous non-differentiable  function.  It is natural to define, similarly to ${\mathscr M}_s (z)$, the function
 \[{\mathscr N}_s (z)= \sum_{n\geq 1} \frac{\lambda(n)}{n^s} z^n. \]
 One of the main ideas of this work can be formulated in the following theorem and its corollary
 \begin{theorem}{\label{Main}}
 Let
 \[ {\mathfrak m}i_s(\theta)= \sum_{n=1}^{\infty} \frac{\mu(n)}{n^s}e^{2i\pi n \theta} ,\quad  \quad {\mathfrak n}i_s(\theta)= \sum_{n=1}^{\infty} \frac{\lambda(n)}{n^s} e^{2i\pi n \theta},\quad \theta\in \BR.\]
 Then for every positive $k$ we have
\[ \sum_{h=1}^k {\mathfrak m}i_s(h/k)= \frac{\mu(k)}{k^{s-1}} \sum_{n=1}^{\infty}\frac{\mu(n)}{n^s},\quad \quad  \sum_{h=1}^k {\mathfrak n}i_s(h/k)= \frac{\lambda(k)}{k^{s-1}} \sum_{n=1}^{\infty}\frac{\lambda(n)}{n^s}.\]
 \end{theorem}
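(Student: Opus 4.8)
The plan is to prove both identities by one and the same \emph{periodization} argument, doing the M\"obius series first and the Liouville series afterwards, the only structural difference being which multiplicativity property of the coefficient sequence is invoked. Fix a positive integer $k$. First I would write
\[
\sum_{h=1}^{k}\mathfrak{m}i_s(h/k)=\sum_{h=1}^{k}\sum_{n\ge1}\frac{\mu(n)}{n^s}e^{2i\pi nh/k}
=\sum_{n\ge1}\frac{\mu(n)}{n^s}\Bigl(\sum_{h=1}^{k}e^{2i\pi nh/k}\Bigr),
\]
the interchange of the finite sum over $h$ with the series over $n$ being legitimate by absolute convergence when $\Re s>1$. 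By orthogonality of the $k$-th roots of unity the inner sum equals $k$ if $k\mid n$ and $0$ otherwise, so only the multiples of $k$ survive.

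Next I would reindex $n=jk$ with $j\ge1$, obtaining
\[
\sum_{h=1}^{k}\mathfrak{m}i_s(h/k)=k\sum_{j\ge1}\frac{\mu(jk)}{(jk)^s}=\frac{1}{k^{s-1}}\sum_{j\ge1}\frac{\mu(jk)}{j^s}.
\]
Now Lemma~\ref{subseries} evaluates the shifted Dirichlet series $\sum_{j\ge1}\mu(jk)/j^s$ in closed form; feeding its value in, together with $\sum_{n\ge1}\mu(n)/n^s=1/\zeta(s)$ and the factorisation of $\Phi_s$, yields the asserted expression $\tfrac{\mu(k)}{k^{s-1}}\sum_{n\ge1}\mu(n)/n^s$. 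For the Liouville series the first two steps are verbatim the same and give $\sum_{h=1}^{k}\mathfrak{n}i_s(h/k)=k^{1-s}\sum_{j\ge1}\lambda(jk)/j^s$; here, because $\lambda$ is completely multiplicative, $\lambda(jk)=\lambda(k)\lambda(j)$ for \emph{all} $j$ (no coprimality needed), so the inner series factors immediately as $\lambda(k)\sum_{j\ge1}\lambda(j)/j^s$ and the second identity drops out without any analogue of Lemma~\ref{subseries}.

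The only genuinely delicate point is the range of $s$: the statement is wanted at $s=1$ (that is what feeds the Besicovitch-type corollary), where the series defining $\mathfrak{m}i_s$ and $\mathfrak{n}i_s$ converge only conditionally, so the blunt Fubini interchange of the first step is not available. There I would instead invoke Davenport's estimate \eqref{Bateman} to obtain uniform convergence of $\sum_{n\ge1}\mu(n)z^n/n$ on $\{|z|\le1\}$, which makes the exchange of the finite $h$-sum with a uniformly convergent series legitimate; alternatively one proves the identity for $\Re s>1$ and passes to the limit $s\to1^{+}$ inside a Stolz angle via the Cahen--Jensen lemma (Lemma~\ref{Cahen}), using continuity of both sides up to $s=1$. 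For the $\lambda$-series the same is done with the analogue of \eqref{Bateman} for partial sums of $\lambda(n)z^n$, which follows from the prime number theorem exactly as Davenport's estimate does for $\mu$. I expect this convergence bookkeeping at the critical line --- and not the algebra, which is just periodization plus Lemma~\ref{subseries} or complete multiplicativity --- to be the main obstacle; for $\Re s>1$ the argument is essentially a two-line computation.
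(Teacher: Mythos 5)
Your argument is the same as the paper's: sum the geometric progression $\sum_{h=1}^{k}e^{2i\pi nh/k}$ to isolate the terms with $k\mid n$, reindex $n=jk$, and evaluate the resulting subseries --- via Lemma~\ref{subseries} for $\mu$, via complete multiplicativity for $\lambda$. The Liouville half is exactly right. For the M\"obius half, however, your claim that Lemma~\ref{subseries} ``yields the asserted expression'' does not survive the actual substitution: the lemma gives
\[
k\sum_{j\ge1}\frac{\mu(jk)}{(jk)^{s}}=k^{1-s}\sum_{j\ge1}\frac{\mu(jk)}{j^{s}}=\frac{k\,\mu(k)}{\Phi_s(k)\,\zeta(s)},
\]
whereas the theorem's displayed right-hand side is $\mu(k)k^{1-s}/\zeta(s)$; the two differ by the factor $k^{s}/\Phi_s(k)=\prod_{p\mid k}(1-p^{-s})^{-1}$, which is not $1$ for squarefree $k>1$. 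This is in fact a misprint in the theorem's statement rather than an error in your method: the paper's own computation produces $\frac{\mu(k)}{k^{s-1}}\sum_{(n,k)=1}\mu(n)n^{-s}$, i.e.\ the sum on the right must carry the coprimality restriction $(n,k)=1$ (equivalently, the $\Phi_s(k)$ must remain). You should state the corrected value rather than force agreement with the display; none of this affects the corollary, since either version tends to $0$ as $s\to1^{+}$. Your treatment of the endpoint $s=1$ (Davenport's estimate \eqref{Bateman}, or proving the identity for $\Re s>1$ and passing to the limit via Lemma~\ref{Cahen}) is exactly the mechanism the paper uses; only note that the uniform $\lambda$-analogue of \eqref{Bateman} is likewise a theorem of Davenport and not a formal consequence of the prime number theorem alone, although at the rational points $h/k$ needed here the limit argument from $\Re s>1$ already suffices.
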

 As we have seen this results from the following facts
\[ {\mathfrak m}i_s(h/k)=  \sum_{n=1}^{\infty}\frac{\mu(n)}{n^s}  \sum_{h=1}^k e^{2iphn/k}= \frac{\mu(k)}{k^{s-1}} \sum_{n=1, (n, k)=1}^{\infty} \frac{\mu(n)}{n^s}.\]
and 
\[\sum_{n=1, \,(n,k)=1}^{\infty} \frac{\mu(n)}{n}= \lim_{s\to 1^{+}}\sum_{n=1,\, (n, k)}^{\infty} \frac{\mu(n)}{n^s}\]
\[= \lim_{s\to 1^+} \prod_{p\; \nmid\;   k}(1-p^{-s})= \lim_{s\to 1^+} \left\{ \zeta(s)   \prod_{p\vert k}(1-p^{-s})     \right \}^{-1}= 0.\]
and, similarly for $\Re s>1  $
\[\sum_{n=1}^{\infty}\frac{\lambda(n)}{n^s}=          \frac{\zeta(2s)}{\zeta(s)},\]
\begin{equation}\label{lambda}
\sum_{n=1}^{\infty}\frac{\lambda(n)}{n}=  \lim_{s\to 1^+} \frac{\zeta(2s)}{\zeta(s)}= 0.
\end{equation}
\begin{corollary} The functions 
$ \Re{\mathfrak m}i_s(\theta),     \Re{\mathfrak n}i_s(\theta)  $                  
are non-trivial real-valued continuous functions 
$f$ on the real line which have period unity, are even, and for every positive integer $k$ have the property     \[ \sum_{h=1}^n f(h/k)=0.\] 
\end{corollary}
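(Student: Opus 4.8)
The plan is to read the Corollary off Theorem~\ref{Main} at the value $s=1$. At $s=1$ the identity of Theorem~\ref{Main} reads $\sum_{h=1}^{k}{\mathfrak m}i_1(h/k)=\mu(k)\sum_{n\ge1}\frac{\mu(n)}{n}$, which vanishes for every $k\ge1$ since $\sum_{n\ge1}\frac{\mu(n)}{n}=0$ (and $s=1$ is the only value for which all these sums vanish, because for $\Re s>1$ the right-hand side equals $\mu(k)/(k^{s-1}\zeta(s))\neq0$ for squarefree $k$). So the functions in the Corollary are $f(\theta)=\Re{\mathfrak m}i_1(\theta)=\sum_{n\ge1}\frac{\mu(n)}{n}\cos(2\pi n\theta)$ and $g(\theta)=\Re{\mathfrak n}i_1(\theta)=\sum_{n\ge1}\frac{\lambda(n)}{n}\cos(2\pi n\theta)$, i.e. exactly the Bateman--Chowla functions $f_1,f_2$ recalled in the introduction. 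It remains to verify that each of these series defines a continuous function on $\mathbb R$, that it is $1$-periodic and even, and that it is not identically zero.

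Continuity of $f$ is already in hand: by Davenport's estimate \eqref{Bateman} together with partial summation, the series $\sum_{n\ge1}\frac{\mu(n)}{n}e^{2i\pi n\theta}$ converges uniformly on $\mathbb R$, equivalently (by the maximum principle) on the closed unit disc, so ${\mathfrak m}i_1$, hence $f=\Re{\mathfrak m}i_1$, is continuous; the same argument with the companion estimate of Davenport \cite{D} for the Liouville function gives continuity of $g$. Periodicity of period $1$ is immediate from $e^{2i\pi n(\theta+1)}=e^{2i\pi n\theta}$, and evenness is clear since, the coefficients $\mu(n)/n$ and $\lambda(n)/n$ being real, $f$ and $g$ are cosine series. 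Non-triviality follows from uniqueness of Fourier coefficients: the cosine coefficients of $f$ are $\mu(n)/n$, so $f\equiv0$ would force $\mu(n)=0$ for every $n$, contradicting $\mu(1)=1$; likewise $\lambda(1)=1$ gives $g\not\equiv0$. (Note that an even $1$-periodic function satisfies $f(\tfrac12-t)=f(t-\tfrac12)=f(t+\tfrac12)$, hence is symmetric about $\tfrac12$; being nonzero, it is in particular not odd about $\tfrac12$, which is the feature Besicovitch required.)

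For the summation identity, taking real parts in the $s=1$ case of Theorem~\ref{Main} gives $\sum_{h=1}^{k}f(h/k)=\Re\sum_{h=1}^{k}{\mathfrak m}i_1(h/k)=0$ for every $k\ge1$, and the same computation with $\lambda$ in place of $\mu$, using $\sum_{n\ge1}\frac{\lambda(n)}{n}=0$ from \eqref{lambda}, gives $\sum_{h=1}^{k}g(h/k)=0$; since moreover $f(0)=g(0)=0$, the $\sum_{h=0}^{k}$ form of Besicovitch's condition holds equally. The one genuinely delicate point, and the main obstacle, is the justification---at the critical exponent $s=1$, where the series no longer converges absolutely---of the rearrangement $\sum_{h=1}^{k}\sum_{n\ge1}=\sum_{n\ge1}\sum_{h=1}^{k}$ that underlies Theorem~\ref{Main}; this is exactly what the uniform convergence established above provides, since a finite sum of uniformly convergent series may be summed term by term, after which $\sum_{h=1}^{k}e^{2i\pi nh/k}$ collapses to $k$ when $k\mid n$ and to $0$ otherwise, reducing $\sum_{h=1}^{k}{\mathfrak m}i_1(h/k)$ to $k\sum_{k\mid n}\frac{\mu(n)}{n}=\mu(k)\sum_{(m,k)=1}\frac{\mu(m)}{m}=0$.
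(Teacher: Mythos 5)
Your proof is correct and follows essentially the same route as the paper: both reduce $\sum_{h=1}^{k}f(h/k)$ via the orthogonality of the roots of unity to $\mu(k)\sum_{(n,k)=1}\mu(n)/n$ (resp.\ the analogous Liouville sum), which vanishes by the Prime Number Theorem, and both ultimately rest on Davenport's estimate \eqref{Bateman} for the behaviour of the series on the unit circle. The only difference is one of bookkeeping: the paper proves the identity for $\Re s>1$ and passes to the limit $s\to 1^{+}$ via the Cahen--Jensen lemma, whereas you work directly at $s=1$, where interchanging the finite sum over $h$ with the series over $n$ needs only the convergence of each ${\mathfrak m}i_1(h/k)$ (uniform convergence, which Davenport supplies, is what gives continuity); the additional verifications you record (evenness, periodicity, non-triviality via uniqueness of Fourier coefficients) are left implicit in the paper.
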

 Furthermore, one can prove directly that $\displaystyle  \sum(-1)^n \frac{\lambda(n)}{n}= 0  $. Indeed
\[ \sum_{n=1}^{\infty}(-1)^n \frac{ \lambda(n)}{n^s} + \sum_{n=1}^{\infty} \frac{\lambda(n)}{n^s} = 2\sum_{n=1}^{\infty} \frac{\lambda(2n)}{(2n)^s}.\]
Since $\lambda(2)=-1$ and  $  \lambda$ is multiplicative we obtain
\[\frac{\lambda(2n)}{(2n)^s} = -\frac{1}{2^s}  \frac{\lambda(n)}{n^s}\]
 so that
\[ \sum_{n=1}^{\infty} (-1)^n\frac{\lambda(n)}{n^s} = (-1-\frac{2}{2^s}) \sum_{n=1}^{\infty} \frac{\lambda(n)}{n^s },\]
or
\[ \sum_{n=1}^{\infty} (-1)^{(n+1)}\frac{\lambda(n)}{n^s} = (1+\frac{2}{2^s}) \frac{\zeta(2s)}{\zeta(s)}.\] 
We conclude by taking the limit $s\to 1$ as in \eqref{lambda}.\\
 It is generally believed that the values of the M\"{o}bius and  Liouville functions enjoy various randomness properties. One manifestation of this principle is an old conjecture of
Chowla \cite{Chowla}  asserting that for all $l\in \BN  $ and all distinct $n_1, n_2,\cdots,n_l\in \BN     $ and  for every $\displaystyle \epsilon_1,\epsilon_2,\cdots,\epsilon_l \in\{1,2 \}                           $
we have
\begin{align*}
\sum_{m=1}^M \mu^{\epsilon_1}(m+n_1)\cdots  \mu^{\epsilon_l}(m+n_l)&= o(M)\\
\sum_{m=1}^M \lambda^{\epsilon_1}(m+n_1)\cdots  \lambda^{\epsilon_l}(m+n_l)&= o(M).
\end{align*}
According to P. Sarnak we say that a sequence $\displaystyle a(n)_{n\geq}1$  is  deterministic if there exists a topological dynamical system (X, T) with zero topological entropy, a point $x\in X$, and a continuous function $f : X \to \C$ such that for all $n\geq 1$,
$a(n)= f(T^n(x))$.  Sarnak's conjecture  states that for every deterministic sequence   $a(n)_{n\geq}1$ we have
\[ \sum_{m=1}^M \mu(n) a(n)= o(M).          \]
The case of $X$ is a point corresponds to the estimate $\displaystyle \sum_{m=1}^M \mu(n)= o(M)$, an equivalent form of the Prime Number Theorem. When $X= \R/\Z$ and 
$T(x)= x+\alpha \,{\rm(modulo \,1)}$, Sarnak's conjecture results from Davenports's estimate. We refer to \cite{Cellarosi}  for an extended report on these 
innovative ideas.

\newpage
\begin{figure}[h]
\includegraphics[width=\textwidth]{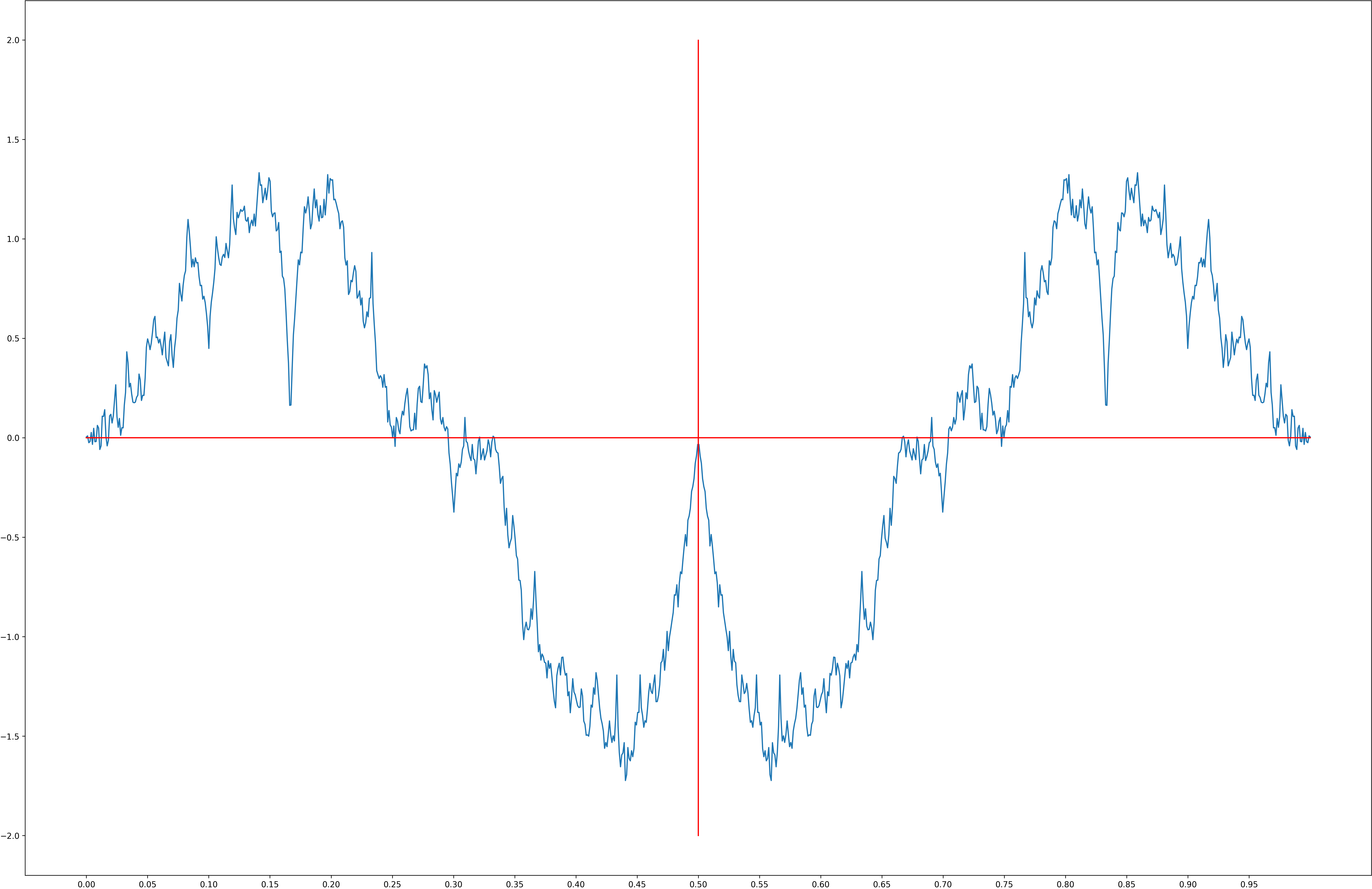}
\caption{Graph of $\sum \frac{\mu(n)}{n} \cos(2\pi nt)$}
\label{fig:mu}
\end{figure}

\newpage
\begin{figure}[h]
\includegraphics[width=\textwidth]{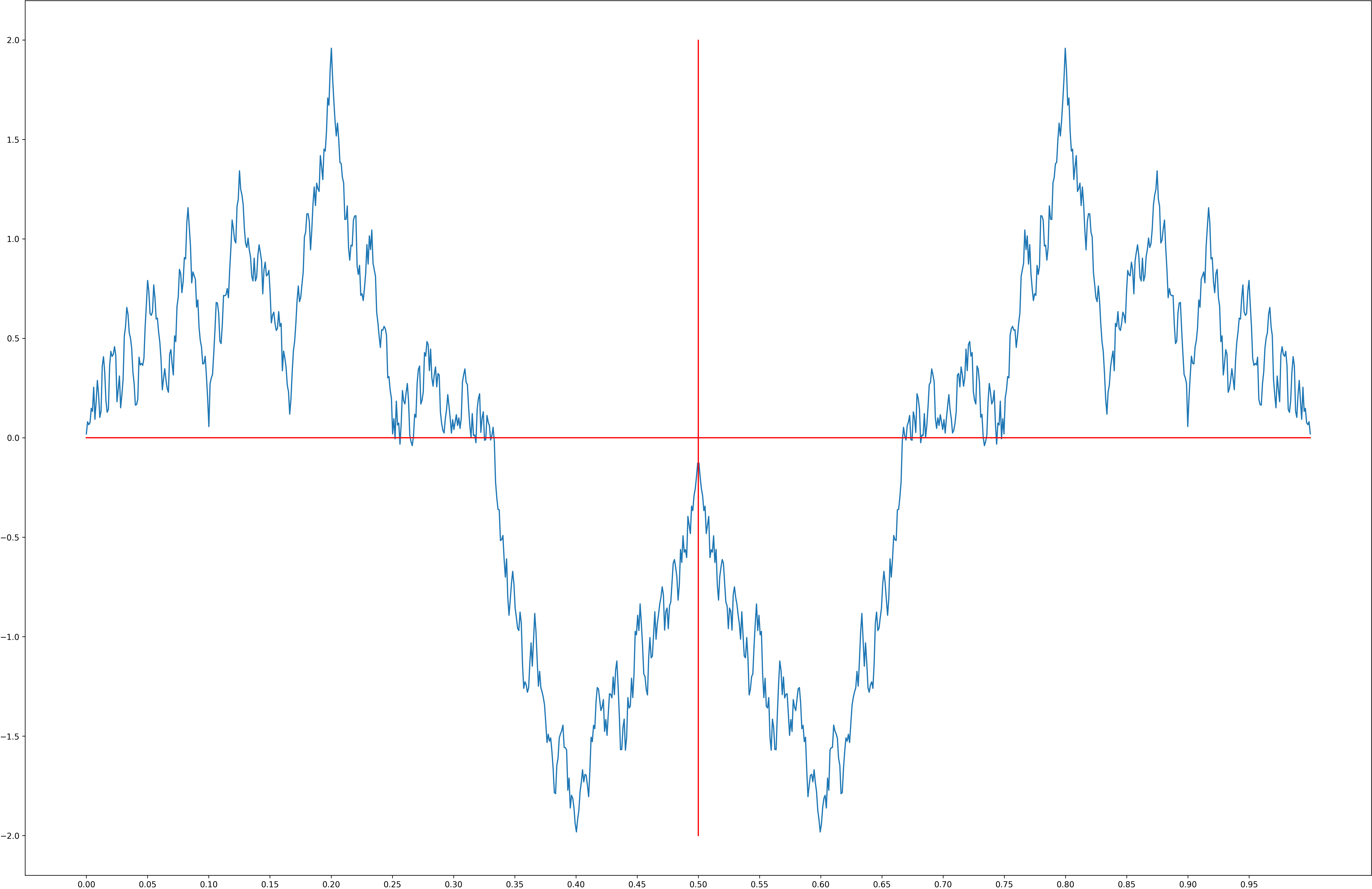}
\caption{Graph of $\sum \frac{\lambda(n)}{n} \cos(2\pi nt)$}
\label{fig:lambda}
\end{figure}


\begin{thebibliography}{100}
\bibitem{Andrica}
D. Andrica, M. Piticari, {\em On some extensions of Jordan's arithmetical functions}, Acta universitatis Apulensis (7) (2004) 13-22,
\bibitem{Babenko}
K.I. Babenko, {\em On conjugate functions}, Dokl. Akad. Nauk SSSR 62 (1948), 157-160.

\bibitem{BS} M. Balazard and E. Saias, {\em The Nyman-Beurling equivalent form for the Riemann hypothesis.}  Expositiones Mathematicae 18
(2000), p. 131-138.


\bibitem{Bari}
 N.K. Bari, {\em Biorthogonal systems and bases in Hilbert space}. Moskov. Gos. Univ. U\u{c}enye Zapiski Matematika, 148(4):69-107,1951.
 
\bibitem{BC} P.T. Bateman,  S. Chowla, {\em  Some special trigonometric series related to the distribution of prime numbers}.
J. London Math. Soc. 38 (1963), 372-374.

\bibitem{Bayart} F. Bayard et A. Mouze, {\em Division et composition dans l'anneau des s\'eries de Dirichlet}. Ann. Inst. Fourier 53 (2003) (7) 2039-2060.

\bibitem{BF} H. Bercovici and C. Foias, {\em A real variable restatement of Riemann's hypothesis}, Israel J. of Maths 48 (1984) 57-48

\bibitem{Besico}
A.S. Besicovitch, {\em Problem on continuity}, J. London Math. Soc.36 (1961), 388-392.

\bibitem{B} A. Beurling, {\em A closure problem related to the Riemann zeta-function}. Proc. Nat. Acad. Sci. U.S.A. 41 (1955), 312-314.

\bibitem{J.B} J. Bremont, {\em Davenport series and almost-sure convergence}. Q. J. Math. 62 (2011) 825-843.

\bibitem{Breteche}
R. de la Bret\`eche, G. Tenenbaum, {\em S\'eries trigonom\'etriques \`a coefficients arithm\'etiques},
J. Anal. Math. 92 (2004), 1-79.


\bibitem{Brown}
T.C. Brown, L.C. Hsu, J. Wang, P.J.-S. Shiue, {\em On certain kinds of
generalized number-theoretical M\"{o}bius function}, Math. Sci. 25 (2000), 72-77.


\bibitem{Carleson}
L. Carleson, {\em On convergence and growth of partial sums of Fourier series}. Acta Math. 116 (1966), 135 -157.

\bibitem{Cellarosi} 
F. Cellarosi, {\em Ya. G. Sinai's Work on Number Theory}. The Abel Prize 2013-2017, 2019, Springer.

\bibitem{Chowla}
S. Chowla, {\em The Riemann Hypothesis and Hilbert's  Tenth Problem}. Mathematics and Its Applications 4, Gordon and Breach Science Publishers, New York, 1965.
\bibitem{Chowla2}
S. Chowla, {\em On some infinite series involving arithmetical functions (II)}. Proc. Indian Acad. Sci. 5 (1937), 514-516.

\bibitem{D} H. Davenport, {\em On some infinite series involving arithmetical functions I and II. Quart}. J. Math, vol 48 (1937), 8-13 and 313-320.

\bibitem{Delange}
H. Delange, {\em  On Ramanujan expansions of certain arithmetical functions, Acta Arithmetica}, 31 (1976), 259-270.

\bibitem{Dickson}
 L. E. Dickson, {\em History of the Theory of Numbers}, Vol. II, Chelsea, New York, 1952.
 
 \bibitem{Marcel}
 M. Grang\'e, {\em Special Periodic Even Functions (Les Fonctions P\'eriodiques Paires Sp\'eciales).} 
Moroccan Journal of Pure and Applied Analysis, Volume 4(1), (2018),17-32.


\bibitem{Hartman}
P. Hartman, {\em Multiplicative sequences and t\"{o}plerian $L^2$-bases}. Duke Math. J.14, (1947), 755-767. 


\bibitem{HR}  G.H. Hardy, M. Riesz, {\em The general theory of Dirichlet's series}. Hafner Publishing Company, 1972.

\bibitem{Hata} 
M. Hata, {\em Fractals in mathematics}. Studies in Mathematics and Its Applications 18, 259-278.

\bibitem{HLS} H. Hedenmalm, P. Lindqvist, K. Seip, {\em A Hilbert space of Dirichlet  series and systems of dilated functions in $L^2(0,1)$}, Duke 
Math.J. 86 (1) (1997) 1-37.

\bibitem{Higher}
A. Erd\'elyi, W. Magnus, F. Oberhettinger, and F.G. Tricomi,{ \em Higher Transcendental Functions}, Vol.1, McGraw-Hill, New York, Toronto and London,1953.

\bibitem{higher}
A. Erd\'elyi, W. Magnus, F. Oberhettinger, and  F.G. Tricomi, {\em Tables  of  Integral  Transforms},  Vol.1, McGraw-Hill, New York, 1954. 

\bibitem{Holder}
O. H\"{o}lder, {\em Zur Theorie der Kreisteilungsgleichung  $Km(x) = 0$}, Prace Matematyczno Fizyczne, 43 (1936), 13-23.

\bibitem{G.J} G. Julia, {\em Sur un d\'eveloppement des fonctions holomorphes}. Acta. Math. 54 (1930) 263-295.

\bibitem{Jaffard}
S. Jaffard, {\em On Davenport expansions, Fractal Geometry and Applications}: A Jubilee of Benoit Mandelbrot. Part 1, Proceedings of Symposia in Pure Mathematics 72, Part 1, American Mathematical Society, Providence, RI, 2004, 273-303.

\bibitem{Korevaar}
J. Korevaar, {\em Tauberian theory}, Grundlehren der Mathematischen Wissenschaften [Fundamental Principles of Mathematical Sciences], vol. 329, 
Springer-Verlag, Berlin, 2004, A century of developments.

\bibitem{Kubert}
D. Kubert, {\em The universal ordinary distribution}. Bull. Soc.France 107 (1979), 179-202.

\bibitem{LINDSEIP}
P. Lindqvist, K. Seip, {\em Note on some greatest common divisor matrices}, Acta Arith. 84
(1998) 149-154.

\bibitem{Lucht} L.G.  Lucht, {\em Ramanujan expansions revisited}. Arch. Math., Vol. 64, 121-128 (1995)

\bibitem{McCarthy}
P.J. McCarthy, {\em Introduction to Arithmetical  Functions}, Springer-Verlag, NewYork,1986.

\bibitem{Mikolas}
M. Mikol\'{a}s, {\em Integral formulae of arithmetical characteristics relating to the zeta-function of Hurwitz}, Publ.
Math. Debrecen 5 (1957), 44-53.

\bibitem{Milnor}
J. Milnor, {\em On polylogarithms, Hurwitz zeta functions, and the Kubert identities}, Enseign. Math. 29 (1983), 281-322.

\bibitem{Nikolski}
N. Nikolski, {\em Hardy Spaces} (Cambridge Studies in Advanced Mathematics). Cambridge: Cambridge University Press.(2019).  

\bibitem{CHP} C.H. Picard, {\em On some series formed by values of the Riemann zeta function}. arXiv:1511.04720 (2015).

\bibitem{PP} P. Pollack, {\em Euler and the partial sums of the prime harmonics series}. Elem. Math. 70 (2015) 13-20.

\bibitem{H.Q} H. Queff\'elec, {\em Espaces de s\'eries de Dirichlet et leur op\'erateurs de composition}, Annales Math\'ematiques Blaise Pascal 22 (2015) 
267-344.

\bibitem{Ramanujan}
S. Ramanujan, {\em On certain trigonometrical sums and their applications in the theory of numbers}, Trans. Cambridge Phil. Soc, 22 (1918), 259-276.

\bibitem{Romanov}
 N.P. Romanoff, {\em Hilbert space and the theory of numbers}, Izv. Akad. Nauk SSSR Ser. Mat., 1946, Volume 10, Issue 1, 3-34
 
 \bibitem{Sebbar}
 A. Sebbar, {\em Harmonic numbers, harmonic series and zeta function}. Moroccan Journal of Pure and Applied Analysis, Volume 4(2), (2018), 122-157.
 
 \bibitem{SebGay}
 A. Sebbar, R. Gay, {\em  Arithmetic and Analysis of the series $\displaystyle {  \sum_{n=1}^{\infty} \frac{1}{n}  \sin  \frac{x}{n} }$}, submitted.
 
 \bibitem{Singer}
I. Singer, {\em Bases in Banach spaces I}, Springer Verlag, Berlin and New York, 1970.

\bibitem{Smith}
H.J.S. Smith, {\em On the value of a certain arithmetical determinant}, Proc. London Math. Soc, 7 (1875-6), 208-212.

\bibitem{Spira} R. Spira, {\em General Lambert Series}, Portugaliae Mathematica Vol 29, Fasc. 2 (1970) 95-99.

\bibitem{Thajoddin}
S. Thajoddin, S. Vangipuram, {\em  A note on Jordan's totient function}.  Indian J. Pure Appl. Math., 19(12) (1988)1156-161.


\bibitem{Wintner} A.Wintner, {\em Diophantine approximation and Hilbert's space}, Amer. J. Math. 66 (1944), 564-578.



\end{thebibliography}
\end{document}